\newcounter{contador}
\newcounter{teoA}
\newtheorem{teoa}[teoA]{Theorem}
\newtheorem{propo}[contador]{Proposition}
\newtheorem{teo}[contador]{Theorem}
\newtheorem{lem}[contador]{Lemma}
\newtheorem{defi}[contador]{Definition}
\theoremstyle{remark}
\newcounter{ex}
\newcommand{\R}{{\mathbb R}}
\newcommand{\N}{{\mathbb N}}
\newcommand{\Q}{{\mathbb Q}}
\title{Entropy and $\omega$-limit sets on invariant graphs for a class of planar piecewise linear maps}
\author{Anna Cima$^{(1)}$, Armengol Gasull$^{(1)}$, V\'{\i}ctor Ma\~{n}osa$^{(2)}$ and Francesc Ma\~{n}osas$^{(1)}$
    \\*[.1truecm]
    {\small \textsl{$^{(1)}$ Departament de Matem\`{a}tiques, Facultat
            de Ci\`{e}ncies,}}
    \\*[-.25truecm] {\small \textsl{Universitat Aut\`{o}noma de Barcelona,}}
    \\*[-.25truecm] {\small \textsl{08193 Bellaterra, Barcelona,
    Spain}}
    \\*[-.25truecm] {\small \textsl{anna.cima@uab.cat,
            armengol.gasull@uab.cat, francesc.manosas@uab.cat}}\\
    \\*[-.25truecm] {\small \textsl{$^{(2)}$ Departament de Matem\`{a}tiques,}}
     \\*[-.25truecm] {\small \textsl{Institut de Matem\`{a}tiques de la UPC-BarcelonaTech (IMTech),}}           
    \\*[-.25truecm] {\small \textsl{Universitat Polit\`{e}cnica de Catalunya}}
    \\*[-.25truecm] {\small \textsl{Colom 11, 08222 Terrassa, Spain}}
    \\*[-.25truecm] {\small \textsl{victor.manosa@upc.edu}}}
\date{\today}
\begin{document}

% ********************** EN CAS D'ARTICLE *********************
\maketitle

\begin{abstract}  We consider 
the family of piecewise linear maps
$F(x,y)=\left(|x| - y + a, x - |y| + b\right),$
where $(a,b)\in \R^2$.  In previous work, we identified that certain maps of this class possess one-dimensional invariant sets, planar graphs, that capture the global dynamics of the system. Within these graphs, chaotic dynamics emerge for certain parameter values, leading to an intermediate dynamical regime between regular behavior and full-plane chaos. In the present study, we revisit this family and analyze in detail the topological entropy as a function of a bifurcation parameter, finding that transitions from positive to zero entropy are continuous for certain parameter values and discontinuous for others. We also provide a methodology for determining arbitrarily sharp rational bounds for the bifurcation values at which this transition occurs. Finally, motivated by the limitations of numerical simulations in detecting the complex dynamics within these graphs, we prove that for some parameter values, there exists a full-measure set in these graphs where orbits converge to at most three omega-limit sets, which, when the parameter values are rational, correspond to periodic orbits.
\end{abstract}

\noindent {\sl  Mathematics Subject Classification:} 37C05, 37E25, 37B40, 39A23. 

{\noindent {\sl Keywords:} Continuous piecewise linear map, invariant graph, Markov partition, topological entropy.}

\newpage

\section{Introduction and main results}

Planar continuous piecewise linear maps have long served as paradigmatic models in nonlinear dynamics. Since the foundational works of Lozi and Devaney \cite{D84,L78}, systematic studies of their dynamics and of bifurcations in such systems, and more generally in piecewise smooth maps, were initiated, for instance,  through the theory of border-collision bifurcations developed by Nusse and Yorke for one-dimensional maps \cite{NY92,NY95}. Since then, chaotic dynamics and bifurcation structures in two-dimensional piecewise linear maps have been widely investigated; the literature over the last decades is vast, to cite just a few examples see \cite{BG99,GSA,GT20,GMS,TG20}.

In our previous work~\cite{CGMM}, we observed that there exist non-trivial continuous piecewise linear maps that possess one-dimensional invariant sets--specifically, planar graphs, that is, a finite set of points (vertices) connected by a finite set segments (edges), each joining some pairs of them, that capture the global dynamics in the plane; we proved the existence of these graphs and also that for certain parameter values, chaotic dynamics arise on these graphs, producing an intermediate dynamical regime between regular behavior and fully developed chaos in the plane. Of course, trivial examples can be obtained by embedding in the plane a one-dimensional map whose global dynamics is contained in an interval, but these examples do not truly exhibit two-dimensional dynamics.

One-dimensional attractors consisting of line segments in continuous piecewise-linear maps have been observed numerically in the study of border-collision bifurcations since at least the work of Parui and Banerjee \cite{PB}. When the first version of this manuscript was prepared, one of the anonymous reviewers drew our attention to the paper by Kowalczyk \cite{Kow}. We consider this a relevant contribution. This reference studies border-collision bifurcations in non-invertible piecewise-linear planar maps, which serve as normal forms for grazing-sliding bifurcations in Filippov-type systems. The author derives, analytically,  conditions under which the considered maps exhibit the sudden onset of robust chaotic attractors whose topological structure consists of two or three distinct continuous line segments forming \emph{a tree}; see Propositions 1 and 3 of \cite{Kow}.

The family of piecewise linear maps introduced in  our previous work, and which we also consider in the present paper, is given by
\begin{equation}\label{e:F}
F_{a,b}(x,y)=\left(|x| - y + a, x - |y| + b\right),
\end{equation}
where $(a,b)\in \R^2$.  
 In what follows, whenever there is no risk of confusion, we will use $F$ instead of $F_{a,b}$ to denote the map. In the last years, some works have appeared that analyze different particular cases of the family $F$, motivated by the fact that this family intersects the class of maps introduced by Grove and Ladas in \cite{GL} with the aim of generalizing the  Lozi map, see for example \cite{ABK21,BS21, BK,TLL13, TLS17}. These works characterize particular cases in which  every orbit  converges to a fixed point, to a periodic orbit, or it is pre-periodic (that is, their points reach a periodic orbit in a finite number of iterations). Our initial motivation for studying this family was precisely to analyze the structure of these periodic and preperiodic orbits (their set of periods, the possible coexistence, the shape of their basins of attraction when coexistence occurs...)
However, we found that the global dynamics of the family  $F$ is remarkably more complex. Although the family of maps $F$ does not belong to the class of Lozi-like maps in the sense of Misiurewicz and \v{S}timac \cite{MS1,MS2}, we addressed similar questions to those considered by these authors, such as how topological entropy and periodic points depend on the parameters, or the nature of the attractors, to mention only a few.

In this study, we revisit this family and analyze in detail the topological entropy as a function of a bifurcation parameter, finding that transitions from positive to zero entropy occur continuously-whereas, we previously found that transitions from zero to positive entropy are discontinuous. We also provide a methodology for determining arbitrarily sharp rational bounds for the bifurcation values at which this transition occurs. Finally, motivated by the limitations of numerical simulations in detecting the complex dynamics within these graphs, we prove that for some parameter values, there exists a full-measure set in these graphs where orbits converge to at most three omega-limit sets, which, when the parameter values are rational, correspond to periodic orbits. 

In the following subsections, we first revisit the main results from~\cite{CGMM}, included here for completeness, and then present the  contributions of this paper.

\subsection{Dynamics of the family \eqref{e:F}}

In the paper \cite{CGMM}, we presented a study  of the family \eqref{e:F}. 
For completeness, we reproduce the detailed statements of the main results from the referenced work (Theorems~\ref{t:teoA}--\ref{t:teoD}, below).  We include this section to provide the reader with the context in which the results presented in this work are framed. As shown in Section \ref{s:s2}, the family $F_{a,b}$ is essentially one-parameter, since the re-scaling \eqref{conj} allows the parameter $a$ to be normalized. Thus, the cases $a>0$ and $a<0$ reduce to $a=1$ and $a=-1$, respectively, whereas the case $a=0$ must be treated separately. Note, however, that the cases $a=1$ and $a=-1$ are not conjugate.

The first result characterizes completely the  dynamics of $F$   when $a\ge0.$

\begin{teoa}\label{t:teoA}
	If $a\ge 0,$  for each $\mathbf{x}\in\R^2$   there exists $n\ge 0,$ that may depend on $\mathbf{x},$ such that $F^n(\mathbf{x})\in \operatorname{Per}(F),$ \emph{the set  of all periodic points of $F$}. Moreover, the set $\operatorname{Per}(F)$ is formed by a fixed point and, depending on $a$ and $b,$ either two or none 3 periodic orbits.
	\end{teoa}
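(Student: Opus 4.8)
The plan is to analyze the map $F(x,y)=(|x|-y+a, x-|y|+b)$ for $a\ge 0$ by splitting the plane into the four quadrants determined by the signs of $x$ and $y$, since on each quadrant $F$ is an affine map and the composition behavior can be controlled. The key structural observation I would exploit is that the $x$-component involves $|x|$ with a nonnegative additive constant $a$, so after a bounded number of iterations the orbit should enter a region where the signs stabilize. Specifically, I would first study what happens to the sign of the first coordinate: since the new first coordinate is $|x|-y+a$, iterating tends to push orbits into a controlled absorbing region, and the hypothesis $a\ge 0$ is what guarantees no escape to infinity in the problematic direction. The main goal is to show every orbit is eventually periodic, so I would look for a compact absorbing set and then finish with a pigeonhole/finiteness argument on the finitely many affine branches.

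First I would establish that there is a compact region $K\subset\R^2$ that is forward invariant and absorbing: every orbit enters $K$ after finitely many steps. To do this I would track a Lyapunov-type quantity, for instance $\max(|x|,|y|)$ or a weighted linear combination, and show it is non-increasing once the orbit is large and strictly decreases outside $K$; the sign $a\ge 0$ should be exactly what makes this estimate work. Second, once inside the absorbing set, I would use the fact that $F$ is piecewise affine with only four affine pieces (one per quadrant), each of which is a contraction, an isometry, or at worst volume-preserving in a way compatible with the dynamics. On each quadrant the linear part is $\begin{pmatrix}\pm 1 & -1\\ 1 & \mp 1\end{pmatrix}$, whose eigenvalue structure I would compute: in each case the relevant eigenvalues are roots of unity or have modulus one, which forces the affine return map to be quasi-periodic or periodic rather than chaotic, pinning down the eventual periodicity.

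The precise route to the finiteness conclusion is to show that the itinerary of quadrant-symbols becomes eventually periodic and that, on the corresponding composition of affine maps, the orbit lands on a genuine periodic point. I would analyze the four linear parts: the matrices $\left(\begin{smallmatrix}1&-1\\1&-1\end{smallmatrix}\right)$, $\left(\begin{smallmatrix}1&-1\\1&1\end{smallmatrix}\right)$, $\left(\begin{smallmatrix}-1&-1\\1&-1\end{smallmatrix}\right)$, $\left(\begin{smallmatrix}-1&-1\\1&1\end{smallmatrix}\right)$, computing their eigenvalues ($0$, $1\pm i$, and the like) to see that no branch has an expanding direction with modulus exceeding what the absorbing estimate controls. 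Identifying the fixed point and the candidate $3$-periodic orbits amounts to solving, on each admissible quadrant pattern, the affine fixed-point equations $F(\mathbf{x})=\mathbf{x}$ and $F^3(\mathbf{x})=\mathbf{x}$, and then checking which solutions actually lie in the quadrants their itinerary requires; the count (always a fixed point, plus either two or zero $3$-cycles) should emerge from a case analysis in $(a,b)$ governed by sign conditions.

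The hard part will be the bookkeeping that rules out chaos: one must prove that the itinerary necessarily stabilizes rather than wandering among branches indefinitely, and that the branch with a degenerate eigenvalue $0$ (which collapses a direction) cooperates to force periodicity rather than merely boundedness. In other words, boundedness alone does not give eventual periodicity for a general piecewise isometry, so the delicate step is to leverage the specific eigenstructure (the presence of the zero eigenvalue and the resonant $\pm 1$, $1\pm i$ eigenvalues) to show the dynamics inside $K$ collapses onto a finite set after finitely many iterates. I expect this eigenvalue/itinerary analysis, combined with verifying admissibility of the periodic solutions against the quadrant sign constraints, to be the crux of the argument; everything else reduces to the routine affine computations on the four pieces.
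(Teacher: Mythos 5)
Note first that this paper contains no proof of this statement: Theorem A is reproduced verbatim from the authors' previous work \cite{CGMM}, so your attempt can only be judged on its own merits, and it has a fatal flaw. The central mechanism you propose is wrong. Of the four linear parts, only those on $Q_1$ and $Q_3$, namely $\left(\begin{smallmatrix}1&-1\\1&-1\end{smallmatrix}\right)$ and $\left(\begin{smallmatrix}-1&-1\\1&1\end{smallmatrix}\right)$, are degenerate (both nilpotent, trace and determinant zero, collapsing the quadrant onto a line of slope $\pm1$). The other two, $\left(\begin{smallmatrix}-1&-1\\1&-1\end{smallmatrix}\right)$ on $Q_2$ and $\left(\begin{smallmatrix}1&-1\\1&1\end{smallmatrix}\right)$ on $Q_4$, have eigenvalues $-1\pm i$ and $1\pm i$, of modulus $\sqrt{2}$ and determinant $2$: these branches uniformly expand every vector by the factor $\sqrt{2}$ and double areas. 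So your claims that each piece is ``a contraction, an isometry, or at worst volume-preserving'' and that ``the relevant eigenvalues are roots of unity or have modulus one, which forces the affine return map to be quasi-periodic or periodic'' are false, and indeed inconsistent with your own listing of $1\pm i$ among the eigenvalues. More decisively, no argument resting only on the eigenstructure of the four matrices can succeed: the same four linear parts govern the case $a<0$, where (Theorem D of this paper) the map has positive topological entropy and is chaotic in the sense of Li and Yorke on its invariant graph. The hypothesis $a\ge 0$ must therefore enter through the combinatorics of which quadrants the images of the quadrants can meet (the admissible itineraries), not through the linear algebra, so this step is not merely incomplete but unrepairable in the form you give it.

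Beyond this, the two assertions that constitute the theorem are left as hopes rather than proofs. First, eventual periodicity means every orbit lands \emph{exactly} on a periodic point after finitely many steps; you correctly observe that boundedness plus finitely many branches cannot give this (pigeonhole fails on infinite sets), but you then defer precisely this point (``I expect this \dots to be the crux''). The mechanism that actually delivers finite-time landing is the one you only glance at: $F$ maps all of $Q_1$ into the line $\{v=u+b-a\}$ and all of $Q_3$ into the line $\{u+v=a+b\}$, so after a bounded number of steps every orbit lies in an explicit one-dimensional set, on which a concrete case analysis in $(a,b)$ — this is where $a\ge 0$ is used — shows that orbits reach the fixed point or a $3$-cycle in finitely many iterations. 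Second, for the ``moreover'' part, solving $F(\mathbf{x})=\mathbf{x}$ and $F^3(\mathbf{x})=\mathbf{x}$ on each quadrant pattern and checking admissibility only produces the fixed point and the candidate $3$-cycles; it does not show that $\operatorname{Per}(F)$ contains no orbits of any other period, which is part of the claim and again requires the full orbit analysis, not a search restricted to periods $1$ and $3$ chosen because the statement announces them.
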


The most interesting scenario concerning dynamics arises when $ a < 0 $. To analyze it, we first demonstrate that, in this case, the dynamics is confined in an one-dimensional graph.

\begin{teoa}\label{t:teoB}
	 If $a<0,$ for each $b\in\R$  there is a compact graph $\Gamma=\Gamma_{a,b}$,  which is invariant under the map $F,$ such that for every $\mathbf{x}\in \R^2$ there exists a non-negative integer $n,$ that may depend on $\mathbf{x},$ such that $F^n(\mathbf{x})\in \Gamma.$  
\end{teoa}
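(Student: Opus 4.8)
The plan is to prove Theorem~\ref{t:teoB} in two stages: first identify a candidate invariant graph $\Gamma$ as an absorbing set built from the structure of $F$, and then show that every orbit enters it in finitely many steps. Since $F$ is piecewise linear, the plane decomposes into four regions according to the signs of $x$ and $y$, and on each region $F$ acts as an affine map. The first step is to write down these four affine pieces explicitly and examine their linear parts. When $a<0$, I expect the relevant linear maps to be volume-contracting or at least to push orbits toward a bounded region; concretely, I would compute the eigenvalues of each of the four linear parts and look for a common mechanism (for instance, a rotation-like action combined with contraction in one direction) that forces orbits to collapse onto a lower-dimensional set.

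\medskip
\noindent The second step is to construct $\Gamma$ concretely. I would look for a piecewise-linear curve that is forward-invariant: a finite union of segments whose image under $F$ lies in the same union. The natural candidates for the vertices of this graph are the fixed points of the four affine pieces together with their images under $F$ and the points where the curve crosses the coordinate axes (the loci $x=0$ and $y=0$ where $F$ switches branches). I would set up the requirement $F(\Gamma)\subseteq\Gamma$ as a system of linear conditions on these vertices, solve for them in terms of $a$ and $b$, and verify compactness. The dependence on $b$ (and not merely on $c=-b/a$) is expected to appear precisely through the locations of these vertices.

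\medskip
\noindent The third and most delicate step is to prove the absorption property: for every $\mathbf{x}\in\R^2$ there is an $n\ge 0$ with $F^n(\mathbf{x})\in\Gamma$. The plan here is to exhibit a Lyapunov-type function $V:\R^2\to[0,\infty)$ adapted to the four affine pieces, whose value strictly decreases along orbits outside $\Gamma$ and satisfies $V^{-1}(0)=\Gamma$, or to measure distance transverse to $\Gamma$ and show it is contracted by a definite factor at each step. The subtlety is that the contraction constant and even the direction of contraction change from region to region, so a single global norm need not work; I would likely need a piecewise-defined or adapted norm, and I must rule out orbits that evade $\Gamma$ by repeatedly straddling the switching lines $x=0$ and $y=0$. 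Controlling these boundary-crossing orbits, and confirming that the transverse distance cannot stabilize at a positive value, is where I expect the main obstacle to lie. A finite-time (rather than merely asymptotic) conclusion should follow because once the transverse coordinate lands exactly on one of the invariant segments the affine dynamics keeps it there, so I would aim to show that after finitely many iterations an orbit reaches an axis-crossing configuration forcing it onto $\Gamma$ exactly, rather than only converging to it.
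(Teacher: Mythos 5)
Your plan founders at step 1, and the failure propagates into step 3. If you actually compute the linear parts of the four pieces in \eqref{e:Fis}, you find that on $Q_2$ and $Q_4$ the determinant equals $2$: the map is area-\emph{expanding} there, so there is no common contraction mechanism to be found, and no adapted norm or Lyapunov function $V$ with $V^{-1}(0)=\Gamma$ can strictly decrease along all orbits off $\Gamma$. What actually produces a one-dimensional invariant set is the opposite phenomenon: on $Q_1$ and $Q_3$ the linear parts have determinant $0$ and rank one, so $F$ maps the \emph{whole} quadrant $Q_1$ onto a piece of the line $\{v-u=b-a\}$ and the whole quadrant $Q_3$ onto a piece of the line $\{u+v=a+b\}$ in a single step; this is exactly the collapse of the slope-$1$ lines in $Q_1$ and the slope-$(-1)$ lines in $Q_3$ described in Section~\ref{s:s2}. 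Hence $F(\R^2)$ is the union of two segments with the two-dimensional set $F_2(Q_2)\cup F_4(Q_4)$, and the proof in \cite{CGMM} proceeds by iterating this observation: one shows that orbits cannot remain in $Q_2\cup Q_4$ for more than a bounded number of steps, and that the forward images of the two collapse segments stabilize after finitely many iterations into a compact graph $\Gamma$, whose invariance is then a finite check. This yields \emph{exact} landing in $\Gamma$ with a bound on $n$ that is even uniform in $\mathbf{x}$ (for instance $F^5(x,y)\in\Gamma$ for all $(x,y)$ when $a=-1$ and $4<b<8$, as quoted in Section~\ref{ss:prelimgamma}), and it is carried out case by case in $b$ (the previous paper exhibits $37$ distinct graphs). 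Note also that the vertices of $\Gamma$ are not fixed points of the affine pieces, as you propose, but forward iterates of distinguished points such as $(0,b+1)$ and $(-b-1,0)$; compare $P_i=F^{i-1}((0,b+1))$ and $R_i=F^{i-1}((-b-1,0))$ in Section~\ref{ss:alpha}.

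Even granting your step 2, step 3 cannot be repaired in the form you propose. A Lyapunov or transverse-contraction argument gives at best asymptotic convergence to $\Gamma$, whereas the statement asserts $F^n(\mathbf{x})\in\Gamma$ exactly for some finite $n$; and since $F$ doubles areas on $Q_2$ and $Q_4$, transverse distances genuinely grow until the orbit enters $Q_1\cup Q_3$, so the monotone decrease you want is simply false. The entire content of the absorption step is the finite-time escape from $Q_2\cup Q_4$ combined with the rank-one collapse on $Q_1\cup Q_3$ --- precisely the point your outline defers to ``where I expect the main obstacle to lie.'' Without the degeneracy observation, your proposal contains no mechanism that can close that gap.
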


We remark that the graphs $\Gamma$ referred to in Theorem \ref{t:teoB} are unique and were completely characterized in \cite{CGMM}. In fact, in the appendix of that paper, we listed all the graphs arising in the family, obtaining a total of 37 distinct graphs (that include the boundary cases which explain the transition from one graph to another). In Figures \ref{f:esp3},  \ref{particio-1-3/4}, \ref{f:A} and \ref{ff:1}, we display the graphs corresponding to the parameter values considered in the statements of several results in the paper. In Figure \ref{fig:graphs}, we present a selection of graphs illustrating the topological diversity of these objects.

\begin{figure}[H]
\centering

\includegraphics[width=0.31\textwidth]{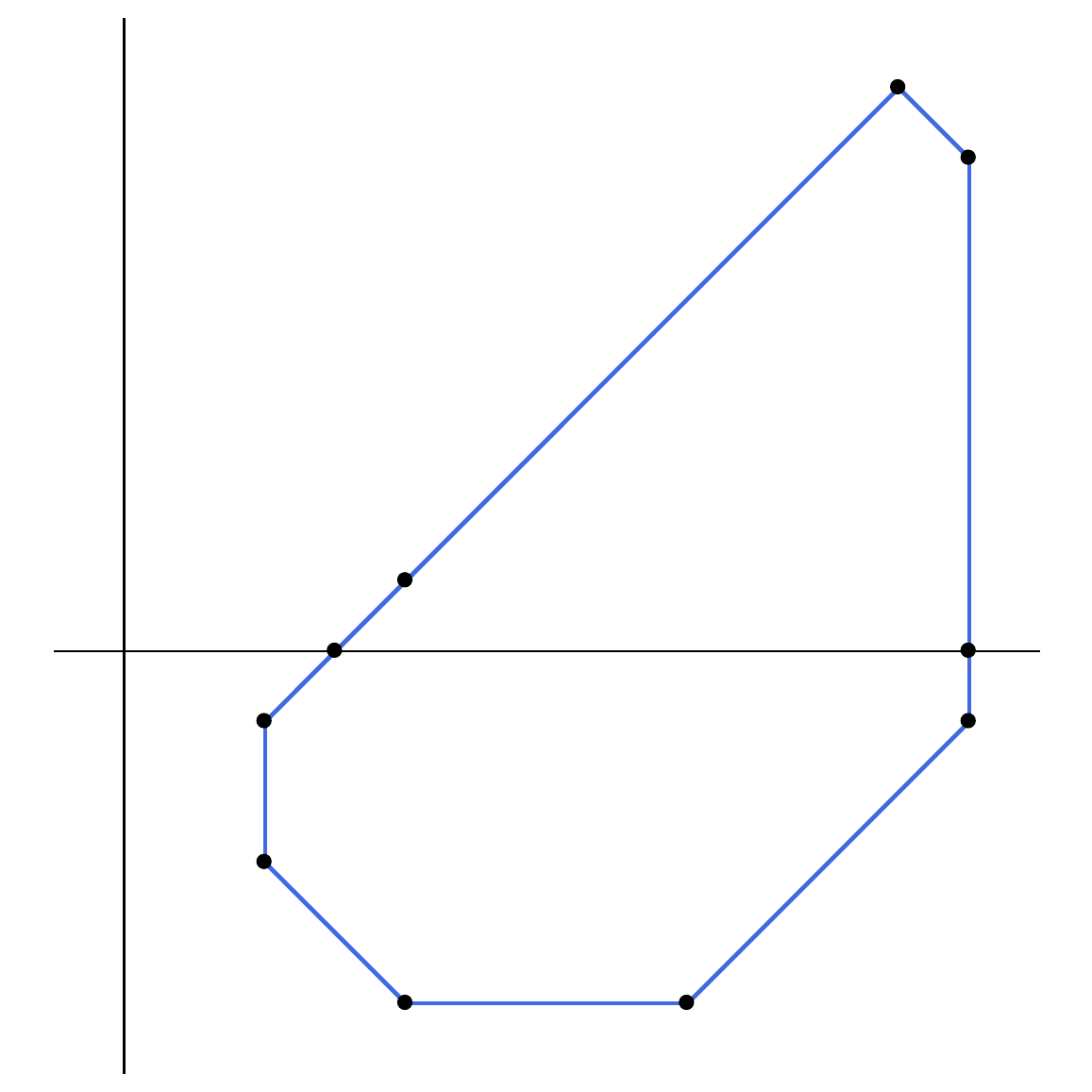}\hfill
\includegraphics[width=0.31\textwidth]{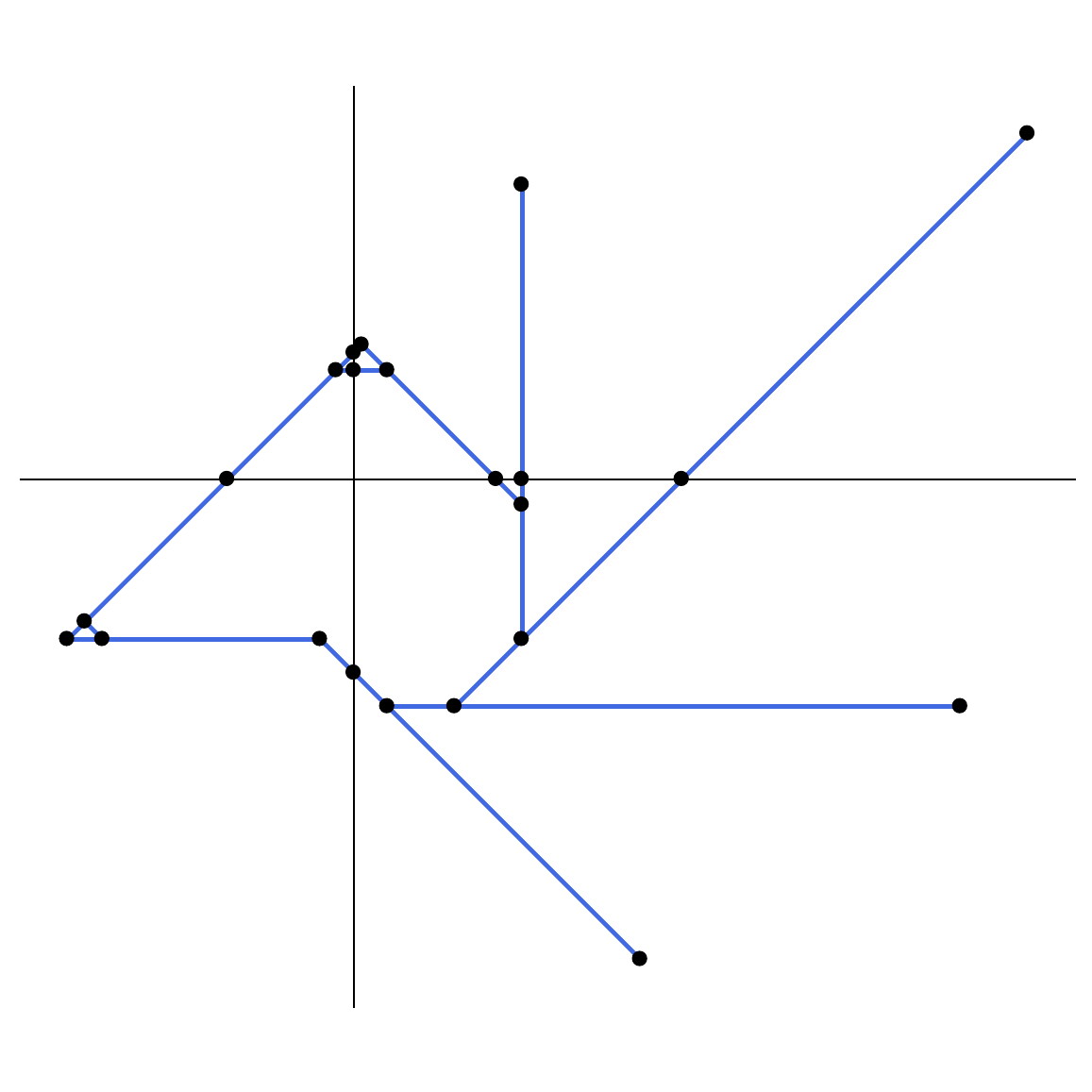}\hfill
\includegraphics[width=0.31\textwidth]{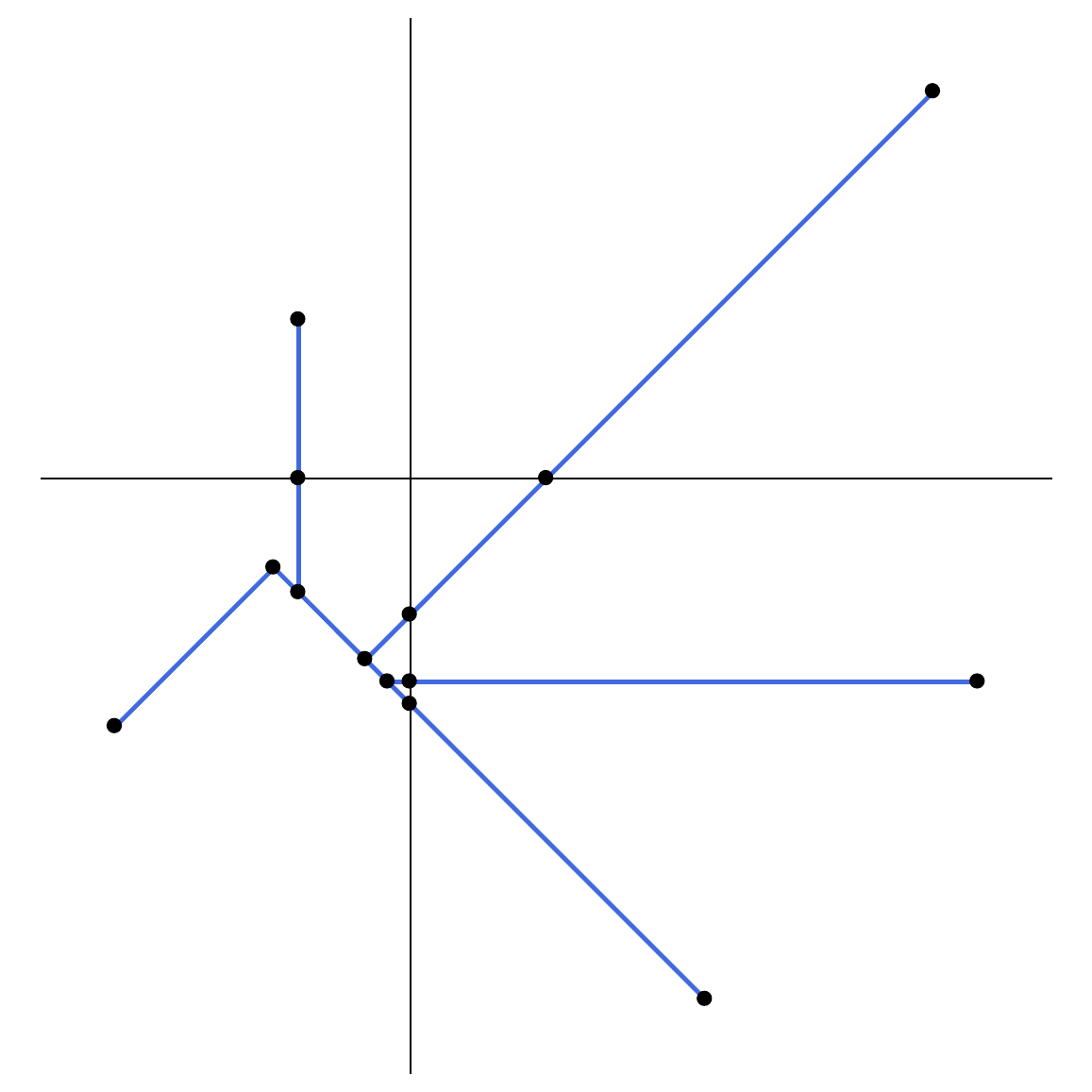}

\vspace{0.5cm}

\includegraphics[width=0.31\textwidth]{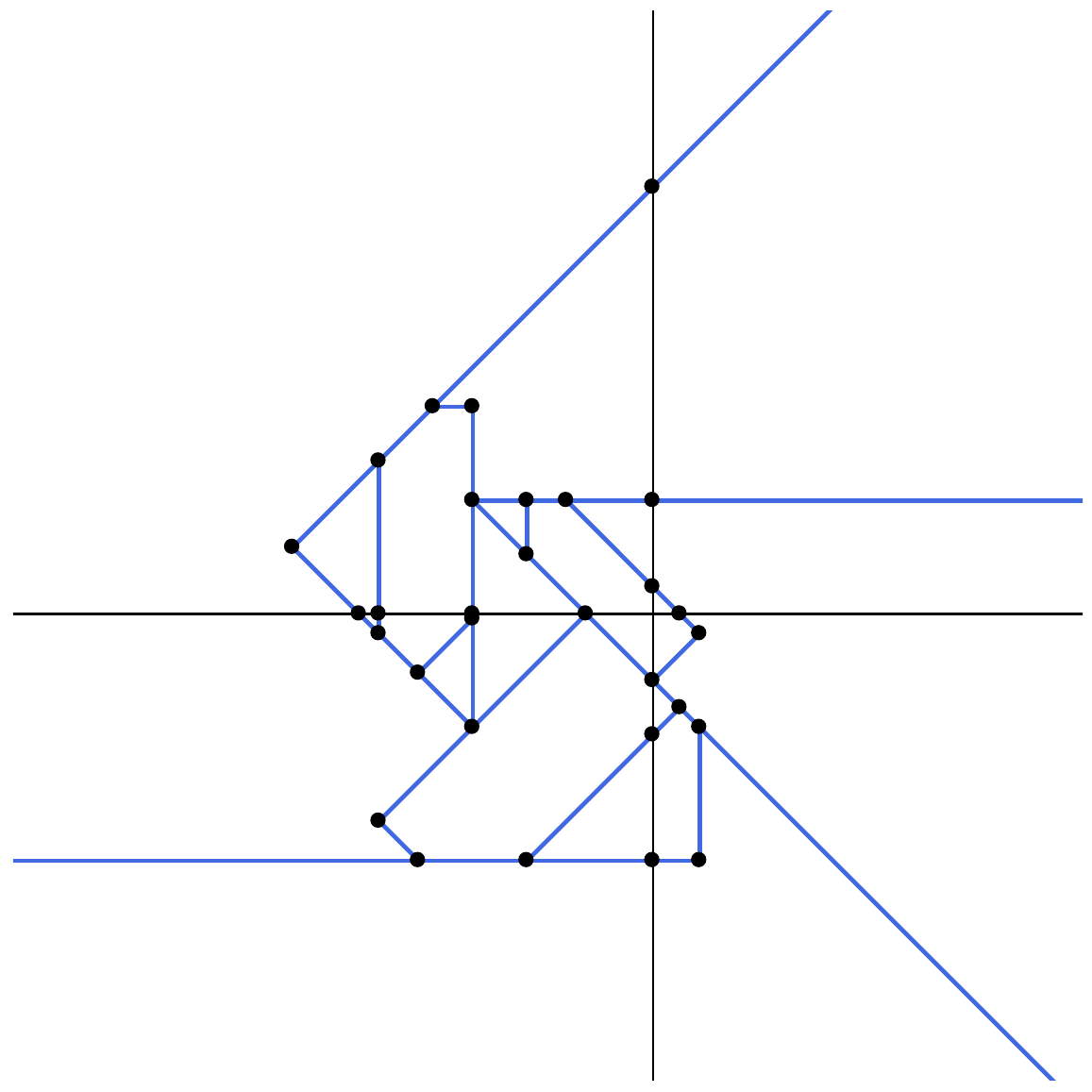}\hfill
\includegraphics[width=0.31\textwidth]{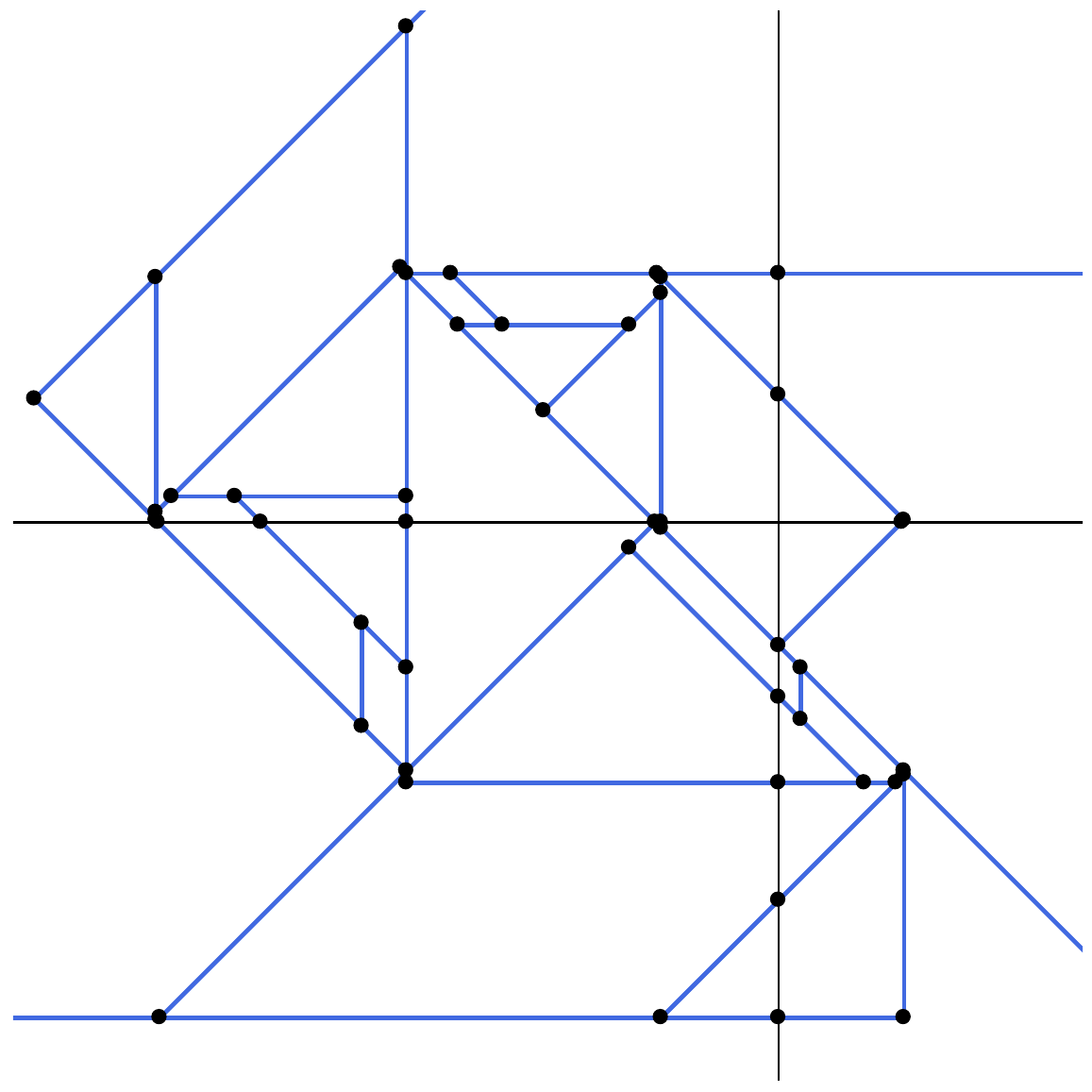}\hfill
\includegraphics[width=0.31\textwidth]{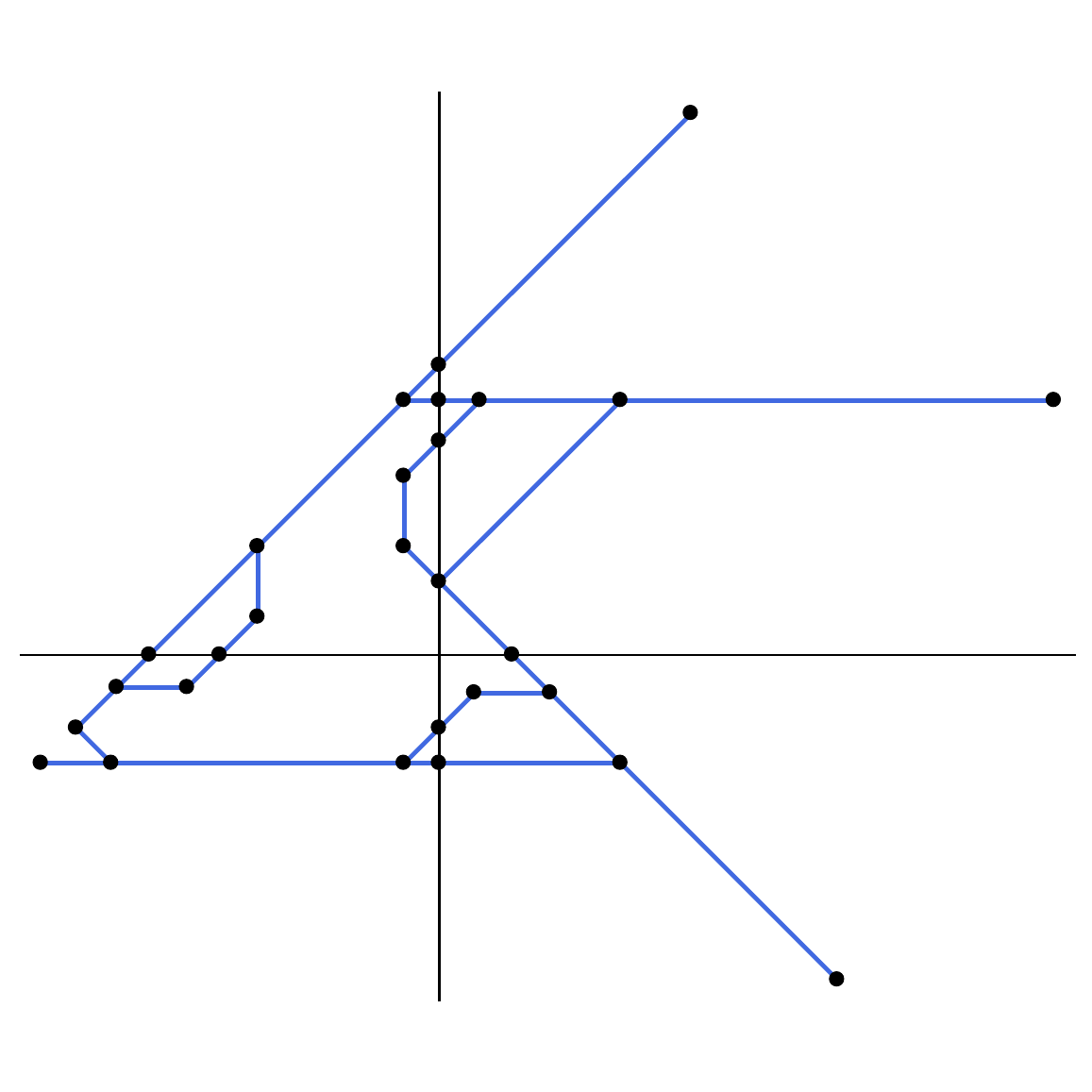}

\caption{Six different configurations of the invariant graphs for $F$ among the 37 presented in \cite{CGMM}. The dots in the graph indicate the vertices and the intersections with the coordinate axes; they have no significance in this figure.}
\label{fig:graphs}
\end{figure}

From Theorem \ref{t:teoB} we get that all the $\omega$-limit sets of $F$, except some fixed points and periodic orbits that appear for some values of $b$ and which are explicitly given, are contained in $\Gamma$. Thus, to study the dynamics of $F$ when $a<0$ it suffices to study the dynamics of $F\vert_{\Gamma}.$   As a first result,  we demonstrated the existence of an open and dense set formed by the pre-images of the segments within each graph $\Gamma$ that collapse to a point (that we call  \emph{plateaus}, see Section \ref{s:s2}), obtaining the following result which characterize that the number of possible $\omega$-limit sets of the pre-images of these plateaus is at most three. Depending on the values of the parameters, these three $\omega$-limit sets can be periodic orbits, Cantor sets or other more complicated subsets of~$\Gamma.$

 \begin{teoa}\label{t:teoC} Set $a<0,$ $b\in\R$ and  let $\Gamma$ be  the corresponding invariant graph for $F$ given in Theorem \ref{t:teoB}. Then for  an open and dense set of  initial conditions $\mathbf{x}\in \Gamma$    there are  at most three possible   $\omega$-limit sets.  Moreover, if  $b/a\in \Q$ these  $\omega$-limit sets are periodic orbits.
\end{teoa}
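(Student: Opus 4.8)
The plan is to exploit the non-injectivity of $F$ produced by the two absolute values, which forces $F$ to crush whole families of parallel segments onto single points. Writing $F$ explicitly on the four closed quadrants, one checks that the linear part is singular exactly on $\{x\ge 0,\,y\ge 0\}$, where $F(x,y)=(x-y+a,\,x-y+b)$ has kernel direction $(1,1)$, and on $\{x\le 0,\,y\le 0\}$, where $F(x,y)=(-x-y+a,\,x+y+b)$ has kernel direction $(1,-1)$; on the remaining two quadrants the linear part is $\sqrt 2$ times a rotation, hence conformal and invertible. Consequently the \emph{plateaus} of $\Gamma$ are precisely the segments of $\Gamma$ lying in the first closed quadrant and parallel to $(1,1)$, together with those lying in the third and parallel to $(1,-1)$: each is sent to a single point, and these collapse points are pinned to the two lines $Y-X=b-a$ and $X+Y=a+b$. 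First I would read off from the explicit list of graphs recalled from \cite{CGMM} that in every case the plateaus collapse to at most three distinct points $p_1,p_2,p_3$; this is a finite case check over the $37$ graph types.

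The first assertion then follows by a direct synchronisation argument. Let $P\subset\Gamma$ be the relatively open union of all plateaus and set $W=\bigcup_{n\ge 0}F^{-n}(P)$. If $\mathbf{x}\in W$ and $F^{k}(\mathbf{x})$ lies in a plateau collapsing to $p_i$, then $F^{k+1}(\mathbf{x})=p_i$, so the forward orbit of $\mathbf{x}$ coincides with that of $p_i$ from time $k+1$ onward and $\omega(\mathbf{x})=\omega(p_i)$. Hence every initial condition in $W$ realises one of $\omega(p_1),\omega(p_2),\omega(p_3)$, giving at most three possibilities. Since $W$ is a union of preimages of the open set $P$ under the continuous $F$, it is open, so the entire content of the first assertion reduces to proving that $W$ is dense in $\Gamma$.

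For density I would show that the complement $N=\Gamma\setminus W$, the set of points whose whole orbit avoids every plateau, is nowhere dense. On the two invertible quadrants $F$ multiplies lengths by exactly $\sqrt 2$, but on the two collapsing quadrants it projects onto a line and may contract arcs that are nearly parallel to the kernel direction; a naive expansion estimate is therefore unavailable, and one must instead work through the piecewise-linear Markov partition of $F|_{\Gamma}$ recalled from \cite{CGMM}. Using it, $F|_{N}$ is semiconjugate to a subshift, and one checks that no non-degenerate arc can stay in $N$: any such arc disjoint from all preimages of $P$ eventually follows an expanding itinerary and is stretched across a plateau, a contradiction in the compact graph $\Gamma$. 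I expect this step to be the main obstacle, precisely because the interplay between the $\sqrt 2$-expansion and the contraction near the kernel directions must be tracked through the Markov dynamics, and it is most delicate in the transitional zero-entropy regimes where the expanding itineraries are scarcest.

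For the rational case I would invoke the scaling symmetry of $F$: since $a<0$, normalising $a=-1$ gives $b=c$, so $c=-b/a\in\Q$ makes all parameters rational. The collapse points $p_i$ have coordinates that are rational functions of $a,b$, hence rational, and on each quadrant $F$ has the form (integer matrix)$\cdot(x,y)^{\top}+(a,b)$. Thus, fixing a common denominator $D$ of the coordinates of $p_i$ and of $a,b$, the forward orbit of $p_i$ remains inside $\tfrac1D\Z^2\cap\Gamma$, which is finite by compactness of $\Gamma$. Each such orbit is therefore eventually periodic and $\omega(p_i)$ is a periodic orbit; combined with $\omega(\mathbf{x})=\omega(p_i)$ for $\mathbf{x}\in W$, this yields the second assertion.
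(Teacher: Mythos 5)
Your outline does coincide with the mechanism behind Theorem~C as this paper describes it (the theorem itself is only restated here from \cite{CGMM}, but Sections~2.2 and~4 record its ingredients): the plateaus are exactly the slope-$1$ edges of $\Gamma$ in $Q_1$ and the slope-$(-1)$ edges in $Q_3$; the open dense set is the union of their preimages; the candidate $\omega$-limits are those of the finitely many collapse points; and your lattice argument for the rational case (normalize $a=-1$, note that each $F_i$ is an integer matrix plus the rational vector $(a,b)$, so the collapse orbits live in $\tfrac1D\Z^2\cap\Gamma$, a finite set by compactness) is a correct way to obtain eventual periodicity. The genuine gap is the density step, and it is aggravated by a misdiagnosis. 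Arcs of $\Gamma$ are not arbitrary plane arcs: they run along edges of $\Gamma$, and in all of the invariant graphs the edges have slopes in $\{0,\pm1,\infty\}$, a class preserved by $F$ (in $Q_1$ the directions $(1,0)$, $(0,1)$, $(1,-1)$ map to $(1,1)$, $(-1,-1)$, $(2,2)$, and analogously in the other quadrants). Hence on every non-plateau edge direction $F$ multiplies lengths by at least $\sqrt2$ (by $2$ on the anti-diagonal edges of $Q_1$ and the diagonal ones of $Q_3$); the arcs ``nearly parallel to the kernel'' that you fear simply do not occur inside $\Gamma$ except as the plateaus themselves. This uniform expansivity off the plateaus is precisely what the paper invokes (``on the rest of the edges, the dynamics is expansive''), and by dismissing it you are left with nothing in its place: the sentence ``one checks that no non-degenerate arc can stay in $N$\ldots is stretched across a plateau'' is a restatement of the claim to be proved, not an argument, as you implicitly concede by calling it the main obstacle.

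Two further points. First, even with the $\sqrt2$-expansion in hand, density is not a one-line length-growth contradiction: where the entropy is positive the monotone branches of $F^n$ proliferate exponentially and images can fold, so the expansion must be combined with the covering structure of the transition graphs (every partition interval admits a covering path to an interval whose image contains plateau preimages; e.g.\ for $4<b<8$ every vertex of the oriented graph leads to $G$, and $F(G)$ covers preimages of the plateau). That finite combinatorial verification over the $37$ graphs is absent from your sketch. Second, a smaller inaccuracy: what the case check must deliver is that the collapse points have at most three distinct $\omega$-limit sets, not that there are at most three collapse points; for instance, for $4<b<8$ the graph carries the two plateaus $\overline{P_8P_{10}}$ and $\overline{P_1P_{11}}$, collapsing respectively to $P_{13}$ (whose orbit is the $3$-cycle $P_{13}\to P_{14}\to P_{15}$) and to $P_3$ (whose orbit runs through $(x_0,-1)$ and is the one analysed in Section~3), and these two limit behaviours need not merge. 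Counting collapse points happens to suffice in the cases one can read off here, but the statement you reduce to is not the one the theorem needs, and the reduction should be phrased at the level of $\omega$-limits.
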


To study the dynamics on each graph $\Gamma$ we define a suitable partition of the graph and consider the oriented graph associated with the partition. This approach allows us to study the \emph{topological entropy}, $h_F$,  of $F\vert_{\Gamma}$.  Roughly speaking, the entropy $h_F$, introduced and developed in the seminal papers \cite{AKM,Bo3,MZ}, is a non-negative real number associated with a map, measuring its combinatorial complexity. In the next section, we recall its definition in more detail.   In particular, notice that if $h_F = 0$, the dynamics is considered simple, although some complicated limit sets, as Cantor ones, can appear for instance when the graph is a topological circle, the map has degree one and $F\vert_{\Gamma}$ has irrational rotation number. When  $h_F > 0$, the dynamics is really much more complex. Specifically, when $F\vert_{\Gamma}$ has positive entropy ($h_F > 0$), it exhibits periodic orbits with infinitely many distinct periods, and the orbits display various combinatorial behaviors. In particular, it can be shown that if $F|_{\Gamma}$ has positive entropy, then it is chaotic in the sense of Li and Yorke, which implies, among other things, that it has periodic points with arbitrarily large periods and that there exists an uncountable set $\mathcal{S} \subset \Gamma$, the \emph{scrambled set,} such that for any $p, q \in \mathcal{S}$ and each periodic point $r$ of $F\vert_{\Gamma}$, the following holds:
\[
\limsup_{n\to\infty} |F^n(p) - F^n(q)| > 0, \, \liminf_{n\to\infty} |F^n(p) - F^n(q)| = 0, \mbox{ and } 
\limsup_{n\to\infty} |F^n(p) - F^n(r)| > 0.
\]

%The concept of \emph{topological entropy} for a continuous map on compacts space was introduced by Adler, Konheim, and McAndrew (\cite{AKM}), and extended, later to non-continuous maps by Bowen, Misiurewicz and Ziemian (\cite{Bo3, MZ}). 

Our findings are summarized in the following theorem:

\begin{teoa}\label{t:teoD}

 Set $a<0, b\in\R$ and define $c=-b/a$. Consider the map $F$ given in~\eqref{e:F}, restricted to its corresponding invariant graph $\Gamma$ given in Theorem~\ref{t:teoB}. Then, there exist $\alpha$ and $\beta$ such that $F|_{\Gamma}$  has positive entropy if and only if $c\in (\alpha,-1/36)\cup(\beta,1)\cup(1,8),$ where $\alpha\in (-112/137,-13/16)\approx(-0.8175,-0.8125),$  $\beta\in(603/874,563/816)\approx(0.6899,0.6900)$ and in these two intervals the entropy of $F\vert_{\Gamma}$ is non-decreasing in $c.$ Moreover,
	the entropy as a function of $c$ is discontinuous at $c=-1/36$.

\end{teoa}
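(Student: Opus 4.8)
The plan is to reduce the statement to the analysis of a one-parameter family of piecewise monotone graph maps and then to read off positivity, monotonicity and the location of the thresholds from the transition matrices of their Markov partitions. First I would exploit the positive homogeneity of $F$: scaling the plane by $\lambda=-a>0$ conjugates $F=F_{a,b}$ to $F_{-1,\,-b/a}$, so for $a<0$ the restriction $F|_{\Gamma}$ is, up to linear conjugacy, determined by the single ratio $c=-b/a$. Hence the topological entropy is a well-defined function $h(c)$, and I may normalize $a=-1$ (so that $b=c$). Using Theorem~\ref{t:teoB} together with the catalogue of the finitely many ($37$) topological types of $\Gamma$, and collapsing the plateaus supplied by Theorem~\ref{t:teoC}, I would replace $F|_{\Gamma}$ by an entropy-preserving semiconjugate piecewise linear map $f_c$ on a circle or a tree, whose laps have slopes built from $\pm 1$.

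The core object is the oriented graph, with transition matrix $M(c)$, associated with a Markov partition of $f_c$. For $c\in\Q$ the itineraries are eventually periodic, so $f_c$ admits a finite Markov partition and $h(c)=\max\{0,\log\rho(M(c))\}$, where $\rho$ denotes the spectral radius; for general $c$ I would define $h(c)$ through Milnor--Thurston kneading theory, equivalently as a limit over rational approximants, which also yields continuity of $h$ away from the bifurcation values from the monotone dependence of the itineraries on $c$. To settle positivity I would run through the finitely many combinatorial regimes: in the complement of the three intervals I expect $f_c$ to be semiconjugate to a degree-one circle map, so that $\rho(M(c))=1$ and $h(c)=0$, whereas inside each of $(\alpha,-1/36)$, $(\beta,1)$ and $(1,8)$ I would exhibit an explicit pair of overlapping linear laps, i.e.\ a subgraph of the transition graph with $\rho>1$, forcing $h(c)>0$.

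For the monotonicity on $(\alpha,-1/36)$ and $(\beta,1)$ I would show that as $c$ increases the transition graph only gains edges (equivalently, the relevant kneading sequence increases), and since enlarging a nonnegative matrix cannot decrease its spectral radius, $h(c)$ is non-decreasing there. The thresholds $\alpha$ and $\beta$ are then characterized as the unique parameters in their regimes at which $\rho(M(c))=1$, that is, as roots of an explicit characteristic polynomial; evaluating this polynomial at the stated rational endpoints and invoking the intermediate value theorem together with the just-proved monotonicity traps $\alpha\in(-112/137,-13/16)$ and $\beta\in(603/874,563/816)$. Finally, for the discontinuity at $c=-1/36$ I would produce a fixed subgraph with $\rho>1$ that persists for all $c$ slightly below $-1/36$, so that $\liminf_{c\to(-1/36)^-}h(c)>0$, whereas at $c=-1/36$ the graph $\Gamma$ undergoes a structural change that destroys every positive loop, giving $h(-1/36)=0$; the resulting jump is the asserted discontinuity.

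I expect the main obstacle to be the monotonicity and the precise location of $\alpha,\beta$, because near these thresholds the Markov partition is infinite for irrational $c$, so a matrix-by-matrix computation is unavailable; instead one needs a uniform monotone-kneading argument valid across the whole continuum of sub-regimes, and one must identify the exact characteristic equation of the limiting combinatorics in order to bound the roots by the sharp rationals quoted. Making rigorous the asymmetry between the continuous transition at $\alpha$ and the discontinuous one at $-1/36$, both endpoints of the same positive-entropy interval, is the delicate point of the whole argument.
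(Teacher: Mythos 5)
Your general framework (normalizing $a=-1$ via the scaling conjugation, replacing $F|_{\Gamma}$ by piecewise monotone graph maps, certifying positive entropy by loops in the transition graph, and getting zero entropy from degree-one circle dynamics) does match the architecture of the proof in \cite{CGMM}, whose structure near $\alpha$ and $\beta$ is recalled in Sections~\ref{ss:alpha} and~\ref{ss:beta} of this paper. However, there is a genuine gap exactly at the point you yourself flag as the main obstacle. Your proposed mechanism for monotonicity --- ``as $c$ increases the transition graph only gains edges, equivalently the relevant kneading sequence increases'' --- is an assertion, not an argument, and it is not true in any direct matrix-by-matrix sense: in these piecewise linear families the combinatorics is reorganized (edges appear \emph{and} disappear) as $c$ varies, and monotonicity of entropy/kneading data in such families is a deep theorem, not a consequence of matrix enlargement. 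The actual proof proceeds by showing that on the windows $[-112/137,-13/16]$ and $[603/874,563/816]$ the return maps $F^6|_{\Pi}$ and $F^7|_{\Sigma}$ are (after an entropy-preserving semiconjugacy collapsing the plateau) conjugate to trapezoidal maps $T_{1/16,1/8,Z}$ and $T_{1/16,1/16,Z}$, and then imports the existence of the thresholds and the non-decreasing behavior of the entropy from the monotonicity theorem of Brucks--Misiurewicz--Tresser \cite{BMT}; the paper explicitly remarks that this input ``cannot be considered elementary''. Your sketch contains no substitute for it.

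A second, related error: you propose to characterize $\alpha$ and $\beta$ as roots of ``an explicit characteristic polynomial'' of the limiting combinatorics and then trap them by the intermediate value theorem. This cannot work, because the transition from zero to positive entropy at $\alpha$ and $\beta$ is the accumulation point of a period-doubling-type cascade, not a finite Markov bifurcation; no finite transition matrix has its spectral radius crossing $1$ exactly there. If such a polynomial existed, $\alpha$ would be an explicitly computable algebraic number, whereas Proposition~\ref{pro:ab} can only produce digit-by-digit rational bounds: on one side $2^N$-periodic orbits (with itineraries $\mathcal{R}*\mathcal{S}$ in the sense of \cite{Col}) whose induced Markov partitions have zero entropy, giving $b_{2^N}<\alpha$, and on the other side $3\cdot 2^N$-periodic orbits forcing $h>\ln(2)/(3\cdot 2^N)$ via the Bowen--Franks theorem \cite{BF}, giving $\alpha<b_{3\cdot 2^N}$. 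So the IVT-plus-monotonicity step must be run on an infinite family of periodic-orbit certificates, not on a single characteristic equation. Your treatment of the discontinuity at $c=-1/36$ (a persistent loop with $\rho>1$ for $c$ slightly below, destroyed at $c=-1/36$) is sound in spirit and consistent with how the jump arises, but it too ultimately rests on the explicit graph analysis, which your proposal leaves unspecified.
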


\subsection{Main results}

We present some further results related to those mentioned earlier. These results aim to provide a deeper understanding of the subtleties of the dynamics of $F$, as well as to illustrate in more detail the techniques we employ. Throughout this paper, we assume that $a<0$, which corresponds to the case in which invariant graphs with complex internal dynamics arise.

It is well known that the entropy as a function defined in the space of continuous self maps of the interval is
lower semi-continuous (see \cite{ALM}). In this book there is a chapter collecting results about continuity properties of the entropy defined in different spaces. For example it is continuous when  we consider the entropy defined in the space of ${\cal C}^{\infty}$ selfmaps of the interval. Also for unimodal maps it is continuous on all functions with positive entropy. 

Inspired by this last result it seems natural to investigate the continuity of the entropy of our family in the values $c$ in which there is a transition from zero  to positive entropy. These values are $\alpha,-1/36,\beta,1$ and $8.$ In $\cite{CGMM}$ we already noted that the entropy is discontinuous at $c=-1/36,$ see Theorem~\ref{t:teoD}. Also, since in a neighborhood of $\alpha$ and $\beta$ the family is conjugated of a family previously studied in \cite{BMT}, we can assert that the entropy is continuous at $c=\alpha$ and $c=\beta.$

 In Section \ref{s:entropyanalysis} we give a detailed analysis of the entropy behavior for $4<c<8$, which in particular demonstrates the continuity of the entropy at $c=8$. Our main results are given in next theorem  and in Proposition~\ref{p:teo1}. We remark that a similar approach will be enough to prove the continuity of the entropy at $c=1.$  Beyond these parameter values, we do not obtain continuity results. Our analysis focuses on parameter regions where the entropy changes from zero to positive values. Extending these results is challenging, as any general approach must take into account changes in the topology of the invariant graphs.

\begin{teo}\label{t:teocont}
 Assume that $a<0$. The entropy function $h_F(c)$ is continuous at the parameter value $c=-b/a=8$.
\end{teo}

The above theorem, together with the results of~\cite{CGMM}, induce to think about some natural questions that we do not face in this work: Which are the regularity properties of  $h_F(c)$ as a function of $c$? Is $c=-1/36$ the unique discontinuity point of  $h_F(c)$?  Which are the values of $c$ that provide the maximum entropy in our family of maps?

The values of the parameters $\alpha$ and $\beta$ in the statement of Theorem \ref{t:teoD} have only been given with few significant digits. In Sections \ref{ss:alpha} and \ref{ss:beta} we propose a constructive way to obtain rational upper and lower bounds for them, as sharp as desired. Although the main contribution is the methodology itself, as an application of our method we prove:

\begin{propo}\label{pro:ab} Let $\alpha$ and $\beta$ be the two values appearing in Theorem~\ref{t:teoD}. Then
$$\alpha\in\left(-\frac{1049417824596806956103568}{1284474531463219438945271},-\frac{140850476140085945702816746162288}{172399253286857828660669132569609}\right).$$ The length of this interval is less than $4\times 10^{-40}$. And
$$\beta\in\left(\frac{945506314303393205598153}{1370433212950874384162254}, \frac {798396920638883099973166531706985228123}{	1157210312199077596904301690272087447914}\right).$$  The length of this interval is less than $5\times 10^{-50}$.
\end{propo}	
In practical terms, the above result implies that
\begin{align*}
		\alpha &= -0.817001660127394075579379106922368833240\ldots,\\
		\beta & = 0.68993242820457428670048891295078173870526\ldots,%03507745\ldots\\	
	\end{align*}	
	where all shown digits are correct.

As we already comment in  \cite{CGMM}, we suspect that the statement of Theorem \ref{t:teoC} is satisfied by a set of full Lebesgue measure in $\Gamma$. The validity of this fact could explain why numerical simulations only reveal periodic orbits.  We refer the reader to the discussion at the beginning of Section~\ref{s:full}. In the earlier paper, we did not provide any proof to support this suspicion. In the present paper, we prove that, for certain parameter values, the set of full-measure orbits converges to at most three $\omega$-limit sets, which are periodic orbits whenever the parameters are rational.
More concretely, the result is a straightforward consequence of   Propositions~\ref{p:propofull} and~\ref{p:x} proved in Section~\ref{s:full}.

 \begin{teo}\label{t:teonou} Set $a<0,$ $b\in\R$,  $c=-b/a$ and  let $\Gamma$ be  the corresponding invariant graph for $F$ given in Theorem \ref{t:teoB}. Then for  $c<-2$ or for 
 	$c\in [-112/137,-13/16]\cup [603/874,563/816]$  and
 	for a full Lebesgue measure of  initial conditions in~$\Gamma$, there are  at most three possible   $\omega$-limit sets.  Moreover, if  $b/a\in \Q$ these  $\omega$-limit sets are periodic orbits.
\end{teo}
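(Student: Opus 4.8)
Theorem~\ref{t:teonou} is, by construction, the union of the two parameter windows treated separately in Proposition~\ref{p:propofull} (the range $-b/a<-2$) and Proposition~\ref{p:x} (the two intervals around $\alpha$ and $\beta$), so the plan is simply to prove these two propositions. In both windows Theorem~\ref{t:teoC} already furnishes an open and dense set $U\subset\Gamma$, namely $U=\bigcup_{n\ge0}F^{-n}(P)$ where $P\subset\Gamma$ is the union of the (open) plateaus, on which there are at most three $\omega$-limit sets, which are periodic orbits when $b/a\in\Q$. Hence it suffices to upgrade ``open and dense'' to ``full measure'', i.e. to prove that the \emph{survivor set}
\[
S:=\Gamma\setminus U=\bigcap_{n\ge 0}F^{-n}\big(\Gamma\setminus P\big)
\]
is Lebesgue-null in $\Gamma$. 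Once $\operatorname{Leb}(S)=0$ is established, no further study of $\omega$-limit sets is needed, since the full-measure set $U$ already has the desired property.

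For the range $-b/a<-2$ (Proposition~\ref{p:propofull}) I would first invoke the explicit description of $\Gamma$ and of $F|_{\Gamma}$ from~\cite{CGMM} in this window to reduce the dynamics to a one-dimensional piecewise-affine model $f$, in which the plateaus become a finite union of constant laps (a ``hole'' $P$) and the remaining laps are affine with slopes of modulus $\ge\lambda>1$. Uniform expansion off the hole is the key structural fact, and it forces a definite escape rate: one shows there exist $n_0\in\N$ and $\delta\in(0,1)$ such that within any $n_0$ consecutive steps at least a proportion $\delta$, in Lebesgue measure, of every surviving interval is mapped into $P$. Iterating this escape estimate yields $\operatorname{Leb}\big(\bigcap_{k\le n}F^{-k}(\Gamma\setminus P)\big)\le(1-\delta)^{\lfloor n/n_0\rfloor}|\Gamma|\to0$, whence $\operatorname{Leb}(S)=0$. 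Equivalently, $S$ is a compact invariant repeller on which $f$ is uniformly expanding and which is disjoint from the open hole, and such a set is Lebesgue-null; the fact that $F|_{\Gamma}$ has zero entropy in this window is consistent with this, the expanding laps being transient and $S$ itself carrying no entropy.

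For the two intervals $-b/a\in[-112/137,-13/16]\cup[603/874,563/816]$ around $\alpha$ and $\beta$ (Proposition~\ref{p:x}) I would instead exploit the conjugacy, already used in the discussion preceding Theorem~\ref{t:teocont}, between $F|_{\Gamma}$ and a map of the family studied in~\cite{BMT}. The plan is to transfer the corresponding full-measure statement from that reference: in the \cite{BMT} family Lebesgue-almost every point is attracted to one of finitely many attractors, so the survivor set of the plateau-hole is null there. The remaining work is to check that the conjugacy is \emph{nonsingular}, i.e. sends Lebesgue-null sets to Lebesgue-null sets (it is piecewise affine, hence bi-Lipschitz on each piece), so that the property $\operatorname{Leb}(S)=0$ is preserved under the change of coordinates and holds in the original variable, completing Proposition~\ref{p:x}.

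The main obstacle, I expect, lies in the case $-b/a<-2$. Because this window is in the zero-entropy regime, one cannot simply assert that $F|_{\Gamma}$ is globally expanding; the delicate point is to verify, from the piecewise-linear formulas of~\cite{CGMM}, that every non-plateau lap is genuinely expanding and that every orbit is eventually fed into the hole, so that the recurrent survivor set $S$—possibly an uncountable Cantor set of zero entropy—is nevertheless Lebesgue-null. Making the constants $\lambda>1$, $\delta>0$ and $n_0$ explicit, and in particular ruling out any attracting periodic behavior off the plateau pre-images (which would produce a positive-measure survivor set), is where the real computation resides. For the $\alpha,\beta$ intervals the only subtlety is the measure-class (nonsingularity) check on the conjugacy, which is comparatively routine.
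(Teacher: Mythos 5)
Your overall decomposition is exactly the paper's: reduce Theorem~\ref{t:teonou} to Proposition~\ref{p:propofull} (the range $-b/a<-2$) and Proposition~\ref{p:x} (the $\alpha,\beta$ windows), and in both cases upgrade the open-and-dense set of Theorem~\ref{t:teoC} to full measure by showing that the plateau preimages have full measure. For the $\alpha,\beta$ windows your route coincides with the paper's: both transfer the \emph{Main fact} of \cite{BMT} (the preimages of the constancy interval of a trapezoidal map have full Lebesgue measure) through the changes of variables, which are affine and hence nonsingular; the ingredients you leave implicit, but which \cite{CGMM} supplies, are that one must first pass to the return map $F^6|_{\Pi}$ (resp.\ $F^7|_{\Sigma}$) on an invariant subinterval, that the passage from $g_1$ to $g_2$ is only a \emph{semi}conjugacy collapsing a constancy interval (harmless, since that interval is itself a plateau preimage), and that all of $\Gamma$ except a finite repelling set eventually enters $\Pi$ (resp.\ $\Sigma$). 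For $-b/a<-2$, however, your route is genuinely different from, and heavier than, the paper's. The paper computes the return map $F^7$ on each of the seven edges $A,\dots,H$ cyclically permuted by $F$ and finds it is constant on two outer laps (which are already plateau preimages) and affine with slope $16$ on the single middle lap; since there is only one expanding branch, and it maps its lap onto the whole edge, the survivor set in each edge is exactly one repelling fixed point (a point of the repelling $7$-periodic orbit $\mathcal{Q}$), so the complement of the plateau preimages is \emph{finite} and no escape-rate estimate is needed. Your scheme would also work here (with $n_0=7$ and $\delta=15/16$), but the principle you lean on --- ``uniform expansion off the hole forces a definite escape rate'' --- is not valid in general: a piecewise expanding map can carry an ergodic absolutely continuous invariant measure whose support avoids the hole, producing a positive-measure survivor set, so some irreducibility linking every surviving interval to the hole must be verified. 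Here that irreducibility is exactly what the paper's explicit computation exhibits (each surviving interval is mapped by the return map onto a full edge, a definite proportion of which lies in the hole); thus in either approach the computation of $F^7$ on the edges is unavoidable, and once it is done the paper's conclusion is immediate, whereas your Cantor-set worry about $S$ evaporates: $S$ is not merely null but finite.
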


 Clearly, this is only a partial result, as it applies to only a subset of the parameter space. At present, we do not have a unified approach that would allow us to address the problem without relying on the specific dynamical properties of $F$ on particular invariant graphs.

Again, this result raises new questions that we find interesting. For example, what kind of geometry do the basins of attraction in $\mathbb{R}^2$ of the $\omega$-limit sets in the above result exhibit? Are they dense in the plane? These questions are beyond the scope of the present work and we do not address them here.

 Finally, we would like to point out that beyond the results obtained for this family of maps, in the paper we develop methodological tools for studying entropy transitions through invariant graphs, give rational bounds for bifurcation parameters and proving the full measure of certain attractors within the graphs.

\section{Preliminary results}

In this section we recall some basic concepts about entropy of one dimensional maps and also introduce some notations used in  our work. 
Again, the literature on the subject is vast. 
As illustrative references only, we refer the reader to the pioneering works \cite{AKM,Bo3,MZ}, the reference book \cite{ALM}, the short survey \cite{Ll15} and the references therein, as well as the following papers dealing specifically with topological entropy on graphs and trees: \cite{ABK,AJM,BGMY,LlM93}, cited here without any claim of exhaustiveness.

\subsection{Topological entropy}\label{a:prelim}

We present the definition of topological entropy for the particular class of maps that we will use throughout this work. In \cite{ALM}, more precise definitions and results can be found for interval or circle maps, which are adaptable for maps of compact graphs.

We say that $F$, a self-map of a compact graph $\Gamma$,  is {\em piecewise monotone} if there exists a finite covering $\mathcal{A}$ of $\Gamma$ by intervals (i.e., segments of edges) such that the image of each interval in $\mathcal{A}$ is an interval and $F$, restricted to each interval $A$, is continuous and monotone for all $A\in \mathcal{A}$.  \emph{From now on, we consider that  $F:\Gamma\longrightarrow \Gamma$ is a piecewise monotone map of the compact graph $\Gamma$. }

Let $\mathcal{P}=\{I_1,\ldots,I_n\}$ be a finite partition of $\Gamma$ by closed intervals. We say that $\mathcal{P}$ is a {\em monopartition} if $F(I_i)$ is an interval and $F\vert_{I_i}$ is continuous and monotone for all $i\in\{1,\ldots,n\}.$ We call the endpoints of the intervals $I_i$ the \emph{turning points}, and we denote the set of all such points by $C.$ When $x\in \Gamma\setminus C$, we define the \emph{address} of $x$ as $A(x):=I_i$ if $x\in I_i.$ If $x\in \Gamma\setminus\left( \bigcup_{i=0}^{m-1}F^{-i}(C)\right)$, we define the {\em itinerary of length $m$ of $x$} as the sequence of symbols
$I_m(x)=A(x)A(F(x))\ldots A(F^{m-1}(x)).$

Let $N(F,\mathcal{P},m)$ be the number of distinct itineraries of length $m$ (note that $N(F,\mathcal{P},m)\leq n^m$). Then, the topological entropy can be defined using the following result:

\begin{lem}\label{grownumber} 
	Let $F:\Gamma\longrightarrow \Gamma$ be a piecewise monotone map on a compact graph $G$. Let $\mathcal{P}$ be a monopartition. Then the limit 
	$s(F):=
	\lim\limits_{m\to \infty} \sqrt[m]{N(F,\mathcal{P},m)}$, called  \emph{the growth number}, exists. Moreover, it is independent of the choice of the monopartition $\mathcal{P}$, and 
	\[
	h(F):=\ln\left(\lim\limits_{m\to \infty} \sqrt[m]{N(F,\mathcal{P},m)}\right)
	\]
	is the \emph{topological entropy} of $F.$
\end{lem}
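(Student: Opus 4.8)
The plan is to separate the two assertions: the existence of the limit for a single monopartition, and its independence of the monopartition.

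\emph{Existence.} I would show that $a_m:=\ln N(F,\mathcal{P},m)$ is subadditive. Writing the length-$(m+n)$ itinerary of a point $x$ as the concatenation of the length-$m$ itinerary of $x$ with the length-$n$ itinerary of $F^m(x)$, the assignment sending a length-$(m+n)$ itinerary to this pair is injective (the original word is recovered by concatenation), so $N(F,\mathcal{P},m+n)\le N(F,\mathcal{P},m)\,N(F,\mathcal{P},n)$, i.e. $a_{m+n}\le a_m+a_n$. Since $1\le N(F,\mathcal{P},m)\le n^m$, the sequence $a_m/m$ is bounded; hence by the classical subadditivity (Fekete) lemma $\lim_m a_m/m=\inf_m a_m/m$ exists and is finite, and exponentiating gives the existence of $\lim_m\sqrt[m]{N(F,\mathcal{P},m)}$.

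\emph{Independence.} To compare two monopartitions $\mathcal{P}$ and $\mathcal{Q}$ I would introduce their common refinement $\mathcal{R}=\mathcal{P}\vee\mathcal{Q}$, whose turning set is $C_{\mathcal{P}}\cup C_{\mathcal{Q}}$. One first checks that $\mathcal{R}$ is again a monopartition, since each of its intervals lies inside an interval of $\mathcal{P}$ and a continuous monotone map restricted to a subinterval stays continuous and monotone with interval image. The easy direction is that coarsening does not increase the count: every $\mathcal{R}$-address determines the $\mathcal{P}$-address, so the $\mathcal{R}$-itinerary determines the $\mathcal{P}$-itinerary and $N(F,\mathcal{P},m)\le N(F,\mathcal{R},m)$ (likewise for $\mathcal{Q}$). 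It is convenient to run the reverse comparison on the geometric count $M_{\mathcal{P}}(m)$, the number of connected components of $\Gamma\setminus\bigcup_{i=0}^{m-1}F^{-i}(C_{\mathcal{P}})$ (the pieces on which the length-$m$ itinerary is constant and $F^m$ is monotone). On each such piece $J$ every iterate $F^i|_J$, $0\le i<m$, stays inside a single $\mathcal{P}$-interval and is therefore monotone with interval image; consequently each of the $p:=\#(C_{\mathcal{Q}}\setminus C_{\mathcal{P}})$ extra turning points has at most one preimage in $J$ under each $F^i$, so $J$ is cut into at most $mp+1$ subpieces by the additional constraints. This yields the sandwich $M_{\mathcal{P}}(m)\le M_{\mathcal{R}}(m)\le (mp+1)\,M_{\mathcal{P}}(m)$, and since $(mp+1)^{1/m}\to1$ the growth rate $\lim_m\sqrt[m]{M_{\mathcal{P}}(m)}$ is the same for $\mathcal{P}$, $\mathcal{Q}$ and $\mathcal{R}$.

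\emph{Main obstacle.} The step I expect to cost the most work is identifying the itinerary count with this geometric count at the level of exponential growth, namely $\lim_m\sqrt[m]{N(F,\mathcal{P},m)}=\lim_m\sqrt[m]{M_{\mathcal{P}}(m)}$. The inequality $N\le M$ is immediate, but the reverse is delicate because a single itinerary may be realized on several components, so $M$ is not literally bounded by a polynomial multiple of $N$. Here one must invoke the monotone-lap structure in the style of Misiurewicz--Szlenk: consecutive components always carry distinct itineraries (crossing a cut point changes one symbol), and controlling how often a given itinerary can recur along the graph forces $M$ and $N$ to share the same exponential rate. Once this identification is in place, the partition-independence of $\lim_m\sqrt[m]{M_{\mathcal{P}}(m)}$ established above transfers to $\lim_m\sqrt[m]{N(F,\mathcal{P},m)}$, so the common value defines $h(F)$ unambiguously, completing the proof.
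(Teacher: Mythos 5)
First, a point of comparison: the paper offers no proof of Lemma~\ref{grownumber} at all --- it is stated as a known result, imported from the literature (\cite{ALM}, and \cite{MZ} for the piecewise monotone setting), so your attempt must be judged on its own merits rather than against an in-paper argument. Your existence step is correct and complete: the concatenation map is injective on realized itineraries, giving $N(F,\mathcal{P},m+n)\le N(F,\mathcal{P},m)\,N(F,\mathcal{P},n)$, and Fekete's lemma applies since $0\le \ln N(F,\mathcal{P},m)\le m\ln n$.

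The gap is in the independence step, and it sits exactly where you flagged it. (1) Even your ``easy direction'' $N(F,\mathcal{P},m)\le N(F,\mathcal{R},m)$ is not sound as stated: the projection sends realized $\mathcal{R}$-itineraries to realized $\mathcal{P}$-itineraries, but the inequality requires this map to be \emph{onto}, i.e.\ every realized $\mathcal{P}$-itinerary must be realized by a point that also has a well-defined $\mathcal{R}$-itinerary. In this paper ``monotone'' includes constant pieces (the maps $F|_{\Gamma}$ have plateaus), so $\bigcup_{i<m}F^{-i}(C_{\mathcal{R}})$ can contain whole intervals that $\bigcup_{i<m}F^{-i}(C_{\mathcal{P}})$ does not, and a $\mathcal{P}$-itinerary realized only on such an interval is lost upon refinement; this needs an argument, not just the observation that $\mathcal{R}$-addresses determine $\mathcal{P}$-addresses. (2) More seriously, the identification $\lim_m\sqrt[m]{N(F,\mathcal{P},m)}=\lim_m\sqrt[m]{M_{\mathcal{P}}(m)}$, which you yourself call the main obstacle, is not proved but deferred to ``Misiurewicz--Szlenk style'' arguments, and the one concrete justification you offer --- that consecutive components always carry distinct itineraries because crossing a cut point changes one symbol --- is false for exactly this class of maps: if the first entry of the orbit of a cut point into $C$ occurs at a fold (for instance when the critical value is a valence-one vertex of $\Gamma$, or when the cut is an entire collapsed interval), no symbol changes and adjacent components can share their full length-$m$ itinerary. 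Since, beyond Fekete, the entire content of the lemma is precisely this Misiurewicz--Szlenk type comparison, invoking it amounts to assuming what is to be proved. What you do establish correctly is the sandwich $M_{\mathcal{P}}(m)\le M_{\mathcal{R}}(m)\le (mp+1)\,M_{\mathcal{P}}(m)$ (with the minor corrections that a preimage of an extra cut point under a monotone $F^i|_J$ may be a closed subinterval rather than a point, and that on a graph the ``$k$ cuts give $k+1$ pieces'' count is only true up to a bounded constant, neither of which affects the growth rate); so your text is a valid proof of existence together with partition-independence of the lap-type count $M$, while the bridge from $M$ to the itinerary count $N$ that the statement actually concerns remains open.
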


It is well-known that the entropy is an invariant for conjugation. In the case of interval maps, it is also an invariant by the so called {\em semiconjugation.} Recall that two piecewise monotone self maps on the interval $I,$  $f$ and $g,$ are said  to be {\em semiconjugated} if there exists a non-decreasing map $s:I\longrightarrow I$ such that $s(I)=I$ and $g\circ s=s\circ f.$

The entropy can be calculated in a straightforward way when the graphs are Markov graphs. We say that $\mathcal{P}$ is a {\em Markov partition} if, for all $I\in\mathcal{P},$ $F(I)$ is the union of some elements of $\mathcal{P}.$ Clearly, in this case, the set of turning points is invariant.

\emph{From now on, we consider that $\mathcal{P}=\{I_1,\ldots,I_n\}$ is a monopartition of $\Gamma$.} We consider {\em the matrix $M(F,\mathcal{P})$  associated with $\mathcal{P}$} as the $n\times n$-matrix defined by
\[
m_{i,j}=\left\{
\begin{array}{ll}
	1, & \mbox{if } I_j\subset F(I_i); \\
	0, & \mbox{otherwise.}
\end{array}
\right.
\]
We denote the spectral radius on $M(F,\mathcal{P})$, i.e. the largest absolute value of its eigenvalues, by $r(\mathcal{P})$. From the Perron-Frobenius Theorem, we know that the spectral radius of $M(F,\mathcal{P})$ is achieved by a positive real eigenvalue. The following result allows us to calculate the entropy using the Markov matrix:

\begin{lem}\label{Markov} 
	Let $F:\Gamma\longrightarrow \Gamma$ be a piecewise monotone map on a compact graph, and let $\mathcal{P}=\{I_1,\ldots,I_n\}$ be a monopartition. Then 
	$
	r(\mathcal{P})\leq s(F).
	$
	Moreover, if $\mathcal{P}$ is a Markov partition, then $r(\mathcal{P})=s(F).$
\end{lem}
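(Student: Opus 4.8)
The plan is to reduce both assertions to a comparison between the itinerary count $N(F,\mathcal{P},m)$ entering the definition of $s(F):=\lim_m\sqrt[m]{N(F,\mathcal{P},m)}$ and the number of admissible paths of length $m$ in the directed graph encoded by $M=M(F,\mathcal{P})$. Writing $\mathbf{1}\in\R^n$ for the all-ones vector, the number of symbol sequences $I_{j_0}I_{j_1}\cdots I_{j_{m-1}}$ satisfying $m_{j_k,j_{k+1}}=1$ for every $k$ is exactly $\mathbf{1}^{\top}M^{m-1}\mathbf{1}$. Since $M$ is a nonnegative integer matrix, its spectral radius is either $0$ (when the associated graph is acyclic and $M$ is nilpotent) or at least $1$; in the latter, relevant case the Perron--Frobenius theorem together with Gelfand's formula $\lim_m\|M^{m}\|^{1/m}=r(\mathcal{P})$ gives $\lim_m\sqrt[m]{\mathbf{1}^{\top}M^{m-1}\mathbf{1}}=r(\mathcal{P})$. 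Thus it suffices to bound $N(F,\mathcal{P},m)$ below by $\mathbf{1}^{\top}M^{m-1}\mathbf{1}$ in general, and above by the same quantity under the Markov hypothesis.

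For the inequality $r(\mathcal{P})\le s(F)$ I would establish a realization lemma: every admissible path is the itinerary of some point. Given $I_{j_0},\dots,I_{j_{m-1}}$ with $I_{j_{k+1}}\subset F(I_{j_k})$, I construct backwards closed intervals $K_k\subset I_{j_k}$ with $F(K_k)=K_{k+1}$: set $K_{m-1}=I_{j_{m-1}}$, and given $K_{k+1}\subset I_{j_{k+1}}\subset F(I_{j_k})$, use that $F|_{I_{j_k}}$ is continuous and monotone with interval image to take $K_k:=(F|_{I_{j_k}})^{-1}(K_{k+1})$, a nondegenerate closed subinterval of $I_{j_k}$. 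Then every $x\in K_0$ satisfies $F^{k}(x)\in K_k\subset I_{j_k}$ for all $k$, so, choosing $x$ outside the finite set $\bigcup_{i=0}^{m-1}F^{-i}(C)$, its itinerary is the prescribed path. Distinct paths yield points with distinct itineraries, whence $N(F,\mathcal{P},m)\ge \mathbf{1}^{\top}M^{m-1}\mathbf{1}$; taking $m$-th roots and passing to the limit gives $s(F)\ge r(\mathcal{P})$.

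For equality in the Markov case only the reverse bound $N(F,\mathcal{P},m)\le\mathbf{1}^{\top}M^{m-1}\mathbf{1}$ remains. Here the Markov hypothesis is decisive: since $F(I_i)$ is a union of partition intervals, for each $j$ either $I_j\subset F(I_i)$, i.e. $m_{ij}=1$, or $\operatorname{int}(I_j)\cap F(I_i)=\emptyset$. Consequently, if $x\in\Gamma\setminus\bigcup_{i=0}^{m-1}F^{-i}(C)$ has itinerary $I_{j_0}\cdots I_{j_{m-1}}$, then $F^{k+1}(x)\in F(I_{j_k})\cap\operatorname{int}(I_{j_{k+1}})\neq\emptyset$ forces $I_{j_{k+1}}\subset F(I_{j_k})$, that is $m_{j_k,j_{k+1}}=1$, for every $k$. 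Hence each realized itinerary is an admissible path, giving the matching upper bound and therefore $s(F)=r(\mathcal{P})$.

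The main obstacle is the realization lemma together with the bookkeeping of turning points: one must verify that each $K_k$ is genuinely nondegenerate and that the finitely many removed points in $\bigcup_{i}F^{-i}(C)$ do not perturb the exponential growth rate, which is precisely why the comparison is asymptotic rather than an exact equality of counts. A secondary conceptual point, clarifying why only an inequality holds in general, is that for a non-Markov monopartition $F(I_i)$ may overlap an interval $I_j$ only partially; then $m_{ij}=0$ although points still pass from $I_i$ to $I_j$, so $N(F,\mathcal{P},m)$ can strictly exceed $\mathbf{1}^{\top}M^{m-1}\mathbf{1}$ and the equality breaks down.
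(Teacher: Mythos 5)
The paper never proves this lemma: it is stated as background and attributed to \cite{ALM}, so your argument can only be measured against the standard proof from that theory, which is essentially what you reconstruct. Your overall strategy is sound and matches that standard route: identify $s(F)$ with the limit $\lim_m\sqrt[m]{N(F,\mathcal{P},m)}$ of Lemma~\ref{grownumber}, count admissible paths by $\mathbf{1}^{\top}M^{m-1}\mathbf{1}$, convert this count into $r(\mathcal{P})$ via Gelfand's formula, prove the lower bound by realizing every admissible path as an itinerary, and prove the Markov upper bound by showing every realized itinerary is an admissible path. The backward pull-back construction of the intervals $K_k$, the Markov dichotomy (either $I_j\subset F(I_i)$ or $\operatorname{int}(I_j)\cap F(I_i)=\emptyset$), and the nilpotent degenerate case are all handled correctly.

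There is, however, one genuine flaw, and it sits exactly at the point you yourself call the main obstacle. You claim that $\bigcup_{i=0}^{m-1}F^{-i}(C)$ is a finite set. That is true only if every branch $F|_{I_i}$ is \emph{strictly} monotone. The monopartitions relevant to this paper unavoidably contain constant or partially constant branches: the plateaus are the heart of the whole construction (the maps $f$, $g_2$, $\varphi$, $\psi$ and the trapezoidal maps of \cite{BMT} all have flat pieces), and for such maps the preimage of a single turning point can contain whole intervals, so $\bigcup_{i=0}^{m-1}F^{-i}(C)$ can be uncountable. Consequently ``choose $x\in K_0$ outside a finite set'' is not a valid step: one must rule out that this bad set covers all of $K_0$. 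The missing idea is supplied by your own construction. Since $F(K_k)=K_{k+1}$ exactly, one has $F^{m-1}(K_0)=K_{m-1}=I_{j_{m-1}}$, a nondegenerate interval. If $K_0\subset\bigcup_{i=0}^{m-1}F^{-i}(C)$, then, because each $F^i|_{K_0}$ is continuous and monotone and $C$ is finite, $K_0$ would be covered by finitely many points together with finitely many nondegenerate closed intervals on each of which some $F^i$ with $1\le i\le m-1$ is constant with value in $C$; the closed intervals alone would then cover $K_0$, forcing $F^{m-1}(K_0)$ to be a finite set, a contradiction. With this repair the lower bound (and hence the whole proof) is complete; the Markov upper bound needs no change, since there the undefined itineraries are simply excluded from the count. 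As written, though, your argument only covers piecewise strictly monotone maps, which excludes precisely the maps to which the lemma is applied in this paper.
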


Therefore, for Markov matrices $M(F,\mathcal{P})$:
\[
h(F)=\ln\left(r(\mathcal{P})\right),
\]
where $r(\mathcal{P})$ is the spectral radius of the matrix.

To compute the spectral radius of the above mentioned Markov matrices, 
in Section \ref{s:entropyanalysis} we have used, repeatedly, the method developed in \cite{BGMY}. We briefly recall it. We construct an abstract oriented graph whose vertices are $I_1, \ldots, I_n$, and there is a directed edge from $I_i$ to $I_j$ if and only if $I_j \subset F(I_i).$ Next, we introduce the notion of a \emph{rome}.

\begin{defi}\label{d:roma}
	Let $M=(m_{ij})_{i,j=1}^n$ be an $n \times n$ matrix with $m_{i,j} \in \{0,1\}$. For a sequence $p=(p_j)_{j=0}^k$ of elements in $\{1,2,\ldots, n\}$, its width $w(p)$ is defined as $w(p) = \prod_{j=1}^k m_{p_{j-1}p_j}.$ The sequence $p$ is called a path if $w(p) \neq 0.$ In this case, $k = \mathit{l}(p)$ is the length of the path $p.$ A subset $R \subset \{1,2, \ldots, n\}$ is called a rome if there is no loop outside $R$, i.e., there is no path $(p_j)_{j=0}^k$ such that $p_0 = p_k$ and $(p_j)_{j=0}^k$ is disjoint from $R.$ For a rome $R$, a path $(p_j)_{j=0}^k$ is called simple if $\{p_0, p_k\} \subset R$ and $\{p_1, \ldots, p_{k-1}\}$ is disjoint from $R.$
\end{defi}

Informally, \emph{a rome} $R$ is a collection of vertices in the oriented graph such that any loop in the graph must pass through an element of $R$. Note that a path in the matrix associated with the oriented graph corresponds to a path in the graph itself. For a rome $R = \{r_1, \ldots, r_k\}$, where $r_i \neq r_j$ for $i \neq j$, we define a matrix function $A_R$ by $A_R = (a_{ij})_{i,j=1}^k$, where
\[
a_{ij}(\lambda) = \sum_{p} w(p) \, \lambda^{-\mathit{l}(p)},
\]
and the summation is taken over all simple paths originating at $r_i$ and terminating at $r_j$. Let $E$ denote the identity matrix (of the appropriate size).

\begin{teo}\label{rome} (See \cite{BGMY})
	Let $R = \{r_1, \ldots, r_k\}$ (with $r_i \neq r_j$ for $i \neq j$) be a rome. Then the characteristic polynomial of $M$ is given by 
	$
	(-1)^{n-k} \, \lambda^n \, \mathrm{det}(A_R(\lambda) - E).
	$
\end{teo}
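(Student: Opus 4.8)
The plan is to prove the identity by a block-matrix (Schur complement) computation, where the rome hypothesis enters precisely to force the nilpotency of one block. Since simultaneously permuting the rows and columns of $M$ changes neither its characteristic polynomial nor the collection of simple paths (hence not $A_R$), I would first relabel $\{1,\dots,n\}$ so that $R=\{1,\dots,k\}$ comes first and write
\[
M=\begin{pmatrix} B & C\\ D & N\end{pmatrix},
\]
where the $k\times k$ block $B$ records the edges inside $R$, the blocks $C$ and $D$ record the edges from $R$ to $Q:=\{k+1,\dots,n\}$ and back, and $N$ is the adjacency matrix of the induced subgraph on $Q$.

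The first key step is to observe that the definition of a rome---no loop disjoint from $R$---says exactly that the digraph on $Q$ has no cycles. Hence any walk in $Q$ of length $\ge n-k$ would repeat a vertex and produce such a cycle, so $N^{\,n-k}=0$; in particular $N$ is nilpotent, $\lambda E-N$ is invertible for every $\lambda\neq 0$, and $\det(\lambda E-N)=\lambda^{\,n-k}$.

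The second, and main, step is the combinatorial identification
\[
\lambda\,A_R(\lambda)=B+C\,(\lambda E-N)^{-1}D.
\]
To prove this I would expand $(\lambda E-N)^{-1}=\sum_{m\ge 0}\lambda^{-m-1}N^{m}$, a finite sum by nilpotency, and read off the matrix products combinatorially: the $(i,j)$ entry of $C N^{m}D$ equals the number of paths from $r_i$ to $r_j$ that leave $R$ in one step, make $m$ steps strictly inside $Q$ (automatically without repetition, since $Q$ is acyclic), and return to $R$ in one step---that is, the width-count of simple paths of length $m+2$ from $r_i$ to $r_j$---while the entry $b_{ij}$ of $B$ counts the simple paths of length $1$, the direct edges inside $R$. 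Attaching the correct power of $\lambda$ to each length and factoring out one $\lambda^{-1}$ then reproduces $a_{ij}(\lambda)=\sum_p w(p)\lambda^{-\ell(p)}$, which is the displayed identity. This bookkeeping of path lengths against powers of $\lambda$, together with the care needed for closed simple paths with $r_i=r_j$, is where I expect the real work to lie.

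Finally I would assemble the pieces. For $\lambda\neq 0$ the Schur complement formula gives
\[
\det(\lambda E-M)=\det(\lambda E-N)\,\det\!\big(\lambda E-B-C(\lambda E-N)^{-1}D\big)=\lambda^{\,n-k}\det\!\big(\lambda E-\lambda A_R(\lambda)\big).
\]
Pulling $\lambda$ out of the $k\times k$ determinant yields $\lambda^{\,n}\det\!\big(E-A_R(\lambda)\big)=(-1)^{k}\lambda^{\,n}\det\!\big(A_R(\lambda)-E\big)$, so the characteristic polynomial $\det(M-\lambda E)=(-1)^{n}\det(\lambda E-M)$ equals $(-1)^{n+k}\lambda^{\,n}\det(A_R(\lambda)-E)=(-1)^{n-k}\lambda^{\,n}\det(A_R(\lambda)-E)$, as claimed. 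Since both sides are rational functions of $\lambda$ agreeing on the infinite set $\{\lambda\neq 0\}$, the identity holds as rational functions, and hence at $\lambda=0$ as well.
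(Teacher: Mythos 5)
Your proof is correct. Bear in mind that the paper offers no proof of this theorem at all---it is quoted from \cite{BGMY}---so the only meaningful comparison is with the classical argument in that reference (reproduced also in \cite{ALM}), and your route is genuinely different from it. The classical proof is an induction on the number $n-k$ of vertices outside the rome: one adjoins a vertex $v\notin R$ to form the larger rome $R'=R\cup\{v\}$, observes that the rome condition forces $a^{R'}_{vv}=0$ (a simple $R'$-path from $v$ to $v$ would be a loop outside $R$), uses the decomposition $a^{R}_{ij}=a^{R'}_{ij}+a^{R'}_{iv}\,a^{R'}_{vj}$ of simple paths according to whether they pass through $v$, and concludes $\det\bigl(A_{R'}(\lambda)-E\bigr)=-\det\bigl(A_{R}(\lambda)-E\bigr)$ by eliminating the $v$-row and $v$-column, which accounts for the sign $(-1)^{n-k}$. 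Your argument performs this elimination in a single block step instead of vertex by vertex: the rome hypothesis enters only through the acyclicity of the induced graph on $Q$, hence the nilpotency of $N$; the finite geometric expansion of $(\lambda E-N)^{-1}$ gives the identity $\lambda A_R(\lambda)=B+C(\lambda E-N)^{-1}D$ (your path bookkeeping is right, and the rome condition even guarantees that the walks counted by $CN^{m}D$ are automatically simple paths, so no overcounting can occur); and the Schur complement formula together with the conventions $\det(M-\lambda E)=(-1)^{n}\det(\lambda E-M)$ and $(-1)^{n+k}=(-1)^{n-k}$ yields the stated formula for $\lambda\neq 0$, hence as an identity of rational functions. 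What your version buys is a compact, self-contained linear-algebra proof obtained in one stroke; what the inductive proof buys is a purely combinatorial argument that never inverts a matrix or manipulates Laurent expansions.
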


The next lemma is used in Sections \ref{ss:alpha} and \ref{ss:beta}.

\begin{lem}\label{entroo}  If $f:X\longrightarrow X$ is a continuous map on a compact space, then $h(f^n) = n h(f).$
\end{lem}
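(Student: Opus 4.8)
The plan is to work from the open-cover definition of topological entropy (as in \cite{AKM}), which applies to any continuous self-map of a compact space. Recall that for a finite open cover $\mathcal{U}$ of $X$ one sets $H(\mathcal{U})=\log N(\mathcal{U})$, where $N(\mathcal{U})$ is the least cardinality of a subcover, and that for covers $\mathcal{U},\mathcal{V}$ the join is $\mathcal{U}\vee\mathcal{V}=\{U\cap V:\,U\in\mathcal{U},\,V\in\mathcal{V}\}$. Writing $\mathcal{U}^m_f=\bigvee_{i=0}^{m-1}f^{-i}\mathcal{U}$, the sequence $m\mapsto H(\mathcal{U}^m_f)$ is subadditive, so by Fekete's lemma the limit $h(f,\mathcal{U})=\lim_m \frac1m H(\mathcal{U}^m_f)$ exists, and $h(f)=\sup_{\mathcal{U}}h(f,\mathcal{U})$. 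I will rely on two standard facts: monotonicity, i.e. $h(f,\mathcal{U})\le h(f,\mathcal{V})$ whenever $\mathcal{V}$ refines $\mathcal{U}$; and the observation that the join $\bigvee_{i=0}^{n-1}f^{-i}\mathcal{U}$ always refines $\mathcal{U}$ (it contains the $i=0$ term $\mathcal{U}$ itself).

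The heart of the argument is a reindexing identity. Fix $n$ and, given a cover $\mathcal{U}$, set $\mathcal{V}=\bigvee_{i=0}^{n-1}f^{-i}\mathcal{U}$. Applying $(f^n)^{-j}$ and substituting $k=nj+i$ gives
\[
\bigvee_{j=0}^{m-1}(f^n)^{-j}\mathcal{V}=\bigvee_{j=0}^{m-1}\bigvee_{i=0}^{n-1}f^{-(nj+i)}\mathcal{U}=\bigvee_{k=0}^{nm-1}f^{-k}\mathcal{U}.
\]
Taking $H$ of both sides, dividing by $m$, and letting $m\to\infty$ yields $h(f^n,\mathcal{V})=n\,h(f,\mathcal{U})$, since $\frac1m H(\bigvee_{k=0}^{nm-1}f^{-k}\mathcal{U})=n\cdot\frac1{nm}H(\bigvee_{k=0}^{nm-1}f^{-k}\mathcal{U})\to n\,h(f,\mathcal{U})$.

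From this identity I would extract both inequalities. Taking suprema over $\mathcal{U}$ and using that $\mathcal{V}$ is a legitimate open cover, $h(f^n)\ge\sup_{\mathcal{U}}h(f^n,\mathcal{V})=n\,h(f)$. Conversely, for an arbitrary cover $\mathcal{W}$, monotonicity together with the fact that $\bigvee_{i=0}^{n-1}f^{-i}\mathcal{W}$ refines $\mathcal{W}$ gives
\[
h(f^n,\mathcal{W})\le h\Bigl(f^n,\textstyle\bigvee_{i=0}^{n-1}f^{-i}\mathcal{W}\Bigr)=n\,h(f,\mathcal{W})\le n\,h(f),
\]
and taking the supremum over $\mathcal{W}$ yields $h(f^n)\le n\,h(f)$. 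Combining the two bounds gives $h(f^n)=n\,h(f)$.

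The only genuinely delicate point is to manage the two suprema consistently: for the lower bound one must check that, as $\mathcal{U}$ ranges over all covers, the associated family $\{\mathcal{V}\}$ is rich enough to recover $h(f^n)$, while for the upper bound one exploits that every cover refines into the join form, so no single cover can exceed $n\,h(f)$. Both steps reduce to the elementary refinement-monotonicity of $h(f,\cdot)$, so there is no serious analytic obstacle beyond the reindexing identity and these supremum manipulations. If $X$ is assumed to be a compact metric space, the identical argument runs with Bowen--Dinaburg $(n,\varepsilon)$-separated sets in place of open covers, but the cover formulation is cleaner and matches the generality of the statement.
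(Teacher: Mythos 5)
Your proof is correct and complete: the reindexing identity $\bigvee_{j=0}^{m-1}(f^n)^{-j}\bigl(\bigvee_{i=0}^{n-1}f^{-i}\mathcal{U}\bigr)=\bigvee_{k=0}^{nm-1}f^{-k}\mathcal{U}$, combined with refinement-monotonicity of $h(f^n,\cdot)$, is exactly what is needed, and both suprema are handled properly (the lower bound only needs each $\mathcal{V}$ to be an admissible open cover; the upper bound only needs monotonicity, so your closing worry about "richness" is moot). For comparison: the paper does not prove this lemma at all — it is quoted as a classical fact (it goes back to Adler--Konheim--McAndrew, cited as \cite{AKM}, and appears in \cite{ALM}), and is used only to relate the entropy of $F$ to that of the return maps $F^6|_{\Pi}$ and $F^7|_{\Sigma}$ in Sections \ref{ss:alpha} and \ref{ss:beta}. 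What you have written is the standard open-cover proof from that literature, so there is nothing to reconcile; the only point worth noting is that your argument, phrased with covers rather than $(n,\varepsilon)$-separated sets, matches the stated generality (compact space, no metric), which is the right choice here.
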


\subsection{A useful conjugation and an observation}\label{s:s2}

The family of maps $F_{a,b}$ has only one essential parameter, since for any $\lambda>0,$  \begin{equation}\label{conj}\lambda F_{a,b}(x/\lambda,y/\lambda)=F_{\lambda a,\lambda b}(x,y).\end{equation} 
This equality implies that for any $a,b\in \R^2$ and for any $\lambda>0$ the maps $F_{\lambda a,\lambda b}$ and $F_{a,b}$ are conjugate. Hence in our proofs we can restrict our attention to three cases $a\in\{-1,0,1\}.$

In consequence, to study the case $a<0$ it suffices to consider the case $a=-1$ by using the conjugation \eqref{conj}. The conjugation can be used afterwards to obtain the results for $F_{a,b}$ from its corresponding map $F_{-1,-b/a}.$

We end this section with a preliminary observation. 
For $i=1,2,3,4,$ we denote by $F_i$ the expression of the affine map $F$ restricted to each one of the quadrants $Q_1=\{(x,y): x\geq 0, y\geq 0\},Q_2=\{(x,y): x\leq 0, y\geq 0\},Q_3=\{(x,y): x\leq 0, y\leq 0\}$ and $Q_4=\{(x,y): x\geq 0, y\leq 0\},$ 
that is,
\begin{equation}\label{e:Fis}
\begin{array}{ll}
	F_1(x,y)=(x-y+a,x-y+b),\,&F_2(x,y)=(-x-y+a,x-y+b),\\
	F_3(x,y)=(-x-y+a,x+y+b),\,&
	F_4(x,y)=(x-y+a,x+y+b).
	\end{array}
\end{equation}
We see that the straight lines of slope $1$ contained in $Q_1$ and also the straight lines of slope $-1$ contained in $Q_3$ collapse to a point. For reasons that are explained in \cite{CGMM}, which are related to the terminology in  \cite{BMT}, we call these intervals \emph{plateaus}. Hence,  when calculating the entropy of the maps $F|_\Gamma$, where $\Gamma$ is the graph that appears in Theorems \ref{t:teoB} and \ref{t:teoD}, the intervals of the associated auxiliary oriented graph which represent the preimages of these points  will not be considered.

\section{Proof of Theorem~\ref{t:teocont}}\label{s:entropyanalysis}

By using the conjugation \eqref{conj}, in order to study the case $a<0$ and $4<c<8$, we can take  $a=-1$ and  $4<b<8$. In this section we give a detailed analysis of entropy behavior for this range of the parameters. According to Theorem \ref{t:teoD}, the map has positive entropy for this range of parameters, and is zero for $b\geq 8$.  The main result of this section is  Theorem~\ref{t:teocont}.

This theorem is a consequence of 
Proposition~\ref{p:teo1}, given below, which either characterizes or bounds the entropy $F|_{\Gamma}$ in a certain partition of the parameter's interval $(4,8)$, giving a more accurate description than the one in Theorem \ref{t:teoD}. In particular,  this proposition shows that $\lim\limits_{b\to 8^-} h_F(b)=0$. Since, from  Theorem \ref{t:teoD}, $h_F(b)=0$ for $b\geq 8$, the result follows.

 This section will be devoted, almost entirely, to demonstrating Proposition~\ref{p:teo1}. To state the proposition we consider the following partition of the parameter's interval: 
$$(4,8)=\bigcup_{n=0}^{\infty} (p_{n-1},p_n]=\bigcup_{n=0}^{\infty}\left(S_n\cup T_n\cup U_n\cup V_n\right),$$
with
$$S_n=(p_{n-1},s_n),\,T_n=[s_n,r_n],\,U_n=(r_n,q_n),\,V_n=[q_n,p_n],$$
where 
\begin{equation}\label{e:rec}
p_n=\frac{4(4\cdot 4^{n+1}-1)}{2\cdot 4^{n+1}+1}\,\,\,,\,q_n=\frac{8\cdot 4^{n+1}+1}{4^{n+1}+2}\,,\,\,\,r_n=\frac{16\cdot 4^{n+1}-1}{2\cdot 4^{n+1}+4}\,,\,\,\,s_n=\frac{2(4\cdot 4^{n+2}-1)}{4^{n+2}+11},
\end{equation}
and $p_{-1}=4$. 
Notice that that for any $n\in\N\cup\{0\}$:
$p_{n-1}\,<\,s_n\,<\,r_n\,<\,q_n\,<\,p_n.$

 The following result improves Proposition 39 of \cite{CGMM}, where only the positivity of the entropy is established for these parameter values.
\begin{propo}\label{p:teo1}
	Assume that $a=-1$ and $4<b<8$ and let $h_F(b)$ be the entropy of the map $F|_{\Gamma}$. Then the entropy is always positive and,  furthermore, the following holds:
	\begin{enumerate}[(a)]
		\item  If $b\in S_n$, then  $\ln(\alpha_n)\leq h_F(b)\leq \ln(\beta_n),$ where $\alpha_n$  and $\beta_n$ are the unique positive roots of the polynomials
		$P_{\alpha,n}(\lambda)=\lambda^{7+3n}-\lambda^{4+3n}-1$
and  $P_{\beta,n}(\lambda)=\lambda^{7+3n}-\lambda^{4+3n}-\lambda^3-2$ respectively.
		\item  If $b\in T_n$, then  $h_F(b)=\ln(\delta_n),$ where $\delta_n$  is the unique positive root of the polynomial $P_{\delta,n}(\lambda)=\lambda^{7+3n}-\lambda^{4+3n}-2.$
		\item  If $b\in U_n$, then  $\ln(\alpha_n)\leq h_F(b)\leq \ln(\gamma_n)$, where $\gamma_n$ is the unique positive root of   $P_{\gamma,n}(\lambda)=\lambda^{10+3n}-\lambda^{7+3n}-2\lambda^3-1$. 
		\item  If $b\in V_n$, then  $h_F(b)=\ln(\varphi_n),$ where $\varphi_n$ is the unique positive root of  $P_{\varphi,n}(\lambda)=\lambda^{10+3n}-\lambda^{7+3n}-\lambda^3-1$. 
		\end{enumerate}	
		Furthermore, 
\begin{equation}\label{e:relpositions}
1<\alpha_n<\varphi_n<\delta_n<\gamma_n<\beta_n,
\end{equation}
		and each one the five sequences $\alpha_n,\beta_n,\gamma_n,\delta_n, \varphi_n$ is decreasing and tends to $1$ as $n$ tends to infinity, hence the entropy of $F|_{\Gamma}$ when $b$ belongs to the parameter's sets $S_n$, $T_n$ $U_n$ and $V_n$
		tends to $0$ as $n$ tends to infinity (see Figure \ref{f:fig2}). In consequence,
$$\lim\limits_{b\to 8^-} h_F(b)=0.$$
\end{propo}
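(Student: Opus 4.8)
The plan is to prove the four quantitative statements (a)--(d) by constructing, for each of the four subinterval types, a partition of the invariant graph $\Gamma = \Gamma_{-1,b}$ adapted to $F|_{\Gamma}$, and then reading the entropy, or bounds for it, from the spectral radius of the associated transition matrix via Lemma~\ref{Markov} and the rome method of Theorem~\ref{rome}. For the closed parameter intervals $T_n$ and $V_n$ I would exhibit a genuine \emph{Markov} partition, so that $h_F(b) = \ln r(\mathcal{P})$ holds exactly, with $\delta_n$ (resp. $\varphi_n$) as its spectral radius. For the open intervals $S_n$ and $U_n$ the turning-point orbit is infinite and the partition is only a monopartition; there the lower bounds $\ln\alpha_n$ follow immediately from the universal inequality $r(\mathcal{P}) \le s(F)$ of Lemma~\ref{Markov}, while the upper bounds $\ln\beta_n$ and $\ln\gamma_n$ follow by comparison with a dominating Markov map and the monotonicity of the topological entropy with respect to the kneading data (see \cite{ALM}). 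In each case the relevant polynomial $P_{\bullet,n}$ arises, up to the normalizing factor $(-1)^{n-k}\lambda^{n}$ of Theorem~\ref{rome}, as the characteristic polynomial of the transition matrix, and its unique positive root is the corresponding spectral radius.

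The combinatorial heart of the argument — and the step I expect to be the main obstacle — is to describe precisely the graph $\Gamma_{-1,b}$ and the action of $F$ on it for $4 < b < 8$, as catalogued in \cite{CGMM}, and to isolate the self-similar (renormalization) mechanism that accounts for the uniform shift $3n$ in all the exponents. I expect that raising $n$ by one inserts a single length-three loop into the associated oriented graph, and that the four distinguished parameters $s_n, r_n, q_n, p_n$ of \eqref{e:rec} are exactly the values at which a turning point is mapped onto another turning point, i.e. where the Markov condition closes up; checking these incidence relations is the delicate bookkeeping underlying (a)--(d). Once the oriented graph and a rome of small cardinality are fixed, the computation of $\det(A_R(\lambda) - E)$ is routine and yields the listed polynomials.

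The ordering \eqref{e:relpositions} I would establish analytically, by rewriting each defining equation in the common form $\lambda^{3} - 1 = R_{\bullet}(\lambda)$, where on $(1,\infty)$ the function $R_{\bullet}$ is a positive, strictly decreasing combination of negative powers of $\lambda$: explicitly $R_{\alpha} = \lambda^{-(4+3n)}$, $R_{\varphi} = \lambda^{-(4+3n)} + \lambda^{-(7+3n)}$, $R_{\delta} = 2\lambda^{-(4+3n)}$, $R_{\gamma} = 2\lambda^{-(4+3n)} + \lambda^{-(7+3n)}$ and $R_{\beta} = \lambda^{-(1+3n)} + 2\lambda^{-(4+3n)}$. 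Since $\lambda \mapsto \lambda^{3}-1$ is strictly increasing and each $R_{\bullet}$ strictly decreasing, each polynomial has a unique root larger than $1$ (this also gives the positivity of the entropy), and the pointwise inequalities $R_{\alpha} < R_{\varphi} < R_{\delta} < R_{\gamma} < R_{\beta}$ on $(1,\infty)$ — each a one-line comparison of the corresponding negative powers — move the crossing point with $\lambda^{3}-1$ to the right, yielding $1 < \alpha_n < \varphi_n < \delta_n < \gamma_n < \beta_n$.

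Finally, the monotonicity and the limit follow by arguing directly on the polynomials. Writing each as $P_{\bullet,n}(\lambda) = \lambda^{\,j+3n}(\lambda^{3}-1) - C_{\bullet}(\lambda)$ with $C_{\bullet}$ a bounded nonnegative term, one sees that for each fixed $\lambda > 1$ the value $P_{\bullet,n}(\lambda)$ is strictly increasing in $n$, which forces the root to be decreasing in $n$; and for any fixed $\lambda_0 > 1$ the leading term gives $P_{\bullet,n}(\lambda_0) \to +\infty$, so the root eventually lies below $\lambda_0$. As $\lambda_0 > 1$ is arbitrary and all roots exceed $1$, the five sequences decrease to $1$. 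Combining this with (a)--(d) gives the uniform bound $\ln\alpha_n \le h_F(b) \le \ln\beta_n$ for every $b \in (p_{n-1}, p_n]$; since $8 - p_n = 12/(2\cdot 4^{n+1} + 1) \to 0^{+}$ while $\ln\alpha_n, \ln\beta_n \to 0$, the squeeze theorem yields $\lim_{b\to 8^{-}} h_F(b) = 0$, as claimed.
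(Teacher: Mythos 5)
Your strategy is the paper's strategy: Markov partitions plus the rome method of Theorem~\ref{rome} on the intervals $T_n,V_n$; on $S_n,U_n$ a lower bound from the plain transition matrix via Lemma~\ref{Markov} and an upper bound from a dominating Markov graph. Moreover, the analytic half of your argument is correct and complete: rewriting each equation as $\lambda^3-1=R_\bullet(\lambda)$ with $R_\bullet$ positive and strictly decreasing on $(1,\infty)$ does give uniqueness of the root beyond $1$ (hence positivity of the entropy once (a)--(d) are known), your pointwise comparisons $R_\alpha<R_\varphi<R_\delta<R_\gamma<R_\beta$ are each correct and yield \eqref{e:relpositions}, the monotonicity in $n$ forces the roots to decrease to $1$, and the squeeze using $8-p_n=12/(2\cdot 4^{n+1}+1)$ finishes the limit. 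This is essentially the paper's own computation (intersecting $\lambda^{3n}$ with the functions $g_\bullet$) in a slightly different normalization, and in fact you treat all five sequences at once where the paper reduces to $\beta_n$ via \eqref{e:relpositions}.

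The genuine gap is that statements (a)--(d) themselves --- which polynomial occurs on which parameter set, and why the exponents shift by exactly $3n$ --- are precisely what you label ``the delicate bookkeeping'' and then leave as an expectation; this is the bulk of the paper's proof and it cannot be bypassed. Concretely, the paper (i) computes $F^3(x,-1)=(4x+b-2,-1)$, so that $\Gamma\cap\{y=-1\}$ is $F^3$-invariant and the critical orbit starting at $x_0=b-10$ is explicitly $x_n=\frac{(b-8)4^{n+1}+2-b}{3}$; (ii) obtains $p_n,q_n,r_n,s_n$ of \eqref{e:rec} by solving $(x_n,-1)\in\{X_2,X_3,X_4,X_6\}$ --- this confirms your guess that the bifurcations are turning-point incidences, but it has to be computed, since the partition of $(4,8)$ must be shown to match the dynamical cases; (iii) introduces the level-$n$ interval families $F_i,J_i,C_i$ and proves the structural observation that the oriented graph always contains the $n$ chains $C_i\to F_i\to J_i$ between $K$ and $C_1$, which is what inserts $+3n$ into every loop length (your ``one length-three loop per level''); and (iv) determines in each of the four cases how $F_1$ and $F_2$ cover $H,I,J_2,K$ according to whether $(x_n,-1)$ lies in $\overline{P_3X_6}$, $\overline{X_4X_6}$, $\overline{X_3X_4}$ or $\overline{X_2X_3}$, from which the rome $\{G\}$ yields the five stated polynomials. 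Without (i)--(iv) none of the claimed exponents, loop structures, or even the case distinction is established. A secondary weakness: your upper bound on $S_n,U_n$ invokes ``monotonicity of the entropy with respect to the kneading data,'' a tool for interval maps that would itself need justification for a map of a graph; the paper instead makes the dominating Markov comparison concrete by adding the dashed arrows (the \emph{expanded directed graph}) and bounding by its spectral radius.
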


The first cases given by Proposition \ref{p:teo1} are summarized in Table \ref{tab:taula 1} and Figure \ref{f:fig2}.

\begin{table}[ht]
\centering
\begin{tabular}{|c|c|c|}
\hline
{Set} & {$b$} & Exact entropy or bounds\\ \hline
$S_0$ & $\left(4,\frac{14}{3}\right)$ & $\left[0.14717,0.28888\right]$ \\ \hline
$T_0$ & $\left[\frac{14}{3},\frac{21}{4}\right]$ & $0.20844$ \\ \hline
$U_0$ & $\left(\frac{21}{4},\frac{11}{2}\right)$ & $\left[0.14717,023031\right]$ \\ \hline
$V_0$ & $\left[\frac{11}{2},\frac{20}{3}\right]$ & $0.18600$ \\ \hline

$S_1$ & $\left(\frac{20}{3},\frac{34}{5}\right)$ & $\left[0.11977,0.21132\right]$ \\ \hline
$T_1$ & $\left[\frac{34}{5},\frac{85}{12}\right]$ & $ 0.16389$ \\ \hline
$U_1$ & $\left(\frac{85}{12},\frac{43}{6}\right)$ & $\left[0.11977,0.18155\right]$ \\ \hline
$V_1$ & $\left[\frac{43}{6},\frac{84}{11}\right]$ & $0.15051$ \\ \hline

$S_2$ & $\left(\frac{84}{11},\frac{682}{89}\right)$ & $\left[0.10238,0.17042\right]$ \\ \hline
$T_2$ & $\left[\frac{682}{89},\frac{31}{4}\right]$ & $0.13698$ \\ \hline
$U_2$ & $\left(\frac{31}{4},\frac{171}{22}\right)$ & $\left[0.10238,0.15186\right]$ \\ \hline
$V_2$ & $\left[\frac{171}{22},\frac{340}{43}\right]$ & $0.12795$ \\ \hline
\end{tabular}
\caption{First sets of the partition of the parameter interval $(4,8)$ and exact value (with five significant digits) or bounds of the entropy according to Proposition \ref{p:teo1}.}
\label{tab:taula 1}
\end{table}

\begin{figure}[h]
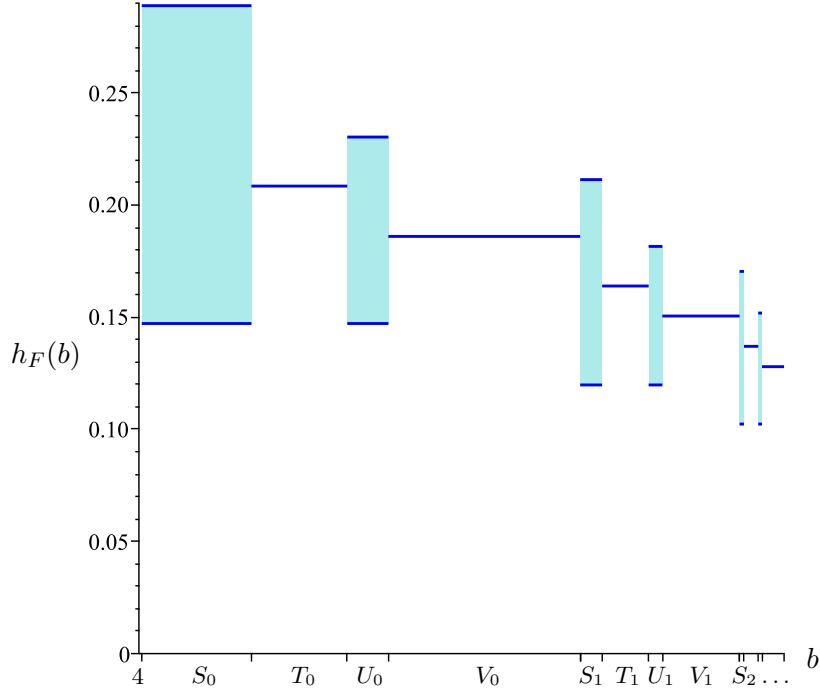

	%\footnotesize
	\centering
	
	\begin{lpic}[l(2mm),r(2mm),t(2mm),b(2mm)]{gr2b2(0.49)}
		
		\lbl[c]{-8,97; $h_F(b)$}
		\lbl[c]{200,15; $b$}
		
		\lbl[c]{17,10; {\footnotesize $4$}}
		\lbl[c]{35,10; {\footnotesize $S_0$}}
		\lbl[c]{62,10; {\footnotesize $T_0$}}
		\lbl[c]{80,10; {\footnotesize $U_0$}}
		\lbl[c]{112,10; {\footnotesize $V_0$}}
		
		\lbl[c]{140,10; {\footnotesize $S_1$}}
		\lbl[c]{150,10; {\footnotesize $T_1$}}
		\lbl[c]{159,10; {\footnotesize $U_1$}}
		\lbl[c]{170,10; {\footnotesize $V_1$}}
		
		\lbl[l]{178,10; {\footnotesize $S_2\ldots$}}
		%\lbl[c]{181,10; $T_2$}
		%\lbl[c]{186,10; $U_2$}
		%\lbl[c]{190,10; $V_2$}
		
	\end{lpic}
	
	\caption{First sets of the partition of  $(4,8)$, and exact value or bounds of the entropy according to Proposition \ref{p:teo1}. In light blue, the bounding interval in the sets $S_i$ and $U_i$.}\label{f:fig2}
\end{figure}

\subsection{Invariant graph $\Gamma$ and a first glimpse on the dynamics of $F|_{\Gamma}$}\label{ss:prelimgamma}

Set $a=-1$ and $4<b<8$. From Theorem \ref{t:teoB} and \cite[Table 1]{CGMM} we know that there exists a graph $\Gamma$, invariant by $F$, that for all $(x,y)\in\mathbb{R}^2,$ $F^5(x,y)\in\Gamma$. The graph $\Gamma$ which captures the dynamics of 
$F$ for this range of parameters is given in Figure~\ref{f:esp3}. The points appearing there are: $P_{1}=(0, b +1)$, $P_{2}=(-b -1, 0)$, $P_{3}=(-b -2, -1)$, $P_{4}=(0, -1)$, $P_{5}=(b, -1)$, $P_{6}=(b +2, -3)$, $P_{7}=(
b -1, 0)$, $P_{8}=(0, b -1)$, $P_{9}=(b +4, 2 b -1)$, $P_{10}=(b, 2 b -1)$, $P_{11}=(b -2, 2 b -1)$, $P_{12}=(
-b +4, 5)$, $P_{13}=(-b, 1)$, $P_{14}=(b -2, -1)$, $P_{15}=(b -2, 2 b -3)$, $P_{16}=\left(b -1, 2 b -1\right)$,  $P_{17}=\left(
-\frac{b}{2}-1, \frac{b}{2}\right)$, $P_{18}=\left(\frac{3 b}{2}-1, 2 b -1
\right)$,  $X_1=\left(-\frac{b}{4}+\frac{1}{2}, -1\right)$,  $X_2=\left(-\frac{b}{2}
, -1\right)$,  $X_3=\left(-b +1, -1\right)$,  $X_4=\left(-b +\frac{1}{2}, -1\right)$, $X_5=\left(-b, -1\right)$, 
$X_6=\left(-\frac{5 b}{4}+\frac{1}{2}, -1\right)$,  $Y_1=\left(
\frac{b}{4}-\frac{1}{2}, \frac{3 b}{4}-\frac{1}{2}\right)$,  $Y_2=\left(
\frac{b}{2}, \frac{b}{2}-1\right)$,  $Y_4=\left(b -\frac{1}{2}, -{\frac{1}{2}
}\right)$,  $Y_6=\left(\frac{5 b}{4}-\frac{1}{2}, -\frac{b}{4}-\frac{1}{2}
\right)$.

	 \begin{figure}[ht]
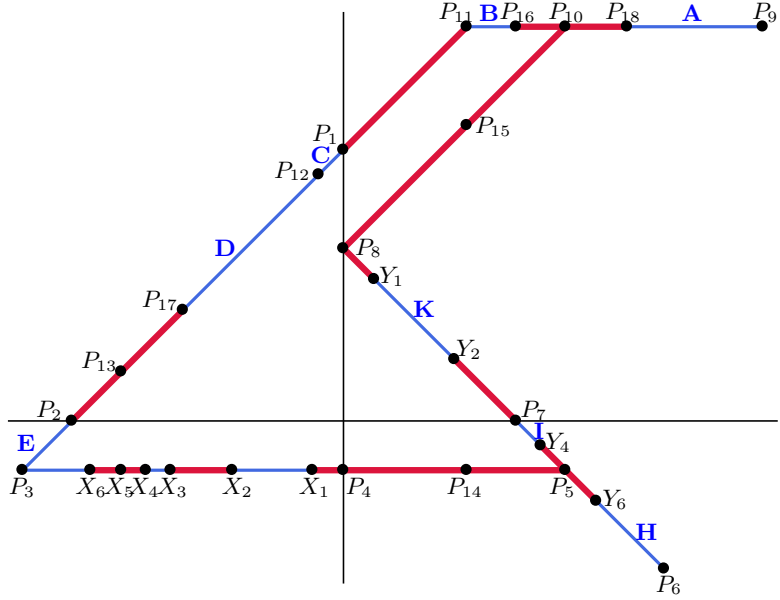
\label{graficespecial3}
	 	\footnotesize
	 	\centering
	 	
	 	\begin{lpic}[l(2mm),r(2mm),t(2mm),b(2mm)]{am-especial-3b(0.54)}
	 		\lbl[l]{3,185; $\boxed{a=-1,\,4< b<8}$}

	 		\lbl[r]{85,138; $P_1$}
	 		
	 		\lbl[r]{83,133; ${\color{blue}\mathbf{C}}$}

	 		\lbl[r]{17,70; $P_2$}
	 		
	 		\lbl[c]{8,62; ${\color{blue} \mathbf{E}}$}
	 		
	 		\lbl[c]{7,51; $P_3$}
	 		
	 		\lbl[c]{90,51; $P_4$}
	 		
	 		\lbl[c]{79,51; $X_1$}
	 		
	 		\lbl[c]{60,51; $X_2$}
	 		
	 		\lbl[c]{44,51; $X_3$}
	 		
	 		\lbl[c]{37,51; $X_4$}
	 		
	 		\lbl[c]{31,51; $X_5$}
	 		
	 		\lbl[c]{24,51; $X_6$}
	 		
	 		\lbl[c]{140,51; $P_5$}

	 		\lbl[l]{163,27; $P_6$}
	 		
	 		\lbl[l]{89,110; $P_8$}
	 		
	 		\lbl[c]{133,70; $P_7$}

	 		\lbl[c]{190,168;  $P_9$}
	 		
	 		\lbl[c]{141,168; $P_{10}$}
	 		
	 		\lbl[c]{114,168;  $P_{11}$}
	 		
	 		\lbl[c]{122,168;  ${\color{blue} \mathbf{B}}$}
	 		
	 		\lbl[r]{78,129; $P_{12}$}
	 		
	 		\lbl[c]{57,110; ${\color{blue} \mathbf{D}}$}
	 		
	 		\lbl[c]{26,82; $P_{13}$}
	 		
	 		\lbl[c]{116,51; $P_{14}$}
	 		
	 		\lbl[c]{123,140; $P_{15}$}

	 		\lbl[c]{129,168;  $P_{16}$}
	 		
	 		\lbl[c]{41,97;  $P_{17}$}
	 		
	 		\lbl[c]{155,168;  $P_{18}$}
	 		
	 		\lbl[c]{172,168;  ${\color{blue}\mathbf{A}}$}

	 		\lbl[c]{98,103;  $Y_1$}
	 		
	 		\lbl[c]{106,95;  ${\color{blue}\mathbf{K}}$}
	 		
	 		\lbl[c]{117,85;  $Y_2$}
	 		
	 		\lbl[l]{133,65;  ${\color{blue}\mathbf{I}}$}
	 		
	 		\lbl[c]{139,62;  $Y_4$}
	 		
	 		\lbl[c]{153,48;  $Y_6$}
	 		
	 		\lbl[l]{158,40;  ${\color{blue} \mathbf{H}}$}
	 	\end{lpic}
	 	
	 	\caption{The graph $\Gamma$ for $a=-1$ and $4<b<8$.}\label{f:esp3}
	 \end{figure} 
	 
In Figure~\ref{f:esp3}, the points $P_i$ describe the basic elements of the graph: vertices and intersections with the axes. In order to describe the dynamics, we need to introduce some extra points, which are also shown in this figure.

\begin{itemize}
	\item The preimages of $P_1=(0,b+1):$
	$$X_2:=\left(\frac{-b}{2},-1\right)\rightarrow Y_2:=\left(\frac{b}{2},\frac{b-2}{2}\right)\rightarrow P_1.$$
	\item The preimages of $P_2=(-b-1,0):$
	$$X_4:=\left(\frac{1-2b}{2},-1\right)\rightarrow Y_4:=\left(\frac{2b-1}{2},\frac{-1}{2}\right)\rightarrow P_{16}:=(b-1,2b-1)\rightarrow P_2.$$

		\item Concerning the preimages of $P_4=(0,-1),$ we consider $P_{17}:=\left(-\frac{2+b}{2},\frac{b}{2}\right)\rightarrow P_4.$ We have that the straight line $y=x+b/2$ with $x>0$ collapses to $P_{17},$ and it contains two points on the graph, $Y_1:=\left(\frac{b-2}{4},\frac{3b-2}{4}\right)$ and $P_{18}:=\left(\frac{3b-2}{2},2b-1\right).$ Now:
	$$X_1:=\left(\frac{2-b}{4},-1\right)\rightarrow Y_1\rightarrow P_{17}\rightarrow P_4.$$
	 and
	 $$X_6:=\left(\frac{2-5b}{4},-1\right)\rightarrow Y_6:=\left(\frac{5b-2}{4},-\frac{b+2}{4}\right)\rightarrow P_{18}\rightarrow P_{17}\rightarrow P_4.$$

	\item The preimage of $P_5=(b,-1):$ $X_5:=(-b,-1)\rightarrow P_5$.	 
	 
	\item The preimage of $P_7=(b-1,0):$ $X_3:=(1-b,-1)\rightarrow P_7$.
\end{itemize}

	 The intervals that collapse are explicitly displayed in red color in Figure \ref{f:esp3}. We begin by naming: $A:=\overline{P_{9}P_{18}}$, $B:=\overline{P_{11}P_{16}}$, $C:=\overline{P_{1}P_{12}}$, $D:=\overline{P_{12}P_{17}}$, $E:=\overline{P_{2}P_{3}}$, $F_1:=\overline{P_{3}(x_0,-1)}$, $G:=\overline{(x_0,-1)P_{4}}$, $H:=\overline{P_{6}Y_{6}}$, $I:=\overline{Y_{4}P_{7}}$, $K:=\overline{Y_{2}Y_{1}},$ where $(x_0,-1):=F(P_{12})=(b-10,-1)$.
	 
	The images of these intervals can be easily found, except for the intervals $F_1$ and $G.$ For these intervals,  we have to consider five different cases, depending on the location of the point $(x_0,-1)$ defined above.

\textbf{Case 1:} For $4<b<14/3$, $(x_0,-1)\in \overline{P_{3}X_{6}}$. This case corresponds with $b\in S_0=(p_{-1},s_0)$ using the notation in \eqref{e:rec}. In this case, the interval $G$ covers $K,I$ and also a part of $H.$ The interval $F_1$ just covers the other part of $H.$ Hence the corresponding oriented graph is not of Markov type. 
\emph{For this reason we plot dashed arrows departing from $F_1,G$ to $H$ in the oriented graph below}, and we will follow this graphic convention in the following. The technique of considering the \emph{expanded directed graph}, depicted with dashed lines, and calculating its entropy as if it were a Markov graph, allows us to bound the entropy from above.

Hence, by adding a dashed arrow from $G$ to $H$ and another from $F_1$ to $H$, we get:

	\begin{center} \begin{tikzcd}
		&             &             &             & K \arrow[d] \arrow[r] & C \arrow[r]                                                & F_1 \arrow[lllld, dashed, bend right] \\
		B \arrow[r] & E \arrow[r] & H \arrow[r] & A \arrow[r] & D \arrow[r]           & G \arrow[lu] \arrow[llllld] \arrow[lll, dashed, bend left] &                                       \\
		I \arrow[u] &             &             &             &                       &                                                            &                                      
	 \end{tikzcd} \end{center}

We see that this graph has two loops with length $3$ and $7$ respectively, while the extended graph  (the one with dashed lines) has four loops of length $3,4,7$ and $7$. 

We can obtain bounds for the entropy $h_F(b)$, in this case, by using the \emph{romes' method}, introduced in \cite{BGMY} and summarized in Theorem \ref{rome} of the previous section. According to Definition~\ref{d:roma}, the segment $G$ is a rome of both directed graphs. The function $A_R$, in this case, is simply
$$
A_R(\lambda)=\frac{1}{\lambda^3}+\frac{1}{\lambda^7},
$$ hence, using  Theorem \ref{rome}, 
the characteristic polynomial of the Matrix associated with the first directed graph, whose entropy bounds $h_F(b)$ from below, is
$$(-1)^9\lambda^{10}\det(A_R(\lambda)-1)=
(-1)^9\lambda^{10}\left(\frac{1}{\lambda^3}+\frac{1}{\lambda^7}-1\right)=\lambda^3(\lambda^7-\lambda^4-1).
$$
Observe that the useful information of the characteristic polynomial is given by the term $\lambda^7-\lambda^4-1$ which corresponds with the polynomial $P_{\alpha,0}(\lambda)$ defined in the statement (a) of Proposition \ref{p:teo1}. \emph{In this work, abusing notation, we will call the characteristic polynomial only the relevant term of the authentic characteristic polynomial, that is, a term that gives rise to the largest positive root.} Proceeding in the same way, we get that the  function $A_R(\lambda)$ of the directed graph with dashed lines, that helps us to bound from above the entropy, is 
$$
A_R(\lambda)=\frac{1}{\lambda^3}+\frac{1}{\lambda^4}+\frac{2}{\lambda^7},
$$ hence,
the characteristic polynomial is 
$$P_{\beta,0}(\lambda)=\lambda^{7}-\lambda^{4}-\lambda^3 -2.
$$
Using Descartes' rule of signs, both polynomials $P_{\alpha,0}$ and $P_{\beta,0}$ have a unique positive root given by 
$\alpha_0\approx 1.15855$ and $\beta_0\approx 1.33493$, hence for this range of the parameter $b$ we obtain
$$
0.14717\approx \ln(\alpha_0)\leq h_F(b)\leq \ln(\beta_0)\approx 0.28888.
$$

\textbf{Case 2:} For $14/3\leq b\leq 21/4,$ i.e. $b\in T_0=[s_0,r_0]$, then $(x_0,-1)\in \overline{X_{4}X_{6}}.$ Now the interval $G$ covers exactly  $I\cup K$ and $F_1$ covers exactly $H.$ The corresponding oriented graph is now of Markov type:

\begin{center} \begin{tikzcd}
	&             &             &             & K \arrow[d] \arrow[r] & C \arrow[r]                 & F_1 \arrow[lllld, bend right] \\
	B \arrow[r] & E \arrow[r] & H \arrow[r] & A \arrow[r] & D \arrow[r]           & G \arrow[lu] \arrow[llllld] &                               \\
	I \arrow[u] &             &             &             &                       &                             &                              
 \end{tikzcd} 
\end{center}

We see that this graph has three loops of length $3,7$ and $7$, hence  $A_R(\lambda)=\frac{1}{\lambda^3}+\frac{2}{\lambda^7}$. The characteristic polynomial is 
$P_{\delta,0}(\lambda)=\lambda^7-\lambda^4-2$, whose unique positive root is $\delta_0\approx 1.23175$, hence $h_F(b)=\ln(\delta_0)\approx 0.20844$.

\textbf{Case 3:} For $21/4<b<11/2,$ then $(x_0,-1)\in \overline{X_{3}X_{4}}$. This case corresponds with $b\in U_0=(r_0,q_0)$. For these values, $F_1$ covers $H$ and a part of $I$ while $G$ covers $K$ and the other part of $I:$

\begin{center} 
\begin{tikzcd}
	&             &             &             & K \arrow[d] \arrow[r] & C \arrow[r]                         & F_1 \arrow[lllld, bend right] \arrow[lllllldd, dashed, bend left] \\
	B \arrow[r] & E \arrow[r] & H \arrow[r] & A \arrow[r] & D \arrow[r]           & G \arrow[lu] \arrow[llllld, dashed] &                                                                   \\
	I \arrow[u] &             &             &             &                       &                                     &                                                                  
 \end{tikzcd} 
 \end{center}

The graph has two loops of length $3,7$ while the extended graph has four loops of length $3,7,7$ and $10.$ Following similar arguments than the ones in the previous cases we get that the characteristic polynomial of the incidence matrix of the first graph is $P_{\alpha,n}$ and the one associated with the extended graph is $P_{\gamma,n}(\lambda)=\lambda^{10}-\lambda^{7}-2 \lambda^{3}-1$, whose unique positive root is 
$\gamma_0\approx 1.25898$. Hence:
$$
0.14717\approx \ln(\alpha_0)\leq h_F(b)\leq \ln(\gamma_0)\approx 0.23031.
$$

\textbf{Case 4:} For $11/2\leq b\leq 20/3,$ then $(x_0,-1)\in \overline{X_{2}X_{3}}$. Now $F_1$ exactly covers $I\cup H$ and $G$ exactly covers $K:$

%\vspace{0.3cm}

\begin{center} 
\begin{tikzcd}
	&             &             &             & K \arrow[d] \arrow[r] & C \arrow[r]  & F_1 \arrow[lllld, bend right] \arrow[lllllldd, bend left] \\
	B \arrow[r] & E \arrow[r] & H \arrow[r] & A \arrow[r] & D \arrow[r]           & G \arrow[lu] &                                                           \\
	I \arrow[u] &             &             &             &                       &              &                                                          
 \end{tikzcd} 
 \end{center}

In this case the graph is of Markov type and it has three loops of length $3,7$, and $10.$ The characteristic polynomial is $P_{\varphi,0}(\lambda)=\lambda^{10}-\lambda^{7}-\lambda^{3}-1$. The unique positive root of this polynomial is $\varphi_0\approx 1.20443$, hence $h_F(b)=\ln(\varphi_0)\approx 0.18600$.

\textbf{Case 5:} When $20/3<b<8$ we get that  $(x_0,-1)\in \overline{X_{2}P_{4}}.$ Following its orbit, we obtain:
$$
\begin{array}{l}
F_{x_0}:=F(x_0,-1)=(10-b,2b-11)\in \overline{Y_{1}Y_2},\\
F_{x_0}^2:=F^2(x_0,-1)=(20-3b,21-2b)\in \overline{P_{1}P_{12}},\\
F_{x_0}^3:=F^3(x_0,-1)=(5b-42,-1)\in \overline{P_{3}P_4}.
\end{array}$$
From this observation, we get that in order to understand the dynamics we must keep track of the third iterate of $(x_0,-1)$. Calling $(x_1,-1):=F^3(x_0,-1)$ we see that we have again five possibilities. For the first four we can add more points in the graph and the entropy can be studied as before.
Concerning the fifth one we have to consider its third iterate, $(x_2,-1),$ which belongs to $\overline{P_{3}P_4}$ and so on. This is an infinite process, and we will explore it in the next section obtaining, also, the proof of Proposition \ref{p:teo1}.

\subsection{Dynamics of $F^3|_{\Gamma\cap\{x=-1\}}$. Proof of Proposition \ref{p:teo1}}

As we have seen in the preliminary observations of the previous section, to keep on the study the dynamics and the entropy of $F|_{\Gamma}$, we need to know where are located the iterates of the point $(x_0,-1)$, with $x_0=b-10$, under $F^3$.

By using the notation introduced in Section \ref{s:s2}, we have that
$$
F^3(x,-1)=F_2\circ F_1\circ F_3(x,-1)=(4x-2+b,-1).
$$
So, the set $\Gamma\cap\{x=-1\}$ is invariant by $F^3$, and to study the iterates $(x_n,-1)=F^{3n}(x_0,-1)$ we can consider the linear recurrence   
$$\left\{
\begin{array}{l}
x_{n+1}=4x_n+b-2,\\
x_0=b-10,
\end{array}
\right.
$$ whose solution is:
$$x_n=\frac{(b-8)4^{n+1}+2-b}{3}.$$

As it follows from the considerations in the previous section,  the changes on dynamics, reflected on the changes on the directed graphs and their entropies, occurs in those bifurcation values of $b,$ such that $(x_n,-1)$ is either $X_2$ or $X_3$ or $X_4$ or $X_6.$ Then, let $p_n,q_n,r_n,s_n$ be the values of $b$ such that
\begin{align*}
&(x_n,-1)=X_2=\left(-\frac{b}{2},-1\right),\
&&(x_n,-1)=X_3=(1-b,-1),\\
&(x_n,-1)=X_4=\left(\frac{1-2b}{2},-1\right),
&&(x_n,-1)=X_6=\left(\frac{2-5b}{4},-1\right),
\end{align*}
respectively. A computation gives:
$$p_n=\frac{4(4\cdot 4^{n+1}-1)}{2\cdot 4^{n+1}+1}\,,\,\,\,q_n=\frac{8\cdot 4^{n+1}+1}{4^{n+1}+2}\,,\,\,\,r_n=\frac{16\cdot 4^{n+1}-1}{2\cdot 4^{n+1}+4}\,,\,\,\,s_n=\frac{2(4\cdot 4^{n+2}-1)}{4^{n+2}+11},$$ which are the recurrences defined in \eqref{e:rec}. As mentioned before, for any $n\in\N\cup\{0\}$:
$p_{n-1}<s_n<r_n<q_n<p_n$, and therefore we can define the intervals 
$S_n=(p_{n-1},s_n)$, $T_n=[s_n,r_n]$, $U_n=(r_n,q_n)$  and $V_n=[q_n,p_n]$ that appear in the statement of Proposition \ref{p:teo1}, which cover the whole interval $(4,8)$.

%We notice that
%$$(4,8)=\bigcup_{n=0}^{\infty} (p_{n-1},p_n)=\bigcup_{n=0}^{\infty}\left(S_n\cup T_n\cup U_n\cup V_n\right).$$

\begin{proof}[Proof of Proposition \ref{p:teo1}]
	The result for $n=0$  has already been obtained in the study of the first four cases in the Section \ref{ss:prelimgamma}. For $n\geq 1$ we introduce new points in the partition. \emph{We say that  $b$ is in the level $n,$ when $b\in S_n\cup T_n\cup U_n\cup V_n.$} In this case, the points $(x_0,-1),(x_1,-1),\ldots (x_{n-1},-1)$ belong to $\overline{{X_2}P_{4}}$ while $(x_n,-1)$ has the four different possiblities, that we will explore.
	Denoting by $F_{x_k}=F(x_k,-1)$ and $F^2_{x_k}=F^2(x_k,-1),$
	the partition now must incorporate the following intervals:
	
$$\begin{aligned}
		&F_1:=\overline{P_{3}(x_n,-1)},\,F_2:=\overline{(x_n,-1)(x_{n-1},-1)},\ldots, F_{n+1}:=\overline{(x_1,-1)(x_{0},-1)};\\
		&G:=\overline{(x_0,-1)P_4};\\
		&J_2:=\overline{Y_2F_{x_{n-1}}},\,J_3:=\overline{F_{x_{n-1}}F_{x_{n-2}}},\ldots, J_{n+1}:=\overline{F_{x_{1}}F_{x_{0}}};\\
		&C_1:=\overline{P_1F^2_{x_{n-1}}},\,C_2:=\overline{F^2_{x_{n-1}}F^2_{x_{n-2}}},\ldots,C_n:=\overline{F^2_{x_{1}}F^2_{x_{0}}},\,C_{n+1}:=\overline{F^2_{x_{0}}P_{12}}.
			\end{aligned}$$
We notice that the changes in the dynamics, accordingly with the values of $b$, are given by the changes in the coverings of $F_1,F_2,$ which are the two intervals that have the point $(x_n,-1)$ in its boundary.	Using this notation, we have also the following key observation:

\medskip

\textbf{Observation:} \emph{for $b$ in the level $n$, in the directed graphs, between $K$ and $C_1$ always there appear the $n$ groups $C_i\rightarrow F_i\rightarrow J_i$ for $i=2,3,\ldots ,n+1.$  }

\medskip

The statements (a)--(d) follow from the analysis of the following different possible cases:	
	
\textbf{Case 1:}  When $b\in S_n$, then $(x_n,-1)\in \overline{P_{3}X_{6}}$ which implies $F_{x_n}\in \overline{P_{6}Y_{6}}.$ Hence the image of $F_1$  is a part of $H$ and $F_2$ covers $I,J_2$ and the other part of $H.$ When $b\in S_1$, the directed graph is:

	{\scriptsize	\begin{center} \begin{tikzcd}
			&             &             &             & K \arrow[d] \arrow[r] & C_2 \arrow[r] & F_2 \arrow[r] \arrow[lllld, dashed, bend right] \arrow[lllllldd, bend left] & J_{2} \arrow[r] & C_1 \arrow[r] & F_1 \arrow[llllllld, dashed, bend left] \\
			B \arrow[r] & E \arrow[r] & H \arrow[r] & A \arrow[r] & D \arrow[r]           & G \arrow[lu]  &                                                                             &                 &               &                                         \\
			I \arrow[u] &             &             &             &                       &               &                                                                             &                 &               &                                        
		 \end{tikzcd} \end{center}}

		Looking at the graph corresponding to $b\in S_1,$ 
	and from the above observation, which indicates that we must introduce the $n$ groups $C_i\rightarrow F_i\rightarrow J_i$ for $i=2,3,\ldots ,n+1$, 	
		we see that when $b\in S_n$ then the graph has two loops of length $3$ and $7+3n$ while the extended graph has these two loops and two more loops (in dashed) of length $4+3n$ and $7+3n.$ By using the techniques applied in the first case in Section \ref{ss:prelimgamma}, we obtain that the characteristic polynomials of these two graphs are, respectively, $P_{\alpha,n}$ and $P_{\beta,n}$. From Descartes rule of signs, they both have only one positive root, $\alpha_n$ and $\beta_n$ respectively. Hence
$\ln(\alpha_n)\leq h_F(b)\leq \ln(\beta_n).$

\textbf{Case 2:}  When   $b\in T_n$, then $(x_n,-1)\in \overline{X_{4}X_{6}}$ which gives  $F_{x_n}\in \overline{Y_{4}Y_{6}}.$ Hence $F_1$ covers $H$ and $F_2$ covers $I,J_2.$
	When $b\in T_1$ the graph is Markov-type:

	{\scriptsize	\begin{center} \begin{tikzcd}
		&             &             &             & K \arrow[d] \arrow[r] & C_2 \arrow[r] & F_2 \arrow[r] \arrow[lllllldd, bend left] & J_{2} \arrow[r] & C_1 \arrow[r] & F_1 \arrow[llllllld, bend left] \\
		B \arrow[r] & E \arrow[r] & H \arrow[r] & A \arrow[r] & D \arrow[r]           & G \arrow[lu]  &                                           &                 &               &                                 \\
		I \arrow[u] &             &             &             &                       &               &                                           &                 &               &                                
	 \end{tikzcd} \end{center}}	
		
		For $b\in T_n$, the graph is also of Markov type. From the previous graph and the above observation, it has $2$ loops of length $7+3n$ and one of length $3.$ The characteristic polynomial is $P_{\delta,n}$ which has a unique positive root $\delta_n$, so $h_F(b)=\ln(\delta_n)$.
		
\textbf{Case 3:}  When   $b\in U_n$, then $(x_n,-1)\in \overline{X_{3}X_{4}}$ so $F_{x_n}\in \overline{P_{7}Y_{4}}.$ Hence the image of $F_1$  is $H$ and a part of $I$ and the image of $F_2$ is $J_2$ and the other part of $I.$ When $b\in U_1,$

{\scriptsize \begin{center} \begin{tikzcd}
		&             &             &             & K \arrow[d] \arrow[r] & C_2 \arrow[r] & F_2 \arrow[r] \arrow[lllllldd, dashed, bend left] & J_{2} \arrow[r] & C_1 \arrow[r] & F_1 \arrow[llllllld, bend left] \arrow[llllllllldd, dashed, bend left] \\
		B \arrow[r] & E \arrow[r] & H \arrow[r] & A \arrow[r] & D \arrow[r]           & G \arrow[lu]  &                                                   &                 &               &                                                                        \\
		I \arrow[u] &             &             &             &                       &               &                                                   &                 &               &                                                                       
	 \end{tikzcd} \end{center}}
	For $b\in U_n$, and introducing again the $n$ groups of length $3$ of the observation, we get that the graph has two loops of length $3$ and $7+3n$ while the extended graph has these two loops and two more loops (in dashed) of length $7+3n$ and $10+3n.$ The characteristic polynomials of these two graphs are $ P_{\alpha,n} $ and $P_{\gamma,n}$, respectively. By Descartes' rule of signs, each has exactly one positive root, denoted by $\alpha_n$ and $\gamma_n$, respectively. Therefore, it follows that  
$\ln(\alpha_n) \leq h_F(b) \leq \ln(\gamma_n)$.	
		
\textbf{Case 4:}  When   $b\in V_n$, then $(x_n,-1)\in \overline{X_{2}X_{3}}$ hence  $F_{x_n}\in \overline{Y_{2}P_{7}}.$ Therefore $F_1$ covers $H,I$ and $F_2$ covers $J_2.$
When $b\in V_1,$

	{\scriptsize \begin{center} \begin{tikzcd}
			&             &             &             & K \arrow[d] \arrow[r] & C_2 \arrow[r] & F_2 \arrow[r] & J_{2} \arrow[r] & C_1 \arrow[r] & F_1 \arrow[llllllld, bend left] \arrow[llllllllldd, bend left] \\
			B \arrow[r] & E \arrow[r] & H \arrow[r] & A \arrow[r] & D \arrow[r]           & G \arrow[lu]  &               &                 &               &                                                                \\
			I \arrow[u] &             &             &             &                       &               &               &                 &               &                                                               
		 \end{tikzcd} \end{center}}

Hence, for $b\in V_n$, the graph is of Markov type and it has $3$ loops of length $3,7+3n$ and $10+3n.$ The characteristic polynomial is $P_{\varphi,n}$. Again, it has a unique positive root $\varphi_n$ so $\ln(\varphi_n)$. This ends the proof of statements (a)--(d).

To prove Eq. \eqref{e:relpositions}, we first notice that since the polynomials $P_{\alpha,n}(\lambda)$,
$P_{\beta,n}(\lambda)$, $P_{\delta,n}(\lambda)$, $P_{\gamma,n}(\lambda)$, $P_{\varphi,n}(\lambda)$ take negative values at $\lambda=1$ and have a positive coefficient in their leading term, so their unique positive roots $\alpha_n,\beta_n,\delta_n,\gamma_n,$ and $\varphi_n$
are located in $\lambda>1$. Now, to study their relative positions we easily see that they correspond with the abscissa of the intersection of the graphs of the functions $f_n(\lambda)=\lambda^{3n}$ and
the functions
\begin{align*}
&g_{\alpha}(\lambda)=\frac{1}{\lambda^4(\lambda^3-1)},&&
g_{\beta}(\lambda)=\frac{\lambda^3+2}{\lambda^4(\lambda^3-1)},&&
g_{\delta}(\lambda)=\frac{2}{\lambda^4(\lambda^3-1)},\\
&g_{\gamma}(\lambda)=\frac{2\lambda^3+1}{\lambda^7(\lambda^3-1)},&& 
g_{\varphi}(\lambda)=\frac{\lambda^3+1}{\lambda^7(\lambda^3-1)}.&&
\end{align*}
respectively. Set $\lambda>1$.  We  have
$$
g_{\alpha}(\lambda)=\frac{1}{\lambda^4(\lambda^3-1)}=\frac{\lambda^3}{\lambda^7(\lambda^3-1)}<\frac{\lambda^3+1}{\lambda^7(\lambda^3-1)}=g_{\varphi}(\lambda);
$$

$$
g_{\varphi}(\lambda)=\frac{1}{\lambda^4(\lambda^3-1)}+\frac{1}{\lambda^7(\lambda^3-1)}<\frac{1}{\lambda^4(\lambda^3-1)}+\frac{1}{\lambda^4(\lambda^3-1)}=g_{\delta}(\lambda),
$$

$$
g_{\delta}(\lambda)=\frac{2}{\lambda^4(\lambda^3-1)}<\frac{2\lambda^3}{\lambda^7(\lambda^3-1)}<\frac{2\lambda^3+1}{\lambda^7(\lambda^3-1)}=g_{\gamma}(\lambda),
$$
and
$$
g_{\gamma}(\lambda)=\frac{2\lambda^3+1}{\lambda^7(\lambda^3-1)}<\frac{2\lambda^3+\lambda^6}{\lambda^7(\lambda^3-1)}=\frac{2+\lambda^3}{\lambda^4(\lambda^3-1)}=g_{\beta}(\lambda).
$$
Hence 
$$g_\alpha(\lambda)<g_\varphi(\lambda)
<g_\delta(\lambda)<g_\gamma(\lambda)
<g_\beta(\lambda)\mbox{ for } \lambda>1$$
(see Figure \ref{f:fig3})
and, therefore, inequalities \eqref{e:relpositions} are proved.

\begin{figure}[ht]
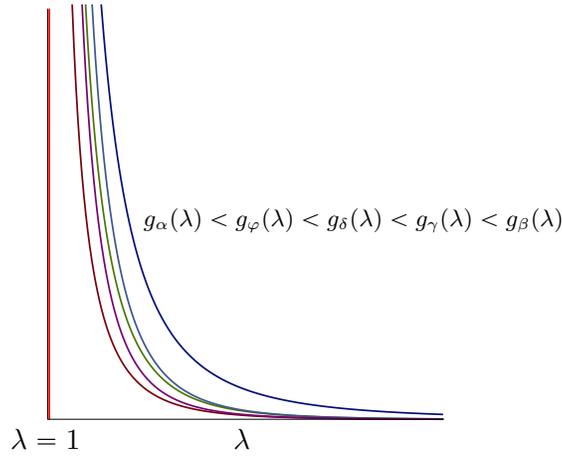

	 	\centering
	 	
	 	\begin{lpic}[l(2mm),r(2mm),t(2mm),b(2mm)]{gr4b(0.40)}

\lbl[c]{100,-5; $\lambda$}
\lbl[c]{10,-5; $\lambda=1$}

\lbl[l]{55,95; {\footnotesize $g_\alpha(\lambda)<g_\varphi(\lambda)<
g_\delta(\lambda)<g_\gamma(\lambda)<
g_\beta(\lambda)$}}
%\lbl[l]{42,17; { $g_\alpha$}}
%\lbl[l]{48,25; {\footnotesize $g_\varphi$}}
%\lbl[l]{51,30; {\footnotesize $g_\delta$}}
%\lbl[l]{56,38; { $g_\gamma$}}
%\lbl[l]{65,52; { $g_\beta$}}
\end{lpic}
\caption{Graphs of the functions $g_\alpha(\lambda)$ (brown), 	$g_\varphi(\lambda)$ (purple),
$g_\delta(\lambda)$ (green), $g_\gamma(\lambda)$ (light blue), and
$g_\beta(\lambda)$ (blue)
for  $\lambda>1$.}\label{f:fig3}
\end{figure}

By using  inequalities \eqref{e:relpositions} we prove that
all the roots  $\alpha_n$, $\delta_n$, $\gamma_n$ and $\varphi_n$ tend to $1$ when $n$ tends to infinity, by proving it 
only for the sequence of roots $\beta_n$. To do this, we simply study the relative positions of the graphs of the functions $f_n(\lambda)=\lambda^{3n}$ and $g_\beta(\lambda)$ for $\lambda>1$ (see Figure \ref{f:fig4}).

\begin{figure}[ht]
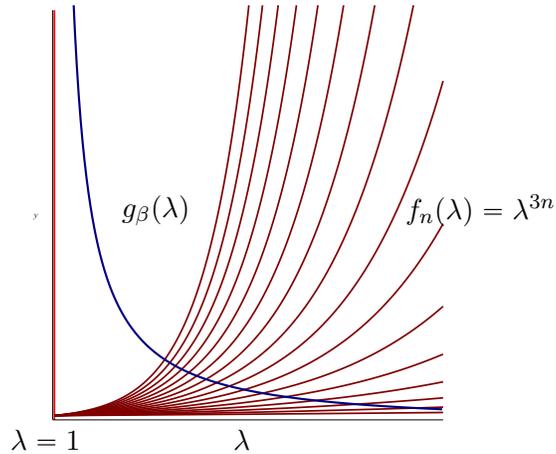

	 	\centering
	 	
	 	\begin{lpic}[l(2mm),r(2mm),t(2mm),b(2mm)]{gr5b(0.40)}

\lbl[c]{100,-5; $\lambda$}
\lbl[c]{10,-5; $\lambda=1$}

\lbl[l]{45,100; $g_\beta(\lambda)$}
\lbl[l]{175,100; $f_n(\lambda)=\lambda^{3n}$}
\end{lpic}
\caption{Graphs of $g_\beta(\lambda)$ (blue) and the  family of functions  $f_n(\lambda)=\lambda^{3n}$ (brown)
for  $\lambda>1$. }\label{f:fig4}
\end{figure}

Indeed, for all $\varepsilon>0$ and for all
$$
n>\frac{\ln(g_\beta(1+\varepsilon))}{3\ln(1+\varepsilon)},
$$
we get that $f_n(\lambda)=(1+\varepsilon)^{3n}>g_\beta(1+\varepsilon)$ and, therefore,  $\beta_n<1+\varepsilon$. Hence $\lim\limits_{n\to\infty}\beta_n=1$, which concludes the proof of the result.~\end{proof}

\section{Approximations of $\alpha$ and $\beta$ and proof of Proposition~\ref{pro:ab}}\label{s:alphaibeta}

Theorem \ref{t:teoD} states that, when $a=-1$, there exist
$\alpha\in (-112/137,-13/16)\approx(-0.8175,$ $-0.8125),$  and $\beta\in(603/874,563/816)\approx(0.6899,0.6900)$
such that the entropy $h_F(b)$ transitions from zero to positive for $b = \alpha$ and $b = \beta$, respectively.  
In this section, we provide a constructive method to obtain rational approximations of $\alpha$ and $\beta$ and we prove Proposition~\ref{pro:ab}.

\subsection{Rational approximations of $\alpha$}\label{ss:alpha}

The invariant graph $\Gamma$ for $a=-1$ and $-1< b\leq -3/4$ is given in Figure~\ref{particio-1-3/4}.

We use the following notation: $P_{1}=(0, b +1)$, $P_2=(-b -2, -1)$,  $P_3=(b +2, -3)$, $P_4=(b +4, -1+2 b)$,  $P_5=(-b +4, 4 b +3)$, $P_{6}=(-5b, 4b+7)$. For $i=1,\ldots,6,$ $P_i=F^{i-1}((0,b+1).$ 
Also $R_1=(-b-1,0)$,  $R_2=(b,-1)$,  $R_{3}=(-b,-1+2 b)$, $R_{4}=(-3 b, -1+2 b)$,  $R_{5}=(-5 b, -1)$,  $R_{6}=(-5 b, -4 b -1)$, $R_7=(-b,1).$ For $i=1,\ldots,7,$ $R_i=F^{i-1}((-b-1,0)).$ Moreover
$Q=(0,b-1),$ $T_1=(-5b,0)$ and $T_2=F(T_1)=(-5b-1,-4b).$ We have $F(R_7)=F(T_2)=P_2$ and $F(S)=R_3.$

In \cite[Proposition 27(d)]{CGMM} we prove that when $a=-1$ and $b\in [-112/137,-13/16]$ the subinterval $\Pi=\overline{R_7P_7}=U\cup V\cup A \subset \Gamma$ is invariant by $F^6.$ Here $P_7:=F(P_6)=(-9b-8,-8b-7)\in \overline{R_1P_2}$ and $U=\overline{R_7P_1}\,,\,V=\overline{P_1R_1}$ and $A=\overline{R_1P_7}.$ This interval $\Pi$ is visited for all elements of $\Gamma$ except for the points of a repulsive $6$-periodic orbit.

\begin{figure}[H]
	\footnotesize
	\centering
	\begin{lpic}[l(2mm),r(2mm),t(2mm),b(2mm)]{novafig7(0.53)}
		\lbl[r]{47,92; $R_1$}
		\lbl[r]{60,85; ${\color{blue} \mathbf{V}}$}
		\lbl[r]{50,83; ${\color{blue} \mathbf{A}}$}
		\lbl[c]{31,80; $P_7$}	

		\lbl[r]{52,98; $P_1$}
		\lbl[r]{22,67; $P_2$}
		\lbl[l]{36,67; $R_2$}
		\lbl[r]{50,42; ${\color{blue} \mathbf{Q}}$}

		\lbl[r]{72,22; $R_3$}
		\lbl[c]{79,12; $P_3$}

		\lbl[l]{65,100;${\color{blue} \mathbf{U}}$}

		\lbl[c]{116,20; $R_4$}
		\lbl[l]{132,20; $P_4$}
		\lbl[l]{160,65; $R_5$}
		\lbl[l]{158,94; $T_1$}
		\lbl[c]{177,86; $P_5$}
		\lbl[l]{158,149; $R_6$}
		\lbl[c]{154,185; $P_6$}
		\lbl[c]{131,174; $T_2$}
		\lbl[c]{68,117; $R_7$}

	\end{lpic}
	\caption{ The graph $\Gamma$ for $a=-1$ and $-1< b\leq -3/4$. When $-112/137\leq b\leq -13/16$ the point $P_7\in \overline{R_1P_2}$.}\label{particio-1-3/4}
\end{figure}

In the proof of this Proposition, see \cite{CGMM}, we explicitly compute the map $F^6$ restricted to the interval $\Pi.$  Indeed, the points of $\Pi$ write as $(x,x+b+1)$ where $x\in [-9b-8,-b],$  and  $F^6(x,x+b+1)=(g_1(x),g_1(x)+b+1)$ where $g_1(x)$ is certain explicit map. In~\cite{CGMM} it is proved   
that $g_1(x)$ is semiconjugated to a trapezoidal map of those studied in \cite{BMT}. This allows us to use the results of that reference to obtain a transition from zero to positive entropy. Indeed, some computations show that 
$g_1(x)$ is semiconjugated to the piecewise continuous linear map of the interval 
$g_2(x):[0,\frac{9b+8}{8(b+1)}]\longrightarrow [0,\frac{9b+8}{8(b+1)}]$ defined by $$g_2(x)=\begin{cases} 16x-\frac{16b+13}{b+1} & \mbox{if $x\in [0,u_1]$},\\
	\frac{9b+8}{8(b+1)} & \mbox{if $x\in [u_1,u_2]$},\\
	-8x+\frac{9b+8}{b+1} & \mbox{if $x\in [u_2,\frac{9b+8}{8(b+1)}]$}.
\end{cases}
$$
where $u_1=\frac{137b+112}{128(b+1)}$ and $u_2=\frac{7(9b+8)}{64(b+1)}.$  
Recall, that as we have already explained in Section~\ref{a:prelim}, the semiconjugation between $g_1(x)$ and $g_2(x)$ does not affect the entropy calculation. Intuitively this fact can be understood because  it simply corresponds to remove an interval of constancy of the map.

Now we extend this map on a bigger interval $[x_1,x_2]\supset \left[0,\frac{9b+8}{8(b+1)}\right]$ to get a trapezoidal map $g_3(x):$

$$g_3(x)=\begin{cases} 16x-\frac{(16b+13)}{b+1} & \mbox{if $x\in [x_1,u_1]$},\\
	\frac{9b+8}{8(b+1)} & \mbox{if $x\in [u_1,u_2]$},\\
	-8x+\frac{9b+8}{b+1} & \mbox{if $x\in [u_2,x_2]$}.
	\end{cases}
$$
Here $x_1=\frac{16b+13}{15(b+1)}<0$ is the repulsive fixed point of $L(x):=16x-\frac{(16b+13)}{b+1}$ and $x_2=\frac{119b+107}{120(b+1)}>\frac{9b+8}{8(b+1)}$ satisfies that $g_3(x_2)=x_1.$  See Figure~\ref{GrafT0} for an illustration of this. We note that since $x_1$ is repulsive for each $x\in(x_1,x_2)$ there exists  $n$ such that $g_3^n(x)\in \left(0,\frac{9b+8}{8(b+1)}\right).$ So the dynamics of $g_2$ can be studied analyzing $g_3$ and vice versa.  We omit all the details because the reader can find them, fully developed, in \cite{CGMM}.

\begin{figure}[ht]
	\centering
	\begin{lpic}[l(2mm),r(2mm),t(2mm),b(2mm)]{grtb(0.40)}
		\lbl[r]{3,10; {$x_1$}}
		\lbl[r]{3,190; {$x_2$}}
		\lbl[c]{4,-5; {$x_1$}}
		\lbl[c]{196,-5; {$x_2$}}
		\lbl[c]{70,-5; {$0$}}
		\lbl[c]{90,-5; {$u_1$}}
		\lbl[c]{130,-5; {$u_2$}}
		\lbl[c]{166,-5; {$\frac{9b+8}{8(b+1)}$}}
	\end{lpic}
	\caption{Sketch of the graphic of $g_3(x)$ in blue and the graphic of $g_2(x)$
		inside the red box. The graphic is not to scale, \cite{CGMM}.}
	\label{GrafT0}
\end{figure}

 After a rescaling of the interval $[x_1,x_2]$ to the interval $[0,1]$, we obtain that the map $g_3(x)$ is conjugated with the trapezoidal map $g_4=T_{1/16,1/8,Z}$ with $Z=\frac{55 b}{16(3b-1)}$ according with the notation in \cite{BMT}. Hence, by using the results in this reference, we  know \emph{that 
there exists $\alpha\in (-112/137,-13/16)$ such that $h_F(b)=0$  for $b\in[-112/137,\alpha]$ while $h_F(b)>0$ and non-decreasing when $b\in(\alpha,-13/16]$.}

To determine sharp bounds of $\alpha$ it suffices to fix our attention to the map $g_2.$  To simplify the calculations that follow, instead of working with the  trapezoidal map $T_{1/16,1/8,Z}$, we will use the following one, $\varphi:[0,1]\longrightarrow[0,1]$, which is also conjugate to the map $g_2(x).$
$$\varphi(x)=\begin{cases} 16x+d & \mbox{if $x\in [0,\frac{1-d}{16}]$},\\
	1 & \mbox{if $x\in [\frac{1-d}{16},\frac7 8]$},\\
	-8x+8 & \mbox{if $x\in [\frac78,1]$}.
\end{cases}
$$
where $d=-\frac{8(16b+13)}{9b+8}.$  The procedure to approach $\alpha$ will consist on providing upper and lower values of this value. The idea is very simple because we already know that in this range of values of~$b$ the entropy $h_F(b)$ is non-decreasing:

\begin{itemize}
	\item To get upper bounds it suffices to find a value of $b,$ say $b_p$, such that $\varphi$ has a period orbit in $[0,1]$ of minimal period $p=m 2^N$ for some $0<N\in\N,$ and $m$ odd (in fact we will always take $m=3$).  
By the Bowen-Franks' Theorem, \cite[Theorem 1]{BF}, the entropy satisfies:
$
h_{\varphi}(b_p)> \ln(2)/p>0.
$
So, using Lemma \ref{entroo} in Section~\ref{a:prelim}, $h_F(b_p)= h_{g_2}(b_p)/6=h_\varphi(b_p)/6>0$ (recall that $g_2$ is, essentially, $F^6|_{\Pi}$) and therefore $\alpha<b_p$. 
Note that, a priori, this value of $b_p$ exists because the maps $\varphi$ are also conjugate to the trapezoidal map family and thus encompass all possible dynamics of a unimodal map, \cite{BMT}.

The fact that $h_\varphi(b_p)>0$ can also be seen by studying the Markov partition induced by this periodic orbit.

	\item  To get lower bounds it suffices to find a value of $b$, say $b_p$  such that $\varphi$ has a period orbit in $[0,1]$ of minimal period $p=2^N,$ for some $0<N\in\N,$ and such that the Markov partition induced by this periodic orbit gives zero entropy. Then  $b_p<\alpha.$
\end{itemize}

Let us start with the upper bound $b_3.$ We impose that the orbit starting at $x=1$ is $3$-periodic for $\varphi.$ We have  $1\to 0 \to d \to \varphi(d).$  The condition $\varphi(d)=1$ is equivalent to 
\[
\frac{1-d}{16}\le d \le \frac 78 \quad\Longleftrightarrow\quad  \frac 1{17}\le d \le  \frac78  \quad     \Longleftrightarrow\quad  -\frac{888}{1087}\le b\le  -\frac{1776}{2185}.
\] 
where we have used that $b=-\frac{8(d+13)}{9d+128}.$  So, we know that by taking $b_3=-\frac{888}{1087}\approx -0.8169,$  we obtain that $h_{\varphi}(b_3)>\ln(2)/3>0$ and, therefore, $\alpha<b_3.$

Similarly, by taking $d=\frac{7295}{8191},$ we obtain that its orbit by $\varphi$ is $6$-periodic
\[
1\to 0\to \frac{7295}{8191} \to \frac{7168}{8191}\to\frac{8184}{8191}\to\frac{56}{8191}\to1.
\]
Hence by taking $d=\frac{7295}{8191},$ we obtain that $b_6=-\frac{910224}{1114103}\approx -0.81700166$,
$h_{\varphi}(b_6)>\ln(2)/6>0$ and $\alpha<b_6.$

By following this approach we have proved that by taking
\begin{align}\label{fitasup}
b=b_{24}&=-\frac{1049417824596806956103568}{1284474531463219438945271}\nonumber\\&\approx -0.817001660127394075579379106922368833240
\end{align}	
 the map $\varphi$ has a $24$-periodic orbit, $h_\varphi(b_{24})>\ln(2)/24>0.$ So $\alpha<b_{24}.$

To get lower bounds for $\alpha$ we will search for $2^N$-periodic orbits of $\varphi$ for $N=2,3,4,5$ with a given itinerary. The fact that  these itineraries  provide periodic orbits of these periods that give rise to Markov partitions with zero entropy is a well known and established fact in the study of unimodal maps, see~\cite{Col}. 

To codify the orbits, as usual we will call the intervals $L:=\left[0,\frac{1-d}{16}\right],$ $C:=\left[\frac{1-d}{16},\frac{7}{8}\right],$ 
and $R:=\left[\frac{7}{8},1\right].$ Then, if we consider  $x_0\in [0,1],$  $x_{n+1}=\varphi(x_n),$ and the sequence $x_0,x_1,x_2,x_3,$ $\ldots, x_n,\ldots,x_m$ we will say that it has for instance the itinerary $RLLR\cdots L \cdots C$ if $x_0,x_3\in R,$  $x_1,x_2,x_n\in L$ and $x_m\in C.$ Similarly, we introduce the linear maps $L(x)=\varphi(x)|_{L}=16x+d$ and $R(x)=\varphi(x)|_{R}=-8x+8.$

 With this notation,  the itinerary of $2^N$-periodic orbits that provide zero entropy can be obtained from a seed itinerary and the so called ``$*$-product''. In a few words, given the itinerary $\mathcal{R}=RRRR\ldots$ and any itinerary $\mathcal{S}=S_1S_2\ldots S_k,$ where $S_j\in\{L,R\}$ we define a new itinerary of length $2k,$ where $T_j\in\{RL, RC, RR\}$ as
 \[
 \mathcal{R}*\mathcal{S}=T_1T_2\ldots T_{2k},\quad\mbox{where}\quad T_j=\begin{cases}
 	RL &\mbox{if}\quad S_j=R,\\
 	RC &\mbox{if}\quad S_j=C,\\
 	RR &\mbox{if}\quad S_j=L.
 \end{cases}
 \]
Hence, if we start with the seed $\mathcal{S}_2=RC$ we obtain that  $\mathcal{S}_4=\mathcal{R}*\mathcal{S}_2=RLRC.$ Similarly,
\begin{align*}
\mathcal{S}_8&=\mathcal{R}*\mathcal{S}_4=RLRRRLRC,\\	
\mathcal{S}_{16}&=\mathcal{R}*\mathcal{S}_8=RLRRRLRLRLRRRLRC,\\
\mathcal{S}_{32}&=\mathcal{R}*\mathcal{S}_{16}=RLRRRLRLRLRRRLRRRLRRRLRLRLRRRLRC.
\end{align*}  
Following~\cite{Col}, to obtain periodic orbits that give a Markov partition with zero entropy, we will search for $2^N$-periodic orbits with itineraries $\mathcal{S}_{2^N}$ for $N=2,3,4$ and $5.$

To find a $4$-periodic orbit we impose that $\varphi^4(1)=1,$ with itinerary $RLRC.$ This is equivalent to impose that $R(L(R(1)))\in C.$  In other words, $1\stackrel R \to 0\stackrel L\to d\stackrel R \to 8-8d$ and $8-8d\in C,$ that its
\[
\frac{1-d}{16}\le 8-8d \le \frac 78 \quad\Longleftrightarrow\quad  \frac{57}{64}\le d \le  1   \quad\Longleftrightarrow\quad  -\frac{112}{137} \le b \le -\frac{7112}{8705},
\] 
where we have used again that $b=-\frac{8(d+13)}{9d+128}.$ Then by taking $b=b_4:=-\frac{7112}{8705}\approx -0.8170017$ we know that $h_\varphi(b_4)=0$ and $\alpha> b_4.$ 

For $8$-periodic orbits the desired itinerary is $\mathcal{S}_8=RLRRRLRC$ and the orbit with smaller $d$ is when  $d={\frac {933761}{1048449}},$ and then
\[
1\stackrel R\to 0\stackrel L\to {\frac {933761}{1048449}}\stackrel R\to{\frac {917504}{1048449}}\stackrel R\to{\frac {
		1047560}{1048449}}\stackrel R\to{\frac {7112}{1048449}}\stackrel L\to{\frac {1047553}{1048449}}\stackrel R\to
{\frac {7168}{1048449}}\stackrel C\to1.
\]
The value of $b$ corresponding to this $d$ is $b=b_8:=-\frac{116508784}{142605321}\approx -0.8170016601273945.$ Again $h_\varphi(b_8)=0$ and $\alpha>b_8.$

Finally, by taking
\[
b=b_{32}:=-\frac{140850476140085945702816746162288}{172399253286857828660669132569609}
\]
we get a $32$-periodic orbit of $\varphi$ starting at $1,$  $h_\varphi(b_{32})=0$ and $\alpha>b_{32}.$ Since $|b_{24}-b_{32}|<4\times 10^{-40},$ it follows that all the digits of the expression of $b_{24}$ given in~\eqref{fitasup} are also right digits of $\alpha.$ This proves the first part of Proposition~\ref{pro:ab}.

\subsection{Rational approximations of $\beta$}\label{ss:beta}

The invariant graph $\Gamma$ for $a=-1$ and $-2/3< b\leq 5/7$ is given in Figure~ \ref{f:A}. The points appearing there are  $P_{1}=(-b -2, -1)$, $P_{2}=(b +2, -3)$, $P_{3}=(b +4, 2 b -1
)$, $P_{4}=(-b +4, 5)$, $Q=(0, 7 b -5)$, $R_1=(0, 2 b -1)$, $R_2=(-2 b, 1-b)$, $R_3=(3 b -2, -1)$,   $R_4=(3 b -2, 4 b -3)$,  $R_5=(-b
, 8 b -5)$, $R_6=(-7 b +4, -8 b +5)$,  $R_7=(15b-10,-14b+9)$, $S=(0, b +1)$, $T_1=(-b, 0)$, $T_2=(b -1, 0)$, $W=(1-3 b, 0)$, 
$X_1=(0, -1)$,
$X_2=(0, b -1)$, $X_3=(-b, 2 b -1)$,
$X_4=(-b, 1-2 b)$, $X_5=(3 b -2
, 1-2 b)$, $X_6=(5 b -4, 2 b -1)$, $X_7=(-7 b +4, 4 b -3)$, $Y_1=(-7 b +4, 0)$, $Y_2=(7 b -5, -6 b +4)$,   $Z_1=(7 b -5, 0)$, $Z_{2}=(-7 b +4, 8 b -5)$ and
$Z_3=(-b, 9-14 b)$. 

Taking the point $R_1=(0, 2 b -1)$ and their iterates $R_{i+1}=F(R_i)$, we get the points $R_8=(29b-20,2b-1)$ and 
$R_{15}=(300-435b,2b-1)$. In \cite[Proposition 31(c)]{CGMM} we prove that for $b\in [603/874,563/816]$ the subinterval $\Sigma=\overline{R_{15}R_8}\subseteq \Gamma$  is invariant by $F^7$, and that this interval is visited by any point of $\Gamma$ except a 3-periodic orbit, 
a 7-periodic orbit, a 4-periodic orbit (all of them repulsive) and the preimages of these orbits.

\vfill
\newpage

\vspace{1.5cm}

\begin{figure}[H]
	\footnotesize
	\centering
	\begin{lpic}[l(2mm),r(2mm),t(2mm),b(2mm)]{am-cas-A-zoom-v3b(0.50)}
		\lbl[l]{3,185; $\boxed{a=-1,\,2/3< b\leq 5/7}$}
		
		\lbl[l]{95,173; $S$}
		\lbl[l]{28,111; $R_2$}
		\lbl[l]{48,93; $W$}
		\lbl[l]{58,82; $X_7$}
		\lbl[l]{73,79; $X_4$}
		\lbl[r]{60,70; $R_6$}
		\lbl[l]{73,60; $Z_3$}
		\lbl[l]{87,88; $Y_2$}
		\lbl[l]{96,78; $X_2$}
		\lbl[l]{108,80; $X_5$}
		\lbl[l]{104,48; $R_3$}
		\lbl[l]{93,48; $X_1$}
		\lbl[l]{108,88; $R_4$}
		\lbl[l]{103,93; $Q$}
		\lbl[c]{100,101; $Z_1$}
		\lbl[c]{80,119; $X_6$}
		\lbl[c]{66,119; $X_3$}
		\lbl[c]{70,128; $R_5$}
		\lbl[c]{62,128; $Z_2$}
		\lbl[l]{54,101; $Y_1$}
		\lbl[l]{103,119; $R_1$}
		\lbl[l]{71,101; $T_1$}
		\lbl[l]{86,101; $T_2$}
		\lbl[l]{126,62; $R_7$}

	\end{lpic}
	
	\caption{Detail of the  graph $\Gamma$ for $a=-1$ and $2/3< b\leq 5/7$  and,  beside, a larger view~\cite{CGMM}. }\label{f:A}

\vspace{-12,8cm}\hspace{9cm}\begin{lpic}[l(2mm),r(2mm),t(2mm),b(2mm),figframe(0.2mm)]{am-cas-A(0.22)}

	\lbl[l]{60,117; $S$}
	\lbl[l]{27,84; $R_2$}
	\lbl[r]{95,44; $R_3$}
	\lbl[r]{25,44; $P_{1}$}
	\lbl[l]{136,10; $P_{2}$}
	\lbl[l]{170,95; $P_{3}$}
	
	\lbl[l]{152,186; $P_{4}$}
\end{lpic}
\end{figure}

\vspace{7.9cm}
%\vspace{11.8cm}

In the proof of the mentioned Proposition we show that  the map  $F^7\vert_\Sigma$ is given by $F^7(x,2b-1)=(k_1(x),2b-1)$ with $x\in[-435b+300,29b-20]$,
where
$$k_1(x)=\begin{cases} 16x+4-3b &\mbox{if }x\in [-435b+300,2b-3/2],\\29b-20 & \mbox{if }x\in [2b-3/2,0],\\ -16x+29b-20 & \mbox{if }x\in [0,29b-20].	
						\end{cases}	$$ 
As in the previous case, the map $k_1(x)$  can be extended to a   trapezoidal map $k_2=T_{1/16,1/16,Z}$ with $Z= \frac{45-60b}{48b-29}$ in the notation of \cite{BMT}, defined in a larger interval in such a way that both have essentially the same dynamics and moreover share  entropy. Then, by using the results of that paper,  we can  prove that 
\emph{there exists $\beta\in(603/874,563/816)$ such that
$h_F(b)=0$ for $b\in(603/874,\beta]$ while  
$h_F(b)>0$ and is non-decreasing when $b\in(\beta,563/816].$ }

To determine sharp approximations of $\beta$ we will use that $k_1(x)$ is   also conjugated to the piecewise continuous linear map of the interval $[0,1],$
$$ \psi(x)=\begin{cases} 16x+d & \mbox{if $x\in \left[0,\frac{1-d}{16}\right]$},\\
	1 & \mbox{if $x\in \left[\frac{1-d}{16},\frac{15}{16}\right]$},\\
	-16x +16& \mbox{if $x\in \left[\frac{15}{16},1\right]$},
\end{cases}
$$
which is very similar to the map $\varphi$ used to determine approximations of~$\alpha.$ By following the same steps that in the previous case we obtain that
for \[b=b_{24}:={\frac {945506314303393205598153}{1370433212950874384162254}}\] it holds that $h_{\psi}(b_{24})>\ln(2)/24>0.$

Similarly, for \[b=b_{32}:={\frac {798396920638883099973166531706985228123}{
	 		1157210312199077596904301690272087447914}}
	 \] we get that $h_{\psi}(b_{32})=0.$

Hence $\beta \in (b_{32},b_{24})$ and since $|b_{24}-b_{32}|<5\times 10^{-50},$ we have that all shown digits of
\[
b_{24}\approx 0.6899324282045742867004889129507817387052603507745,
\]
are also right digits of $\beta.$ This proves the second part of Proposition~\ref{pro:ab}.

\section{Proof of Theorem~\ref{t:teonou} }\label{s:full}

An interesting fact that appears when studying the maps $F$, is that all their dynamic complexity for $a<0$, as reflected in Theorems \ref{t:teoB} and \ref{t:teoD}, does not appear in numerical simulations, where only periodic orbits are observed as $\omega$-limit sets.

In Theorem \ref{t:teoC}, it is proved that for each value of $a<0$ and arbitrary $b$, there exists \emph{an open and dense set} in each of the invariant graphs such that, for points in this set, there are at most three distinct 
$\omega$-limits which, when $b/a\in \mathbb{Q},$ are attractive periodic orbits. We remark that this last condition is always satisfied in numerical experiments. 

However, to demonstrate that in simulations we will only observe periodic orbits, is not enough to have an open and dense set of initial conditions in each graph converging to a periodic orbit: it is necessary to prove that this set has full measure in the graph. As we explain in \cite{CGMM}, we believe that this is the case in all the graphs.   
This is what we prove in Propositions \ref{p:propofull} and \ref{p:x} of this work for some particular range of the parameters. To understand the nature of this full-measure set, we recall some issues we identified in the proof of Theorem \ref{t:teoC}: 
\begin{itemize}
\item For each invariant graph, there exist some concrete edges that collapse to a point under the action of $F$. The open and dense set in the statement of  theorem is the set of preimages of these edges that, inspired by the notation introduced in \cite{BMT}, we call  \emph{plateaus}. The $\omega$-limits of these plateaus are the ones referred in the statement. These plateaus are the edges of $\Gamma$ in $Q_1$ whose axes have slope $1$, and the edges of $\Gamma$ in $Q_3$ whose axes have slope $-1$. 
\item  The $\omega$-limits of the plateaus are the only visible ones, in long-term, by numerical simulation.  This is because on the rest of the edges, the dynamics is expansive (and consequently repulsive). For this reason, and in particular, the great bulk of the periodic orbits are repulsive \cite[Lemma 22(c)]{CGMM}, and are not visible in numerical simulations.
\end{itemize}

We split the proof of Theorem~\ref{t:teonou} into two cases because the methods used are completely different.

\subsection{Case $a=-1$ and $b<-2$}

In this range of parameters, by using again~\eqref{conj} we can take $a=-1$ and $b<-2$. For them the associated invariant graph $\Gamma$ is the topological circle given in Figure~\ref{ff:1}, and $F|_{\Gamma}$ is conjugated to a degree-1 circle map, so its dynamics can be described in terms of an associated rotation number, \cite{ALM}. \emph{It is important to notice that the graph has only one plateau: $\overline{R_2S}$.} Other points in the graph given in Figure~\ref{ff:1} are  $P_{1}=(-b -2, -1)$, $P_{2}=(-b -2, -3)$, $P_{3}=(-b, -5)$, $P_{4}=(-b +4, -5)$, $P_{5}=(-b +8, -1)$, $P_{6}=(-b +8, 7)$, $P_{7}=(-b, 1)$, $R_{1}=(-b +8, 0)$, $R_{2}=(-b +7, 8)$, and $S=(-b -1, 0)$. 

\begin{figure}[H]
	\footnotesize
	\centering
	\begin{lpic}[l(2mm),r(2mm),t(2mm),b(2mm)]{am-cas-1-especial(0.47)}
		\lbl[l]{27,180; $\boxed{a=-1,\,b\leq -2}$}
		\lbl[l]{52,83; $S$}
		\lbl[r]{45,65; $P_1$}
		\lbl[r]{45,40; $P_2$}
		\lbl[l]{70,10; $P_3$}
		\lbl[l]{121,10; $P_4$}
		\lbl[l]{177,67; $P_5$}
		\lbl[l]{177,83; $R_1$}
		\lbl[l]{177,169; $P_6$}
		\lbl[c]{162,185; $R_2$}
		\lbl[l]{65,93; $P_7$}
	\end{lpic}
	\caption{The graph $\Gamma$ for $a=-1$ and $b\leq -2$, \cite{CGMM}.}\label{ff:1}
\end{figure}

The Proposition 25 of \cite{CGMM} fully describes the dynamics of $F|_{\Gamma}$ for $a=-1$ and $b<-2$. We summarize it:
\emph{\begin{itemize}
\item The map $F$ has the fixed point $p=(-b,-1)\in Q_4$. Moreover,
the map $F\vert_{\Gamma}$ is conjugated to a degree-1 circle map with zero entropy. Its rotation number is $1/7.$ 
\item It has two periodic orbits: (a) The $7$-periodic orbit, $\mathcal{P}$, given by the points $P_{i+1}=F(P_i)$ with $i=1,\ldots,6$,  which is attractive, and which is the $\omega$-limit of the unique plateau of $\Gamma$, $\overline{SR_2}$. (b) The $7$-periodic orbit $\mathcal{Q}$, characterized by the initial condition  $\left(-b-\frac{16}{15},-\frac{1}{15}\right)$, which is repulsive.
\item Finally, for any $(x,y)\in\Gamma\setminus \mathcal{Q}$ there exists some $n$ such that $F^n(x,y)\in \mathcal{P}$. 
\end{itemize}
}

In this section, we provide a different and simple proof of the fact that the set of preimages of the plateau $\overline{R_2S}$ has full measure. Of course, this fact, also follows from the above result in \cite{CGMM}, since the $7$-periodic orbit $\mathcal{P}$, that attracts all the orbits (except the repelling orbit $\mathcal{Q}$) 
is the $\omega$-limit of the plateau.

\begin{propo}\label{p:propofull}
The set of preimages by $F^{7n}$ of the plateau $\overline{R_2 S}$,  namely $\bigcup_{n=0}^{\infty} F^{-7n}|_{\Gamma}(\overline{R_2S})$, has full Lebesgue measure in $\Gamma$. 
\end{propo}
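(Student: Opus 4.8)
The plan is to prove the equivalent statement that the complement in $\Gamma$ of the set of preimages of the plateau $\Pi:=\overline{R_2S}$ has zero Lebesgue measure, and to do this using precisely the two structural features recalled above: $F|_\Gamma$ is conjugate to a degree-$1$ circle map of \emph{zero} entropy, and off the plateau $F$ is \emph{uniformly expanding}. Write $W:=\bigcup_{k\ge 0}F^{-k}|_\Gamma(\Pi)$. By definition $x\notin W$ if and only if $F^k(x)\notin\Pi$ for every $k\ge 0$, so $\Gamma\setminus W$ equals the survivor set $K:=\{x\in\Gamma: F^k(x)\notin\operatorname{int}\Pi\ \text{for all }k\ge 0\}$, which is closed and forward invariant under $F$. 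Since $F(\Pi)$ is the single point $P_1$ of the attracting $7$-cycle $\mathcal{P}$ and $P_7\in\operatorname{int}\Pi$ is fixed by $F^7$, the attracting cycle returns to the plateau every seven steps; this is the viewpoint behind the index $7n$ in the statement, and it will let us identify the surviving points with the finite repelling cycle $\mathcal{Q}$. Thus everything reduces to showing $\operatorname{Leb}(K)=0$.

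First I would fix a monopartition $\mathcal P$ of $\Gamma$ that has the plateau edge as one of its intervals, so that each of the $N(F,\mathcal P,m)$ admissible length-$m$ itineraries determines an interval on which $F^m$ is monotone, and then estimate the decreasing sets $K_m:=\{x: F^k(x)\notin\operatorname{int}\Pi,\ 0\le k\le m\}\downarrow K$. The set $K_m$ is the union of those itinerary-intervals whose symbol sequence never uses the plateau; on each of them the whole orbit of length $m$ avoids $\Pi$, so the piecewise-linear derivative satisfies $|(F^m)'|\ge \mu^m$, where $\mu>1$ is a uniform lower bound for $|F'|$ off the plateau. Because $F^m$ is monotone (hence injective) on each such interval and its image lies in $\Gamma$, every one of these intervals has length at most $\mu^{-m}|\Gamma|$, where $|\Gamma|$ is the total length of $\Gamma$. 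Counting at most $N(F,\mathcal P,m)$ of them gives
\[
\operatorname{Leb}(K_m)\ \le\ N(F,\mathcal P,m)\,\mu^{-m}\,|\Gamma|.
\]
Since $h(F|_\Gamma)=0$, Lemma~\ref{grownumber} yields $\sqrt[m]{N(F,\mathcal P,m)}\to 1$, i.e. $N(F,\mathcal P,m)=e^{o(m)}$, so the right-hand side behaves like $e^{o(m)}e^{-m\ln\mu}\to 0$. Therefore $\operatorname{Leb}(K)=\lim_m\operatorname{Leb}(K_m)=0$, so $W$ has full measure and the only surviving points are the repelling orbit $\mathcal{Q}$, in agreement with Proposition~25 of \cite{CGMM}.

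The two steps that require genuine work are the following, and I expect the first to be the main obstacle. One must produce the uniform expansion constant $\mu>1$: using the explicit quadrant forms \eqref{e:Fis} of $F$ together with the coordinates of the vertices of $\Gamma$ listed for Figure~\ref{ff:1}, one computes the slope of $F$ along each edge of $\Gamma$ different from the plateau (splitting edges at the axes where $F$ changes linear piece) and checks that all these slopes exceed $1$ in absolute value; the delicate part is ruling out any edge on which $F$ acts as an isometry, and it is here that the special geometry of $\Gamma$ for $b<-2$ genuinely enters. The second step is purely combinatorial: one must see that $K_m$ is covered by at most $N(F,\mathcal P,m)$ monotonicity intervals—immediate, since points with the same length-$m$ itinerary lie in a common interval of monotonicity of $F^m$—and that zero entropy forces the subexponential growth of this count, which is exactly the content of Lemma~\ref{grownumber}. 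With the expansion bound secured, the measure estimate displayed above is then routine.
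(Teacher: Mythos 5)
Your argument is correct, but it follows a genuinely different route from the paper's. The paper works edge by edge: using the directed graph $A\to B\to\cdots\to H\to A$ it notes that $F^7$ leaves each edge invariant, computes $F^7|_A$ explicitly as a one--dimensional map $f$ with two constant branches and one affine branch of slope $16$, and observes that every point other than the unique (repelling) fixed point of $f$ falls into a constant branch --- hence into a preimage of the plateau --- after finitely many iterates; the same computation on the remaining edges shows that the complement of the preimage set is exactly one point per edge, namely the repelling $7$-cycle $\mathcal{Q}$. That is stronger than measure zero and completely self-contained. You instead run a soft survivor-set estimate, $\operatorname{Leb}(K_m)\le N(F,\mathcal{P},m)\,\mu^{-m}\,|\Gamma|$, combining uniform expansion off the plateau with the zero-entropy fact recalled from \cite{CGMM} via Lemma~\ref{grownumber}. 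What your route buys is robustness: it requires no computation of any iterate of $F$ and would apply verbatim to any zero-entropy invariant graph on which $F$ is uniformly expanding off the plateaus --- precisely the kind of extension the paper says its explicit method does not easily admit. What it costs is the reliance on the external zero-entropy statement (itself nontrivial) and a weaker conclusion: your estimate gives only that the survivor set is null, while its identification with $\mathcal{Q}$ is again imported from \cite{CGMM} rather than proved.

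Two points you flag deserve settling. First, the expansion constant you call the main obstacle does exist, and the check is short: by \eqref{e:Fis}, $F$ is a similarity of ratio $\sqrt{2}$ on $Q_2$ and on $Q_4$; every non-plateau edge of the graph in Figure~\ref{ff:1} lies in $Q_4$ except $\overline{R_1P_6}$ (vertical, in $Q_1$, factor $\sqrt{2}$) and $\overline{P_6R_2}$ (slope $-1$, in $Q_1$, factor $2$). Hence $\mu=\sqrt{2}$ works; note it is expansion factors, not slopes, that must exceed $1$ (the edge $\overline{P_3P_4}$ is horizontal). Second, your silent replacement of $\bigcup_n F^{-7n}(\overline{R_2S})$ by $W=\bigcup_k F^{-k}(\overline{R_2S})$ is a real discrepancy: a point whose orbit first meets the plateau at time $6$ meets it only at times congruent to $6$ modulo $7$, so the literal union over multiples of $7$ is strictly smaller than $W$, and only $W$ has full measure. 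However, the paper's own proof shares this imprecision (its sets $\mathcal{A}_{1,1}$ and $\mathcal{A}_{1,2}$ are preimages of the plateau under $F^6$ and $F^5$, not $F^7$), so both arguments in fact prove the meaningful statement --- almost every point of $\Gamma$ eventually enters the plateau --- and this cannot be held against you.
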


As a consequence of the above result there is a full measure set of initial conditions in $\Gamma$ such that their orbits tend to the $\omega$-limit set of this plateau, $\mathcal{P}=\omega\left(\overline{R_2 S}\right)$ and the proof of Theorem~\ref{t:teonou} when $a<0$ and $b/a<-2$ follows.

Before starting its proof we recall some of the basic aspects of the dynamics of $F|_{\Gamma}$  in this case. The graph in Figure \ref{ff:1} is a topological circle and $F\vert_{\Gamma}$ is non-decreasing. The interval $\overline{R_2S}$ is the unique plateau of the graph. Indeed, $\overline{R_2S}\twoheadrightarrow P_1$ (where the symbol $\twoheadrightarrow$ means ``collapses to'') and, of course, also $F(R_2)=F(S)=P_1$. This implies that the 
 the $7$-periodic orbit $F(P_i)=P_{i+1}$ for $i=1\ldots 6$, is the $\omega$-limit set $\omega\left(\overline{R_2 S}\right)$.

Denoting  $A:=\overline{P_{1}P_{2}}$, $B:=\overline{P_{2}P_{3}}$, $C:=\overline{P_{3}P_{4}}$, $D:=\overline{P_{4}P_{5}}$, $E:=\overline{P_{5}R_1}$, $G:=\overline{P_{6}R_2}$ and $H:=\overline{P_{1}S},$ we obtain the following oriented graph:

\begin{center}
\begin{tikzcd}\label{e:graphA}
	A \arrow[r] & B \arrow[r] & C \arrow[r] & D \arrow[r] & E \arrow[r] & G \arrow[r] & H \arrow[llllll, bend right]
\end{tikzcd}
\end{center}

We do not include here the plateau and also the interval $\overline{P_6R_1}$ since $\overline{P_6R_1}\rightarrow \overline{P_7R_2}\twoheadrightarrow P_1$.

\begin{proof}[Proof of Proposition \ref{p:propofull}]
According with the above directed graph, $F^7$ leaves invariant each of the intervals  $A,\ldots,$ $H$.
Let us consider the edge $A$ delimited by the points $P_1$ and $P_2$ (which, as mentioned before, belong to $\omega\left(\overline{R_2 S}\right)$).  This edge can be parametrized as
$A=\left\{(-b-2,y),\,-3\leq y\right.$ $\left.\leq -1\right\}. $
A computation shows that $F^7_{|A}=F^7(-b-2,y)=(-b-2,f(y))$, where
$$
f(y)=\left\{\begin{array}{ll}
-3& -3\leq y\leq -5/4,\\
16y+17 & -\frac{5}{4}< y< -\frac{9}{8},\\
-1 & -\frac{9}{8}\leq y\leq -1,
\end{array}\right.
$$
which is depicted in  Figure \ref{grafic0}. The map $f(y)$ has a unique fixed point $p$ in $[-3,-1]$ which is a repellor.

\begin{figure}[ht]
\centering
\footnotesize
\begin{lpic}[l(2mm),r(2mm),t(2mm),b(2mm)]{grafic0-v2(0.42)}
\lbl[l]{-2,7; $-3$}
\lbl[l]{160,7; $-\frac{5}{4}$}
\lbl[l]{172,7; $-\frac{9}{8}$}
\lbl[l]{183,7; $-1$}

\lbl[l]{-8,18; $-1$}
\lbl[l]{-8,189; $-3$}
\end{lpic}
\caption{The graph of the function $f(y)$ such that $F^7_{|A}=F^7(1,y)=(1,f(y))$.}\label{grafic0}
\end{figure}

Set $\mathcal{A}_n=F^{-7n}(\overline{R_2 S})\cap A$.  From the expression of $f(y)$ we notice that $\mathcal{A}_1=\mathcal{A}_{1,1}\cup \mathcal{A}_{1,2}$, where the sets $\mathcal{A}_{1,1}=\{(-b-2,y), y\in[-\frac{9}{8},-1]\}$ and $\mathcal{A}_{1,2}=\{(-b-2,y), y\in[-3,-5/4]\}$ are such that
$\mathcal{A}_{1,1}\twoheadrightarrow P_1$ and $\mathcal{A}_{1,2}\twoheadrightarrow P_2$ by $F^7$. Hence,
\begin{equation}\label{e:equacionsdelesA1}
\mathcal{A}_1=F^{-7}(\overline{R_2 S})\cap A
=F^{-7}(P_1\cup P_2)\cap A=\mathcal{A}_{1,1}\cup \mathcal{A}_{1,2}.
\end{equation}
Again from the expression of $f(y),$ is clear that the orbit of every initial condition $y\in[-3,-1]\setminus\{p\}$  reaches $\mathcal{A}_1$ in a finite number of iterates. So $$\bigcup_{n=1}^\infty \mathcal{A}_n=\bigcup_{n=1}^\infty F^{-7(n-1)}(\mathcal{A}_1)=A\setminus \{p\}.$$

A similar result can be obtained by applying the same reasoning to the edges $B=F(A), C=F^2(A), \ldots,$ and $H=F^6(A)$.

Remember that $\overline{P_6R_1}\rightarrow \overline{P_7R_2}\twoheadrightarrow P_1$, hence the preimages of $P_1$ also fully cover these intervals and, therefore, $\lim\limits_{n \to \infty} \ell\left(F^{-n}(\overline{SR_2}) \cap \Gamma\right)=\ell(\Gamma)$, and the result follows.
\end{proof}

To be honest, however, we believe that the proof Proposition~\ref{p:propofull} is not easily extendable to cases with more complex graph geometries.  Nonetheless,  we have obtained the same result 
for the graphs and maps studied in next section, see Proposition \ref{p:x}.  The proof for these new cases is a direct consequence of a result in \cite{BMT}. 
However, and while being direct, this proof cannot be considered elementary, as the result itself does not have an elementary proof.

\subsection{Rest of the cases}\label{ss:lebesalphabeta}

In this section we prove  Theorem~\ref{t:teonou} for the cases not covered by Proposition~\ref{p:propofull}. As usual we can restrict our attention to the case $a=-1.$ More concretely, it suffices to prove next proposition:

\begin{propo}\label{p:x}
	When $a=-1,$ the set of preimages of the plateaus of the invariant graphs~$\Gamma$ associated with the maps $F|_{\Gamma}$ for 
	$b\in [-112/137,-13/16]\cup [603/874,563/816]$ 
	has full Lebesgue measure in~$\Gamma$. 
\end{propo}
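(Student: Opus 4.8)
The plan is to reduce both parameter windows to a single measure-theoretic property of the trapezoidal family of \cite{BMT}, and then transfer that property back to $\Gamma$ along the (semi)conjugacies already built in Sections~\ref{ss:alpha} and~\ref{ss:beta}. By the conjugation~\eqref{conj} we work with $a=-1$. For $b\in[-112/137,-13/16]$ we use \cite[Proposition 27(d)]{CGMM}: the segment $\Pi=\overline{R_7P_7}\subset\Gamma$ is invariant under $F^6$ and every point of $\Gamma$, apart from a repelling periodic orbit and its preimages, enters $\Pi$. For $b\in[603/874,563/816]$ we use \cite[Proposition 31(c)]{CGMM}: the segment $\Sigma=\overline{R_{15}R_8}\subset\Gamma$ is invariant under $F^7$ and is entered by every point of $\Gamma$ outside the preimages of three repelling periodic orbits. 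In each case the exceptional set is countable, hence Lebesgue-null; since $F$ is piecewise affine with nonzero slopes, its finitely many iterates preserve null sets, and so, partitioning the conull set $\{x:\text{the orbit eventually enters }\Pi\}$ by first-entry time and pulling back along each affine branch, it suffices to prove that \emph{inside} $\Pi$ (resp. $\Sigma$) the set of points whose $F^6$-orbit (resp. $F^7$-orbit) eventually meets the plateau has full one-dimensional measure.

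Next I would pass to the interval model. Writing $\Pi$ in the coordinate $x$ as in Section~\ref{ss:alpha}, the return map $F^6|_\Pi$ becomes $g_1(x)$, which is semiconjugated by a non-decreasing map $s$ to $g_2$, and $g_2$ is affinely conjugate to the trapezoidal map $T_{1/16,1/8,Z}$ of \cite{BMT}; the analogous chain, via Section~\ref{ss:beta}, reduces $F^7|_\Sigma$ to $k_2$, affinely conjugate to $T_{1/16,1/16,Z}$. Under these identifications the collapsing edge (the plateau) and its preimages correspond to the flat (constant) interval of the trapezoidal map and its preimages, so that ``the orbit eventually meets the plateau'' becomes ``the orbit eventually lands on the flat interval.''

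The heart of the matter is then a property of the trapezoidal family, and this is exactly where \cite{BMT} enters: for these maps the \emph{survivor set} of points whose forward orbit avoids the flat interval forever is Lebesgue-null, equivalently the set of preimages of the flat interval has full measure. This is the non-elementary ingredient: on the survivor set the map reduces to its two sloping branches, which are uniformly expanding (slopes of modulus strictly larger than one), and the vanishing of the survivor measure is an escape-rate phenomenon for open expanding systems that has no elementary proof. Granting this result, I would transfer it backward cleanly through the survivor set rather than through the whole basin. The affine conjugacies have nonzero slope and preserve null and conull sets. For the semiconjugacy the key observation is that the survivor set of $g_1$ \emph{avoids} the collapsed plateau interval (by definition it never meets the plateau, hence never meets the unique nondegenerate fiber of $s$); therefore $s$ restricted to it is injective and piecewise affine with bounded nonzero slopes, hence bi-Lipschitz, and it carries the survivor set of $g_1$ onto that of $g_2$. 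Since the latter is null by \cite{BMT}, so is the former, and combining this with the reduction of the first paragraph gives full measure on all of $\Gamma$.

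The step I expect to be the main obstacle, beyond quoting the \cite{BMT} full-measure statement, is precisely this bookkeeping at the semiconjugacy: because the plateau has \emph{positive} length, collapsing it is not measure-neutral a priori, and one must argue on the survivor set (which avoids the collapsed interval) rather than on the conull basin, so that the semiconjugacy is injective exactly where it is used. Once the survivor-set transfer is justified, everything else is routine null-set accounting for piecewise-affine maps with nonvanishing slopes.
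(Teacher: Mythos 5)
Your proposal is correct and follows essentially the same route as the paper's proof: reduce to the return maps on $\Pi$ and $\Sigma$ via the (semi)conjugacies of Sections~\ref{ss:alpha} and~\ref{ss:beta}, and invoke the \emph{Main fact} of \cite{BMT} that the preimages of the constancy interval of any trapezoidal map $T_{X,Y,Z}$ have full Lebesgue measure. The only difference is that you spell out the measure-theoretic bookkeeping (null exceptional sets, pullback along affine branches, and injectivity of the semiconjugacy on the survivor set) that the paper's much terser proof leaves implicit.
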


\begin{proof}
In the previous section we have showed that the maps $g_2(x)$ and $k_1(x),$ which condensate all the dynamic features of the map $F|_{\Gamma},$  are conjugate to the maps $g_4(x)$ and $k_2(x),$ that belong to the family of trapezoidal maps  $T_{X,Y,Z}$ studied in \cite{BMT}. More concretely:
\begin{itemize}

\item   $g_2(x)$ is conjugated to $g_4=T_{1/16,1/8,Z}$ that corresponds to 	$b\in [-112/137,-13/16];$  and

\item    $k_1(x)$ is conjugated to $k_2=T_{1/16,1/16,Z}$ that corresponds to $b\in[603/874,563/816],$
\end{itemize}
where we have omitted the explicit values of $Z=Z(b),$ for the sake of simplicity, and because they have already been given above.
 One of the main results in \cite{BMT}, which the authors refer to  as \emph{Main fact}, is that  \emph{the preimages of the interval of constancy of any trapezoidal map $g(x)=T_{X,Y,Z}$ has full Lebesgue measure throughout the entire interval $[0,1]$}.

The constancy intervals are the preimages under $F^{-7}$ of the plateaus in $\Pi$, in the first case, and the preimages under $F^{-6}$ of the plateaus in $\Sigma,$ in the second one.  Consequently, the preimages of the plateaus of the corresponding graphs $\Gamma$ have full Lebesgue measure on those graphs, as we wanted to prove.
\end{proof}

\subsection*{Acknowledgments} 
This work is supported by
Ministry of Science and Innovation--State Research Agency of the
Spanish Government through grants PID2022-136613NB-I00   and PID2023-146424NB-I00. It is also supported by the grants 2021-SGR-00113 
and 
2021-SGR-01039 from AGAUR of Generalitat de Catalunya.

\bibliographystyle{plain}

\begin{thebibliography}{10}

\bibitem{AKM}
R.~L. Adler, A.~G. Konheim, M.~H. McAndrew.
Topological entropy. {\em Trans. Amer. Math. Soc.}, 114 (1965), 309--319.


\bibitem{ABK21}
B.~Aiewcharoen, R.~Boonklurb, N.~Konglawan.
Global and local behavior of the system of piecewise linear difference equations $x_{n+1} = |x_n|-y_n-b$ and $y_{n+1} = x_n-|y_n|+1$ where $b \leq 4$. {\em Mathematics}, 9(12) (2021), 1390-27pp.

\bibitem{ABK} L.~Alsed\`a, S.~Baldwin, J.~Llibre, M.~Misiurewicz. Entropy of transitive tree maps. {\em Topology}, 36 (1997), 519--532.

\bibitem{AJM} L.~Alsed\`a, D.~Juher, P.~Mumbr\'u. A note on the periodic orbits and topological entropy of graph maps. {\em Proc. Amer. Math. Soc.}, 129 (2001), 2941--2946.

\bibitem{ALM}
Ll.~Alsed\`a, J.~Llibre, M.~Misiurewicz. {\em Combinatorial dynamics and entropy in dimension one}, volume~5 of {\em Advanced Series in Nonlinear Dynamics}. World Scientific Publishing Co., Inc., River Edge, NJ, second edition, 2000.




\bibitem{BG99} S.~Banerjee, C.~Grebogi. Border collision bifurcations in two-dimensional piecewise smooth maps. {\em Physical Review E}, 59 (1999), 4052--4061.



\bibitem{BGMY} L.~Block, J.~Guckenheimer, M.~Misiurewicz, L.~S.~Young. Periodic points and topological entropy of one-dimensional maps.
In {\em Global theory of dynamical systems ({P}roc. {I}nternat. {C}onf., {N}orthwestern {U}niv., {E}vanston, {I}ll., 1979)}, volume 819 of {\em Lecture Notes in Math.}, pp. 18--34, Berlin, 1980. Springer.

\bibitem{Bo3}
R.~Bowen. Entropy for group endomorphisms and homogeneous spaces. {\em Trans. Amer. Math. Soc.}, 153 (1971), 401--414.

\bibitem{BF}
R.~Bowen, J.~Franks. The periodic points of maps of the disk and the interval,
\emph{Topology}, 15 (1976), 337--342.

  
\bibitem{BMT}
K.M.~Brucks, M.~Misiurewicz, C.~Tresser.
Monotonicity Properties of the family of trapezoidal maps. {\em Comm. Math. Physics}, 137 (1991), 1--12.

\bibitem{BS21}
 I.~Bula, A.~S\={\i}le. About a system of piecewise linear difference equations with
many periodic solutions, in  S. Olaru et al. (eds.). {\em Difference Equations, Discrete
Dynamical Systems and Applications}, Springer Proceedings in Mathematics
\& Statistics, 444 (2024), 29--50.
 

\bibitem{BK} I.~Bula, J.L.~Kristone. On Periodic Solutions of a System of Piecewise Linear Difference Equations. To appear in {\em Proc. Internat. Conf. on Difference Equations and Applications}, Paris 2024.


\bibitem{CGMM} A.~Cima, A.~Gasull, V.~Ma\~{n}osa, F.~Ma\~{n}osas. Invariant graphs and dynamics of a family of continuous piecewise linear planar maps. {\em Qualitative Theory of Dynamical Systems} 24:70 (2025), 103 pp. 

\bibitem{Col} P. Collet, J.-P. Eckmann. Iterated maps of the interval as dynamical systems,
Birkhhäuser, Boston (1980).

\bibitem{D84}
R.~L. Devaney. A piecewise linear model for the zones of instability of an area-preserving map. {\em Physica D}, 10(3) (1984), 387--393.

\bibitem{GL}
E. A. Grove, G.~Ladas.
  {\em Periodicities in nonlinear difference equations}. 
  {Advances in Discrete Mathematics and Applications, 4}.
  Chapman \& Hall/CRC, Boca Raton, FL (2005).


\bibitem{GSA} L.~Gardini, I.~Sushko. Degenerate bifurcations and border collisions in piecewise smooth 1D and 2D maps. {\em Internat. J. Bifur. Chaos Appl. Sci. Engrg.}, 20 (2010), 2045--2070.

\bibitem{GT20} L.~Gardini, W.~Tikjha.  Dynamics in the transition case invertible/non-invertible in a 2{D} piecewise linear map. {\em Chaos, Solitons \& Fractals}, 137 (2020), 109813, 8.

\bibitem{GMS} I.~Ghosh, R.~I.~McLachlan, D.~J.~W.~Simpson. The bifurcation structure within robust chaos for two-dimensional piecewise-linear maps. {\em Commun. Nonlinear Sci. Numer. Simul.}, 134 (2024), 108025.

\bibitem{Kow} P.~Kowalczyk. Robust chaos and border-collision bifurcations in non-invertible piecewise-linear maps.  {\em Nonlinearity}, 18 (2005), 485--504.


\bibitem{Ll15} J.~Llibre. Brief survey on the topological entropy. {\em Discrete Contin. Dyn. Syst. Ser. B}, 20 (2015), 3363–3374.


\bibitem{LlM93} J.~Llibre, M.~Misiurewicz. Horseshoes, entropy and periods for graph maps. {\em Topology}, 32 (1993), 649–664.

\bibitem{L78}
{R.~Lozi.}
  Un attracteur \'etrange (?) du type attracteur de H\'enon. {\em J. Phys. Colloques}, 39 (1978), C5--9--C5--10.
  

\bibitem{MS1} M.~Misiurewicz, S.~\v{S}timac. Symbolic dynamics for Lozi maps. {\em Nonlinearity} 29 (2016), 3031--3046.

\bibitem{MS2} M.~Misiurewicz, S.~\v{S}timac. Lozi-like maps. {\em Discrete Contin. Dyn. Syst. Ser. A} 38 (2018), 2965--2985.

\bibitem{MZ}
M.~Misiurewicz, K.~Ziemian.
  Horseshoes and entropy for piecewise continuous piecewise monotone
  maps.
  In {\em From phase transitions to chaos}, pp. 489--500. World Sci.
  Publ., River Edge, NJ, 1992.
  
\bibitem{NY92} H.~E.~Nusse, J.~A.~Yorke. Border-collision bifurcations including ``period two to period three'' for piecewise smooth systems. {\em Physica D}, 57 (1992), 39--57.

\bibitem{NY95} H.~E.~Nusse, J.~A.~Yorke. Border-collision bifurcations for piecewise smooth one-dimensional maps. {\em Internat. J. Bifur. Chaos Appl. Sci. Engrg.}, 5 (1995), 189--207.

\bibitem{PB} S.~Parui and S.~Banerjee. Border collision bifurcations at the change of state-space dimension. \emph{Chaos}, 12 (2002), 1054--1069.
  
\bibitem{TLL13}
W.~Tikjha, Y.~Lenbury, E.~Lapierre.
  Equilibriums and periodic solutions of related systems of piecewise
  linear difference equations.
  {\em Int. J. Math. Comput. Sim.}, (2013), 323--335.

\bibitem{TLS17}
W.~Tikjha, E.~Lapierre, T.~Sitthiwirattham.
The stable equilibrium of a system of piecewise linear difference equations.
{\em Adv. Difference Equ.}, (2017), Paper No. 67, 10pp.

\bibitem{TG20}
W.~Tikjha, L.~Gardini.
  Bifurcation sequences and multistability in a two-dimensional
  piecewise linear map.
  {\em Internat. J. Bifur. Chaos Appl. Sci. Engrg.}, 30(6) (2020), 2030014-21pp.

\end{thebibliography}

\end{document}